\begin{document}

\theoremstyle{plain}

\newtheorem{theorem}{Theorem}
\newtheorem{lemma}{Lemma}
\newtheorem{cor}{Corollary}
\newtheorem{prop}{Proposition}
\newtheorem{req}{Remark}
\newtheorem{definition}{Definition}
\newtheorem{hyp}{Hypothesis}
\newtheorem{question}{Question}
\newtheorem{open question}{Open question}
\newtheorem{conjecture}{Conjecture}

\setlength{\marginparwidth}{1in}

\newenvironment{Romain}
    {\begin{center}
     \begin{tabular}{|p{0.95\textwidth}|}\hline\color{ForestGreen}}
    { \vspace{5pt}\color{black}\\\hline\end{tabular} 
    \end{center}}

\title{Propagation properties of reaction-diffusion equations in periodic domains}
\makeatletter{\renewcommand*{\@makefnmark}{}\footnotetext{email : romain.ducasse@dauphine.fr}\makeatother}

\author[*]{Romain {\sc Ducasse}}

\affil[*]{Ecole des Hautes Etudes en Sciences Sociales, 
		 PSL Research University,  
		Centre d'Analyse et Math\'ematiques Sociales,
		 54 boulevard Raspail 
		75006 Paris, France}
		
\date{}

\maketitle

\noindent {\textbf{Keywords:} }Propagation, spreading, reaction-diffusion equations, heat kernel, domains with holes.
\\
\noindent {\textbf{MSC:} } 35A08, 35B30, 35K05, 35K57, 35B40

\begin{abstract}
This paper studies the phenomenon of \emph{invasion} for heterogeneous reaction-diffusion equations in periodic domains with monostable and combustion reaction terms. We give an answer to a question rised by Berestycki, Hamel and Nadirashvili in \cite{BHN1} concerning the connection between the speed of invasion and the speed of fronts. To do so, we extend the classical Freidlin-Gartner formula to such equations, using a geometrical argument devised by Rossi in \cite{R1}, and derive some bounds on the speed of fronts using estimates on the heat kernel.
\end{abstract}

\section{Introduction and results}

\subsection{Introduction}

This paper deals with the spreading properties of the following reaction-diffusion equation:
\begin{equation}\label{eqgen}
\left\{
\begin{array}{rrll}
\partial_{t}u &= &\text{div} (A(x)\nabla u ) + q(x)\cdot\nabla u +f(x,u), &\quad t>0, \ x\in \Omega, \\
\nu \cdot A(x)\nabla u &=& 0,  &\quad t>0, \ x\in \partial \Omega.
\end{array}
\right.
\end{equation}
In the whole paper, the domain $\Omega$ and the coefficients are assumed to be periodic. Here, $\nu$ stands for the exterior normal. Reaction-diffusion equations arise in the study of various phenomena in biology (propagation of genes, epidemics), physics (combustion), and more recently in social sciences (rioting models). A particular emphasis is given here to the case where the equation is homogeneous but the domain is not the whole space:

\begin{equation*}
\left\{
\begin{array}{rrll}
\partial_{t}u &= &\Delta u  +f(u), &\quad t>0 , \ x\in \Omega, \\
\partial_{\nu} u &=& 0,  &\quad t>0 , \ x\in \partial \Omega.
\end{array}
\right.
\end{equation*}
In such case, we provide an answer to a question asked by Berestycki, Hamel and Nadirashvili in \cite{BHN1} concerning the relation between the speed of invasion and the speed of fronts for this problem.

Reaction-diffusion equations were extensively studied since the seminal paper of Kolmogorov, Petrovski and Piskunov \cite{KPP}. There, the authors dealt the homogeneous equation

\begin{equation}\label{KPP}
\partial_{t}u = \Delta u + f(u), \quad t>0 ,\ x\in \mathbb{R}^{N},
\end{equation}
with $f(u) = u(1-u)$.
The results of \cite{KPP} have been extended by Aronson and Weinberger in \cite{AW} to more general \emph{reaction terms} $f$. The basic assumption is that $f(0)=f(1)=0$, so that the constant states $u\equiv 0$ and $u\equiv 1$ are stationary solutions. We shall pay a particular attention to the two following types of nonlinearities:
\begin{labeling}{\quad monostable\quad}
  \item[\quad {\em monostable}\quad] 
  $f>0\quad\text{in }(0,1)$;
  
  \item[\quad {\em combustion}] 
  $\exists\theta\in(0,1),\quad 
f=0\quad\text{in }[0,\theta],\quad f>0\quad\text{in }(\theta,1)$.

\end{labeling}
 These two notions extend to the case where $f$ can depend on $x$, see Definition \ref{def f} below. Two important features of reaction-diffusion equations have been derived in \cite{AW}. First, equation \eqref{KPP} admits particular solutions called \emph{traveling fronts}. These are positive entire (i.e., defined for all $t\in \mathbb{R}$) solutions of the form $u(t,x) = \phi ( x\cdot e -ct)$, for some $e\in \mathbb{S}^{N-1}$, $c \in \mathbb{R}$, $\phi$ decreasing and satisfying $\phi(s) \to 1$ as $s \to -\infty$ and $\phi(s) \to 0$ as $s \to +\infty$. The unit vector $e$ is the \emph{direction of propagation}, $c$ is the \emph{speed of propagation} and $\phi$ is the \emph{profile} of the traveling front. More specifically, there exists a quantity $c^{\star}$ such that there are fronts with speed $c$, for every $c \geq c^{\star}$ if $f$ is of the \emph{monostable} type, whereas  there are traveling fronts only with speed $c =c^{\star}$ if $f$ is of the \emph{combustion} type. Of course, the homogeneity of equation \eqref{KPP} implies that the quantity $c^{\star}$ does not depend on the direction of the fronts $e$. We mention that, if $f$ is of the \emph{KPP} type (i.e., if it is monostable and satisfies $f^{\prime}(0)>0$ and $f(u)\leq f^{\prime}(0)u$, for $u \in [0,1]$), then it is proven in \cite{KPP} that $c^{\star} = 2\sqrt{f^{\prime}(0)}$. The quantity $c^{\star}$ is called the \emph{critical (or minimal) speed of fronts}. We consider this quantity in a more general context in Section \ref{sectionC}.

The second important feature is the property of \emph{invasion}. If $u(t,x)$ is the solution of \eqref{KPP} emerging from a non-negative compactly supported initial datum $u_{0}$, does it converge to $1$ as $t$ goes to $+\infty$ ? If this convergence holds (locally uniformly in $x$), we say that invasion occurs for the initial datum $u_{0}$. Of course, this depends on the nonlinearity $f$. For instance, if $f$ is of the combustion type, and if the compactly supported non-negative initial datum verifies $u_{0} \leq \theta$, then the problem \eqref{KPP} boils down to the heat equation, and then $u(t,x) \to 0$ as $t\to +\infty$ uniformly in $x$. However, it is shown in \cite{AW} that, for every $\eta \in (\theta , 1)$, there is $R>0$ such that any initial datum such that $u_{0}(x)\geq \eta \mathbbm{1}_{B_{R}}$ (where $B_{R}$ is the ball of center $0$ and of radius $R$) satisfies the invasion property. In contrast, if $f$ is of the KPP type, then invasion occurs for any non-negative non-zero initial datum.

Once we know that invasion occurs for some initial data, we can define the \emph{speed of invasion}. We say that $w(e) > 0$ is the speed of invasion for \eqref{KPP} in the direction $e \in \mathbb{S}^{N-1}$ if, for any solution $u(t,x)$ of \eqref{KPP}  emerging from a compactly supported non-negative initial datum which converges to $1$ as $t$ goes to $+\infty$, locally uniformly in $x$, the following holds:

\begin{equation*}
\begin{array}{llc}
\forall c>w(e), \quad &u(t,x+cte) \to 0 \quad &\text{as} \quad t \to +\infty, \\
\forall  c\in [0,w(e)), \quad &u(t,x+cte) \to 1 \quad &\text{as} \quad t \to +\infty,
\end{array}
\end{equation*}
locally uniformly in $x\in \mathbb{R}^{N}$. Again, the homogeneity of equation \eqref{KPP} yields that the speed of invasion is actually independent of the direction $e$. Moreover, if $f$ is of the KPP type, it is proven in \cite{KPP} that $w(e) = 2\sqrt{f^{\prime}(0)}, \ \forall e \in \mathbb{S}^{N-1}$. Hence, in this case $c^{\star} \equiv w$.

One of the main motivation behind the present paper is to understand the connections between the speed of fronts $c^{\star}$ and the speed of invasion $w$ in a more general context. In order to state our main results, we first present how the notions of fronts and invasion extend to the case of spatially periodic heterogeneous equations.

\subsection{Pulsating traveling fronts}\label{sectionC}

Berestycki and Hamel extend in \cite{BH1} the notion of traveling fronts to the more general framework of equation \eqref{eqgen}. Throughout the whole paper, we assume that $A,q,f,\Omega$ are periodic, with the same period, i.e, there are $L_1, \dots , L_N >0$ such that 
$$
\forall k \in \prod_{i=1}^{N}L_i \mathbb{Z}, \quad \Omega + \left\{k\right\} = \Omega,
$$
and 
$$
\forall k \in \prod_{i=1}^{N}L_i \mathbb{Z},\quad  f(\cdot+k,\cdot) = f,\quad q(\cdot+k)=q,\quad A(\cdot+k) = A.
$$
We shall denote $\mathcal{C} := \prod_{i=1}^{N}[0,L_i)$ the periodicity cell. Typical examples of such domains $\Omega$ are domains with ``holes" : if $K\subset \mathbb{R}^{N}$ is a smooth compact set, we can define the periodic domain $\Omega := \left(K +L \mathbb{Z}^{N}\right)^{c}$, with $L>0$ large enough so that the resulting domain is smooth and connected. This domain can be seen as the whole space with $K$-shaped ``holes" periodically  distributed.

To simplify the notations, unless otherwise stated, we shall assume that the period is $1$, i.e., $L_1=\ldots=L_N =1$. In order to apply the results of  \cite{BH1}, we make the following assumptions on the domain:

\begin{equation}\label{regomega}
\Omega \text{ is a periodic, connected open subset of } \mathbb{R}^{N} \text{of class } C^{3},
\end{equation}
and the following hypotheses on the coefficients:
\begin{equation}\label{regularity}
\left\{
\begin{array}{llc}
A \in C^{3}(\overline{\Omega}) \text{ is symmetric and uniformly elliptic and periodic},\\
q\in C^{1,\alpha}(\overline{\Omega}) \text{ for some } \alpha \in (0,1), \ \text{div }q =0 , \ \int_{\mathcal{C}\cap\Omega } q =0,\ q \text{ is periodic}, \\
f : \overline{\Omega}\times \left[ 0,1\right] \mapsto \mathbb{R} \text{ is of class } C^{1,\alpha} \text{ for some }\alpha \in (0,1).
\end{array}
\right.
\end{equation}
We also assume that the nonlinearity $f$ satisfies the following

\begin{equation}\label{hypf}
\left\{
\begin{array}{llc}
\forall x \in \Omega, \quad f(x,0)=f(x,1)=0, \\
\exists S \in (0,1), \ \forall x \in \overline{\Omega}, \quad f(x,\cdot) \text{ is nonincreasing in } \left[ S,1 \right], \\ 
\forall s \in (0,1) , \quad f(\cdot,s) \text{ is periodic}.
\end{array}
\right.
\end{equation}

By analogy with the homogeneous case $f=f(u)$, we define monostable, KPP and combustion nonlinearities $f(x,u)$:

\begin{definition}\label{def f}
We say that $f$ is of the \emph{monostable} type if
\begin{equation}\label{monostable}
\forall s \in (0,1),\quad \min_{x\in \overline{\Omega}} f(x,s)\geq 0, \quad \max_{x\in \overline{\Omega}}f(x,s)>0.
\end{equation}
Among monostable nonlinearities, there is the special class of \emph{KPP} nonlinearities. In addition to being monostable, they satisfy

\begin{equation}\label{defkpp}
\forall x \in \overline{\Omega},\ \forall s \in [0,1], \quad f(x,s) \leq \partial_{s}f(x,0)s.
\end{equation}
We say that $f$ is of the \emph{combustion} type if
\begin{equation}\label{combustion}
\left\{
\begin{array}{lc}
\exists \theta \in (0,1), \ \forall (x,s) \in \Omega \times \left[ 0,\theta \right], \ f(x,s)=0, \\
\forall s \in (\theta,1), \ \min_{x\in \overline{\Omega}}f(x,s)\geq 0, \  \max_{x\in \overline{\Omega}}f(x,s)>0.
\end{array}
\right.
\end{equation}
 
\end{definition}
The important difference between combustion and monostable nonlinearities (from which stems the non-uniqueness of speeds of fronts for monostable equation) is that, when $f$ is of the combustion type,

\begin{equation}\label{decroissance}
\exists \theta \in (0,1],\ \forall x \in \Omega, \quad f(x,\cdot) \text{ is nonincreasing in } \left[ 0,\theta \right].
\end{equation}

In the periodic framework, the notion of traveling fronts can be generalized by \emph{pulsating traveling fronts}.

\begin{definition}\label{defPTF}
A pulsating traveling front in the direction $e \in \mathbb{S}^{N-1}$ of speed $c \in \mathbb{R}\backslash\{0\}$ connecting $1$ to $0$ is an entire (i.e., defined for all $t\in \mathbb{R}$) solution $v$ of \eqref{eqgen} satisfying

\begin{equation*}
\left\{
\begin{array}{llc}
\forall k \in \mathbb{Z}^{N}, \ \forall x \in \Omega, \quad v(t+\frac{k\cdot e}{c},x) = v(t,x-k), \\
v(t,x) \to 1 \, \text{ as } x \cdot e \to -\infty, \quad v(t,x) \to 0 \, \text{ as } x\cdot e \to +\infty. \\
\end{array}
\right.
\end{equation*}
\end{definition}
Such fronts are known to exist in several situations. For instance, it is proven in \cite{BH1} that, under hypotheses \eqref{regularity}-\eqref{hypf}, for every $e\in \mathbb{S}^{N-1}$, there is $c^{\star}(e)>0$, called the \emph{critical (or minimal) speed of fronts} in direction $e$, such that pulsating traveling fronts in the direction $e$ with speed $c$ exists if, and only if, $c\geq c^{\star}(e)$ when $f$ is of the monostable type \eqref{monostable} or  only if $c=  c^{\star}(e)$ when $f$ is of the combustion type \eqref{combustion}, see   \cite[Theorems 1.13 - 1.14]{BH1}.

\subsection{The speed of invasion}

The results of Kolmogorov, Petrovski, Piskunov \cite{KPP} and Aronson and Weinberger \cite{AW} concerning the invasion have also been extended to a more general framework than the homogeneous one. First, consider the equation

\begin{equation}\label{eqFG}
\partial_{t}u = \text{div} (A(x)\nabla u ) + q(x)\cdot\nabla u +f(x,u), \quad t>0 , \ x\in \mathbb{R}^{N}.
\end{equation}
Then, one can define the speed of invasion $w$ as a function from the unit sphere $\mathbb{S}^{N-1}$ to $\mathbb{R}^{+}$ such that, for every $u$ solution of \eqref{eqgen} arising from a compactly supported non-negative initial datum which converges to $1$ as $t$ goes to $+\infty$, locally uniformly in $x \in\mathbb{R}^{N}$, we have, for $e\in \mathbb{S}^{N-1}$:

\begin{equation*}
\begin{array}{llc}
\forall c>w(e), \quad &u(t,x+cte) \to 0 \quad &\text{as} \quad t \to +\infty, \\
\forall  c\in [0,w(e)), \quad &u(t,x+cte) \to 1 \quad &\text{as} \quad t \to +\infty,
\end{array}
\end{equation*}
locally uniformly in $x\in \mathbb{R}^{N}$.

Using probabilistic techniques, Freidlin and Gartner show in \cite{FG} the existence of a speed of invasion for the equation \eqref{eqFG} when $f$ is of the KPP type \eqref{defkpp} and $A,q,f$ are $x-$periodic. They show that invasion occurs for every non-negative non-null compactly supported initial datum and prove what is now known as the \emph{Freidlin-Gartner formula} :

\begin{equation}\label{FGoriginal}
w(e):=\min_{\substack{\xi \in \mathbb{R}^{N} \\ e \cdot \xi >0}} \frac{k(\xi)}{e\cdot \xi},
\end{equation}
where $k(\xi)$ is the periodic principal eigenvalue of the operator
$$
L_{\xi}u := \text{div}(A\nabla u)-2\xi\cdot A\nabla u +q\cdot \xi u +(-\text{div}(A\xi)-q\cdot \xi +\xi\cdot A \xi + \partial_{u}f(x,0))u.
$$

This formula is also proved by Berestycki, Hamel and Nadin in \cite{BHN} using a PDE approach. Similar properties of spreading for heterogeneous reaction-diffusion equations are studied with other approaches : the viscosity solution/singular perturbation method is adopted by Evans and Souganidis in \cite{ES} and Barles, Soner and Souganidis in \cite{BSS}. Weinberger uses an abstract discrete system approach in \cite{W}.

Berestycki, Hamel and Nadirashvili show in \cite{BHN1} that, if one considers KPP nonlinearities, the quantity $c^{\star}(e) := \min_{\lambda>0}\frac{k(\lambda e)}{\lambda}$ (where $k$ is the principal eigenvalue introduced before and $e\in \mathbb{S}^{N-1}$) coincides with the critical speed of \emph{pulsating traveling fronts} in the direction $e$ for equation \eqref{eqFG} (if the equation were set on a periodic domain $\Omega$ instead of $\mathbb{R}^{N}$, this relation still holds true with $k$ being the periodic principal eigenvalue of the same operator but with the additional boundary condition $\nu \cdot A\nabla u = \lambda (\nu \cdot e) u$ on $\partial \Omega$, see \cite{BHN1} for the details). 
Consequently, in the KPP case, the Freidlin-Gartner formula \eqref{FGoriginal} can be rewritten as
\begin{equation}\label{FGformula}
w(e) = \min_{e\cdot \xi >0}\frac{c^{\star}(\xi)}{e\cdot \xi}.
\end{equation}
The fact that pulsating traveling fronts exist not only in the KPP case but also for other reaction terms, and hence that the formula \eqref{FGformula} could make sense in more general frameworks than the KPP one, led Rossi to extend in \cite{R1} the Freidlin-Gartner formula to much more general equations in the whole space (essentially, all those for which pulsating traveling fronts are known to exist).

In this paper, we deal with invasion in domains $\Omega$ that are not necessarily $\mathbb{R}^{N}$. In this case, it is convenient to introduce the  notion of \emph{asymptotic set of spreading}.

\begin{definition}\label{as}
Let $\mathcal{W}\subset \mathbb{R}^{N}$ be a closed set coinciding with the closure of its interior. We say that $\mathcal{W}$ is the asymptotic set of spreading for a reaction-diffusion equation if, for any bounded solution $u(t,x)$ emerging from a non-negative compactly supported initial datum such that $u(t,x)\to1$ as $t\to +\infty$, locally uniformly in $x \in \overline{\Omega}$, we have:
\begin{equation}\label{subset}
\forall K \text{ compact}, \ K\subset int(\mathcal{W}), \quad \inf_{x\in tK} u(t,x) \to 1 \quad \text{as } \, t\to +\infty,
\end{equation}
\begin{equation}\label{superset}
\forall C \text{ closed}, \ C \cap \mathcal{W} = \emptyset, \quad \sup_{x\in tC} u(t,x) \to 0 \quad \text{as } \, t\to +\infty.
\end{equation}
If only \eqref{subset}, respectively \eqref{superset}, holds, $\mathcal{W}$ is said to be an asymptotic subset, respectively superset, of spreading.
\end{definition}
The asymptotic set of spreading relates to the notion of speed of invasion previously described. Indeed, assume that $\mathcal{W}$ is an asymptotic set of spreading and that we can write $\mathcal{W} = \left\{ r \xi \, : \, \xi \in \mathbb{S}^{N-1} , \, 0\leq r \leq w(\xi) \right\}$ with $w$ a continuous function. Then, if $\Omega =  \mathbb{R}^{N}$, $w(e)$ is the speed of spreading in the direction $e$, as defined before. For example, if $f$ is a KPP nonlinearity independent of $x$, then the asymptotic set of spreading associated with the homogeneous equation \eqref{KPP} is the ball of center $0$ and of radius $2\sqrt{f^{\prime}(0)}$. We emphasize that this is somewhat stronger than saying that the invasion speed is $2\sqrt{f^{\prime}(0)}$.

Observe that, for the definition of the asymptotic set of spreading to be meaningful, it is necessary that there are compactly supported initial datum $u_{0}$ for which the invasion property holds. Rossi and the author give in \cite{DR} sufficient conditions to have invasion for equation \eqref{eqgen}. In particular, we show there that, if $f$ is of the monostable or combustion type, in the sense of Definition \ref{def f}, and if the drift term $q$ is ``not too large" (see \cite{DR} for the details), then, denoting
$$
\theta := \max \left\{ s \in [0,1)  \ :\  \exists x\in\overline{\Omega},\
	f(x,s)=0    \right\},
$$
we have that, for all $\eta\in(\theta,1)$, there is $r>0$ such that any solution of \eqref{eqgen} with an initial datum $u_0$ satisfying 
$$
u_0>\eta\ \text{ in }\Omega\cap B_r,
$$
converges to $1$ as $t$ goes to $+\infty$, locally uniformly in $x \in \overline{\Omega}$.

\subsection{Statement of the main results}

One of the main motivation behind the present paper is to answer the following question, raised by Berestycki, Hamel and Nadirashvili in \cite{BHN1}: 

\begin{question}\label{question}
Consider the homogeneous equation set on a periodic domain $\Omega$

\begin{equation}\label{PeriodicFKPP}
\left\{
\begin{array}{rrll}
\partial_{t}u - \Delta u &=&  f(u), &\quad t>0 , \ x\in \Omega, \\
\partial_{\nu} u &=& 0,  &\quad t>0 ,\ x\in \partial \Omega.
\end{array}
\right.
\end{equation}
Are there domains $\Omega$ such that $c^{\star} \not\equiv w$ ?
\end{question}
We recall that $c^{\star}$ is the critical speed of pulsating traveling fronts and $w$ is the speed of invasion. Originally, this question was asked for $f$ of KPP type \eqref{defkpp}, but it also makes sense if $f$ is a monostable \eqref{monostable} or a combustion \eqref{combustion} nonlinearity.

The speed of invasion and the critical speed of fronts for the homogeneous problem \eqref{KPP} in the whole space are actually independent of the direction thanks to the homogeneity of the equation and of the domain, and are everywhere equal : $c^{\star} \equiv w$. In the KPP case, as we mentioned earlier, both are equal to $2\sqrt{f^{\prime}(0)}$. 
If the domain $\Omega$ is periodic, the speeds $c^{\star}(e)$ and $w(e)$ can depend on the direction $e$.

We mention that, if one considers the equation with general coefficients \eqref{eqgen}, it is possible to have $c^{\star}\not\equiv w$ even if $\Omega = \mathbb{R}^{N}$. The first example one can check is, in dimension $2$, when the Laplace operator is replaced by $a\partial_{xx}^{2} + b\partial_{yy}^{2}$ with $a,b>0$. In this case, the exact values of $c^{\star}$ and $w$ can be computed, see \cite[Remark 1.12]{BHN1}, and one can observe that, if $a\neq b$, then $c^{\star}\not\equiv w$. This fact can also be deduced from our Proposition \ref{prop} below.  What was not known is whether the geometry of the domain alone could entail that $c^{\star}\not\equiv w$. We prove that this is the case.

\begin{theorem}\label{thwc}

Let $f$ be a monostable \eqref{monostable} or a combustion \eqref{combustion} nonlinearity independent of $x$. There are smooth periodic domains $\Omega$ such that the critical speed of pulsating traveling fronts $c^{\star}$ and the invasion speed $w$ for equation \eqref{PeriodicFKPP} do not coincide in every direction.

\end{theorem}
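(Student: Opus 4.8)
\noindent\emph{Proof strategy.} The plan is to deduce Theorem~\ref{thwc} from the extension of the Freidlin--G\"artner formula to periodic domains obtained above, together with the heat-kernel bounds on the critical speed of fronts. By that formula, for \eqref{PeriodicFKPP} on any periodic domain (and for monostable or combustion $f$) the speed of invasion satisfies, for every unit vector $e$,
\[
w(e)=\min_{\substack{\xi\in\mathbb{S}^{N-1}\\ e\cdot\xi>0}}\frac{c^\star(\xi)}{e\cdot\xi}\ \le\ c^\star(e),
\]
the last inequality coming from the choice $\xi=e$. Hence $c^\star\not\equiv w$ will follow as soon as we exhibit a smooth periodic domain $\Omega$ and two unit vectors $e_1,\xi_0$ with $e_1\cdot\xi_0>0$ and $c^\star(\xi_0)<(e_1\cdot\xi_0)\,c^\star(e_1)$, since then $w(e_1)\le c^\star(\xi_0)/(e_1\cdot\xi_0)<c^\star(e_1)$. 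So the point is to build a domain which is ``transparent'' along one direction and strongly ``obstructing'' along a nearby one.

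It suffices to do this for $N=2$: from a planar example $\Omega\subset\mathbb{R}^2$ one obtains an admissible $N$-dimensional example $\Omega\times\mathbb{R}^{N-2}$ with the same directional front speeds in the first two coordinates. In $\mathbb{R}^2$, for a small parameter $\eta>0$ I would take the $1$-periodic domain
\[
\Omega_\eta:=\mathbb{R}^2\setminus\bigcup_{k\in\mathbb{Z}}\Bigl\{(x,y):\ \bigl|y-(k+\tfrac12)\bigr|\le a,\ \operatorname{dist}(x,\mathbb{Z})\ge\eta\Bigr\},
\]
with $0<a<\tfrac12$ fixed and the corners then rounded off so that $\Omega_\eta$ is connected and of class $C^3$. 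This $\Omega_\eta$ contains the fixed straight horizontal strip $S:=\mathbb{R}\times(-\tfrac12+a,\tfrac12-a)$ (independent of $\eta$), whereas any path joining two consecutive such strips must cross one of the ``necks'' of width $\sim\eta$ located near the integer abscissae; as $\eta\downarrow0$, $\Omega_\eta$ degenerates into a disconnected union of horizontal strips. Set $e_1:=(1,0)$ and $\xi_0:=(1,1)/\sqrt2$, so $e_1\cdot\xi_0=1/\sqrt2$.

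Two estimates would then close the argument. First, $c^\star_{\Omega_\eta}(e_1)\ge c_0$ for a constant $c_0>0$ independent of $\eta$: the planar (i.e.\ $y$-independent) one-dimensional front for $f$ lives inside the fixed channel $S\subset\Omega_\eta$, and a sub-solution/comparison argument of the kind used for straight cylinders (cf.\ \cite{BH1}, \cite{DR}) bounds the speed of \eqref{PeriodicFKPP} in the direction $e_1$ from below by its speed, uniformly in $\eta$. Second, $c^\star_{\Omega_\eta}(\xi_0)\to0$ as $\eta\downarrow0$: this is exactly where the heat-kernel bounds on $c^\star$ come in. Advancing by one period in the direction $\xi_0$ forces the crossing of a neck, and a bottleneck estimate for the Neumann heat kernel on the periodicity cell shows that passing through a neck of width $\eta$ costs a time $\gtrsim1/\eta$; hence the geometric quantity controlling the front speed along $\xi_0$---the minimum over $\lambda>0$ of $k_{\Omega_\eta}(\lambda\xi_0)/\lambda$, equivalently the effective conductance of $\Omega_\eta$ in that direction---tends to $0$, and the heat-kernel upper bound yields $c^\star_{\Omega_\eta}(\xi_0)\to0$. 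Combining, for $\eta$ small we get $c^\star_{\Omega_\eta}(\xi_0)<\tfrac1{\sqrt2}c_0\le (e_1\cdot\xi_0)\,c^\star_{\Omega_\eta}(e_1)$, hence $w(e_1)<c^\star_{\Omega_\eta}(e_1)$, which proves the theorem with $\Omega=\Omega_\eta$. (For combustion $f$ a neck that is too thin quenches the reaction; one then keeps the necks at a fixed super-critical width but lets their \emph{length} grow inside the cell---a long winding channel---so that the transit time through the obstructions, and hence $c^\star_{\Omega_\eta}(\xi_0)$, still tends to $0$ by the same bound.)

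The reduction via the Freidlin--G\"artner formula is elementary and the lower bound $c^\star_{\Omega_\eta}(e_1)\ge c_0$ is a soft cylinder comparison; \emph{the main obstacle} is the upper bound $c^\star_{\Omega_\eta}(\xi_0)\to0$, that is, converting the geometric ``bottleneck'' into a quantitative decay of the Neumann heat kernel on $\Omega_\eta$ and then into an upper bound for the directional front speed---precisely the purpose of the heat-kernel estimates on the speed of fronts---with the combustion case additionally requiring one to keep the obstacles wide enough not to quench the front while still making their transit time arbitrarily large.
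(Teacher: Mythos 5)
Your overall architecture (Freidlin--G\"artner reduction $+$ a lower bound on the speed along a free corridor $+$ an upper bound on the speed in an obstructed direction) matches the paper's, but the crux --- the upper bound $c^\star_{\Omega_\eta}(\xi_0)\to 0$ --- is asserted rather than proved, and the tool you invoke does not deliver it for your geometry. The paper's heat-kernel estimate enters only through Proposition \ref{geo}, which bounds the \emph{invasion speed} $w$ (not $c^\star$) by $2C_\Omega(e)\sqrt{\max_u f(u)/u}$, where $C_\Omega(e)$ in \eqref{C geo} is measured by the \emph{geodesic distance} $d_\Omega$. In your domain $\Omega_\eta$ the necks are thin but short: a path can reach $\lambda\xi_0$ by zigzagging through one neck per period at an extra cost bounded uniformly in $\eta$, so $d_{\Omega_\eta}(0,\lambda\xi_0)$ stays comparable to $\lambda$, $C_{\Omega_\eta}(\xi_0)$ does not degenerate as $\eta\to 0$, and Proposition \ref{geo} gives nothing. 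The ``bottleneck estimate: a neck of width $\eta$ costs a time $\gtrsim 1/\eta$'' is not established anywhere and is not how the linear bound works: for KPP the comparison function grows like $e^{f'(0)t}$, so a small kernel prefactor acquired through a neck is recovered in a time that is only logarithmic in that prefactor, and for general monostable (non-KPP) $f$ the eigenvalue/conductance characterization $\min_\lambda k(\lambda\xi_0)/\lambda$ you appeal to does not even characterize $c^\star$. Finally, your reduction specifically requires an upper bound on $c^\star(\xi_0)$, which is strictly stronger than an upper bound on $w(\xi_0)$ (since $w\le c^\star$ by \eqref{FGformula}); the paper has no mechanism for bounding $c^\star$ from above by the geometry.

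The paper avoids this by first proving the rigidity result (Proposition \ref{prop}: if $c^\star\equiv w$ then both are constant), so it suffices to make $w$ non-constant; it then makes $w$ small in one direction not by thinning passages but by \emph{lengthening} them --- long, nearly horizontal slanted channels of fixed width (Proposition \ref{dim2}), so that $d_\Omega$ in the vertical direction is $\sim 1/\alpha$ times the Euclidean distance and Proposition \ref{geo} applies --- and bounds $w(e_x)$ (not merely $c^\star(e_x)$) from below by a compactly supported traveling bump subsolution (Lemma \ref{tech}) whose support slides along a free corridor without touching $\partial\Omega$; this is why the combustion/monostable domains are $L$-periodic with $L$ large, and it also repairs a secondary gap in your lower bound, since the $x$-only planar front is not a sub- or supersolution in $\Omega_\eta$ (Neumann fails on the vertical neck walls) and a lower bound on $c^\star(e_1)$ alone does not yield one on $w(e_1)$. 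Your parenthetical fix for combustion (keep the passages wide, make them long) is in essence the paper's actual construction --- but it must then be run with the geodesic bound on $w$ plus Proposition \ref{prop}, not with an unproved neck-transit-time bound on $c^\star$. The monostable non-KPP case is then handled in the paper by sandwiching $f$ between a combustion and a KPP nonlinearity, another step your argument would need.
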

This provides a positive answer to Question \ref{question}.  When the nonlinearity $f$ is of the monostable or combustion type, then the domains we exhibit are $L$-periodic, with $L$ large enough. If $f$ is a KPP nonlinearity, then we can build domains with any periodicity.

As a preliminary step to prove Theorem \ref{thwc}, we show that the Freidlin-Gartner formula \eqref{FGformula} holds true for the general equation \eqref{eqgen} in the periodic domain $\Omega$, extending then Rossi's result to the case where the domain is not $\mathbb{R}^{N}$ anymore. This is done in Section \ref{sectionFG}. More precisely, we prove the following:

\begin{theorem}\label{FG}
Let $A , q ,f ,\Omega$ be periodic, satisfying \eqref{regularity}-\eqref{hypf}. Assume that $f$ is a monostable \eqref{monostable} or a combustion \eqref{combustion} nonlinearity. Then, equation \eqref{eqgen} has the following asymptotic set of spreading:
\begin{equation}\label{W}
\mathcal{W} = \left\{ r \xi \, : \, \xi \in \mathbb{S}^{N-1} , \, 0\leq r \leq w(\xi) \right\},
\end{equation}
where $w(\xi):= \inf_{e\cdot \xi >0}\frac{c^{\star}(e)}{e\cdot \xi}$, and $c^{\star}(e)$ is the critical speed of pulsating traveling fronts in the direction $e$.
\end{theorem}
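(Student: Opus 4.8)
The plan is to verify that $\mathcal{W}$ is simultaneously an asymptotic superset and an asymptotic subset of spreading for \eqref{eqgen} — that is, that \eqref{superset} and \eqref{subset} hold — following the geometric scheme devised by Rossi in \cite{R1} for the case $\Omega=\mathbb{R}^{N}$. Rossi's argument shows, in essence, that the Freidlin--G\"artner formula holds as soon as pulsating traveling fronts exist in every direction and invasion occurs from compactly supported data; in our setting these two inputs are \cite[Theorems~1.13--1.14]{BH1} and \cite{DR} respectively, so the real task is to transplant the argument to the Neumann problem on a periodic domain $\Omega\ne\mathbb{R}^{N}$. Two facts will make this possible: the pulsating fronts of \cite{BH1} are \emph{exact} solutions of \eqref{eqgen}, boundary condition included, hence legitimate sub- and supersolutions; and the microscopic structure of $\Omega$ enters only through the map $e\mapsto c^{\star}(e)$, the macroscopic spreading shape $\mathcal{W}\subset\mathbb{R}^{N}$ being then governed by these speeds exactly as when $\Omega=\mathbb{R}^{N}$. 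First I would record that $e\mapsto c^{\star}(e)$ is continuous and bounded between two positive constants (continuity and positivity from \cite{BH1}, the upper bound being a classical a priori estimate), so that $w$ is continuous with $0<\min_{\mathbb{S}^{N-1}}c^{\star}\le w(\xi)\le c^{\star}(\xi)<\infty$; hence $\mathcal{W}$ is a convex body — in particular the closure of its interior, as Definition~\ref{as} demands — and, writing $z=r\xi$,
\[
\mathcal{W}=\bigcap_{e\in\mathbb{S}^{N-1}}\bigl\{z\in\mathbb{R}^{N}\,:\,z\cdot e\le c^{\star}(e)\bigr\},
\]
since for $\xi\cdot e\le0$ the inequality $r\,\xi\cdot e\le c^{\star}(e)$ holds automatically while for $\xi\cdot e>0$ it amounts to $r\le c^{\star}(e)/(\xi\cdot e)$.

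\textbf{Superset inclusion \eqref{superset}.} Given a closed set $C$ with $C\cap\mathcal{W}=\emptyset$, compactness of $\mathcal{W}$ gives $\operatorname{dist}(C,\mathcal{W})>0$, and by the identity above each $z\in C$ admits a direction $e=e_{z}$ with $z\cdot e>c^{\star}(e)$. For a solution $u$ of \eqref{eqgen} from a compactly supported datum I would first replace $u_{0}$ by $u(1,\cdot)$, which satisfies $\sup_{\overline{\Omega}}u(1,\cdot)<1$ (strong maximum principle and Hopf lemma up to $\partial\Omega$, using $u_{0}\not\equiv1$) and tends to $0$ at infinity by parabolic decay; then I would slide a pulsating front $v_{e}$ travelling in direction $e$ at the speed $c^{\star}(e)$ (the minimal admissible speed when $f$ is monostable, the unique one when $f$ is of combustion type) so that $v_{e}(0,\cdot)\ge u_{0}$ on $\overline{\Omega}$, which is possible because $v_{e}\to1$ as $x\cdot e\to-\infty$ and $u_{0}$ has compact support. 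The comparison principle for \eqref{eqgen} (both functions satisfying the Neumann condition) then yields $u\le v_{e}$ for all $t>0$. Writing $v_{e}(t,x)=\Phi_{e}\bigl(x\cdot e-c^{\star}(e)t,x\bigr)$ with $\Phi_{e}(s,\cdot)\to0$ as $s\to+\infty$ uniformly in $x$, the first argument at $x=tz$ equals $t\bigl(z\cdot e-c^{\star}(e)\bigr)+O(1)\to+\infty$, so $u(t,tz)\to0$; a covering argument (finitely many $e_{z}$ suffice after slightly enlarging $C$, and the $\Phi_{e}$ decay uniformly) upgrades this to $\sup_{x\in tC\cap\overline{\Omega}}u(t,x)\to0$. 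In particular the true asymptotic set of spreading is contained in $\mathcal{W}$.

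\textbf{Subset inclusion \eqref{subset}.} This is the core of the proof, and where Rossi's geometric argument enters. It suffices to show that for every $\xi\in\mathbb{S}^{N-1}$ and every $c\in(0,w(\xi))$ one has $u(t,ct\xi)\to1$, locally uniformly in $(\xi,c)$, since any compact $K\subset\operatorname{int}(\mathcal{W})$ is covered by finitely many such moving points, each lying at a definite distance inside $\mathcal{W}$. By the invasion hypothesis $u\to1$ locally uniformly, so for every $R$ there is a time at which $u\ge\eta$ on $B_{R}\cap\overline{\Omega}$, with $\eta\in(\theta,1)$ as close to $1$ as we like; by \cite{DR} such a threshold on a large ball ignites the region and provides compactly supported subsolutions. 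Following Rossi, I would then derive directional spreading estimates by comparing $u$, near the boundary of its already-invaded region, with sub- and supersolutions built from the pulsating fronts: the set $\{u\ge\eta\}$ expands like the envelope of the moving half-spaces $\{z\in\mathbb{R}^{N}\,:\,z\cdot e\le c^{\star}(e)t\}$, $e\in\mathbb{S}^{N-1}$, so that its rate of advance in the direction $\xi$ is $\inf_{e\cdot\xi>0}c^{\star}(e)/(\xi\cdot e)=w(\xi)$; a moving-frame version of the invasion argument of \cite{DR} then turns ``$u\ge\eta$ near $ct\xi$'' into ``$u(t,ct\xi)\to1$''. Combined with the superset inclusion, this identifies the asymptotic set of spreading with $\mathcal{W}$.

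\textbf{Main obstacle.} I expect the delicate part to be the subset inclusion — transplanting Rossi's construction to $\Omega$. The geometric part of the argument is insensitive to the structure of $\Omega$, since the microscopic geometry is absorbed into the speeds $c^{\star}(e)$ and the envelope construction takes place in $\mathbb{R}^{N}$ exactly as in \cite{R1}. What will require care is the underlying PDE toolbox — comparison principle, interior and boundary parabolic estimates, strong maximum principle, Harnack inequality — which has to be used \emph{up to} $\partial\Omega$ for the Neumann problem \eqref{eqgen}; these results hold under \eqref{regomega}--\eqref{regularity}, but because $\Omega$ is non-convex they must be quoted and applied with attention. The step to be followed most closely is the assembly of the local comparisons with pulsating fronts into a single propagation step, together with the control of the error terms coming from the periodic profiles $\Phi_{e}$; the rest is, by comparison, routine.
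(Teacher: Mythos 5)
The superset half of your argument is essentially the paper's: compare $u$ with a critical pulsating front in a suitable direction (after a translation making the front dominate the datum) and use the decay of the profile ahead of the front; the support-function representation $\mathcal{W}=\bigcap_{e}\{z:z\cdot e\le c^{\star}(e)\}$ you invoke is a correct and harmless repackaging of $w(\xi)=\inf_{e\cdot\xi>0}c^{\star}(e)/(e\cdot\xi)$. (Two small blemishes there: after replacing $u_{0}$ by $u(1,\cdot)$ you lose compact support, so dominating it by a front needs an extra word about decay; and continuity of $c^{\star}$ is not available in this generality — the paper only uses lower semicontinuity of $c^{\star}$, which already gives $\inf c^{\star}>0$ and hence continuity of $w$.)

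The genuine gap is the subset inclusion, which is the heart of the theorem and which your proposal does not actually prove. The sentence ``the set $\{u\ge\eta\}$ expands like the envelope of the moving half-spaces $\{z\cdot e\le c^{\star}(e)t\}$, so its rate of advance in direction $\xi$ is $w(\xi)$'' is a restatement of the conclusion, not an argument, and the mechanism you suggest for it — putting subsolutions ``built from the pulsating fronts'' below $u$ near the edge of the invaded region, then a ``moving-frame version'' of the invasion argument of \cite{DR} — does not work as stated: a pulsating front connects $1$ to $0$ and can never be placed below a solution which is small far behind in the direction $-e$, and for monostable $f$ the critical front cannot be truncated into a compactly supported subsolution. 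This is exactly why Rossi's scheme, which the paper follows, is a contradiction argument of a quite different shape: assuming \eqref{subset} fails for some compact $K$, one approximates $\mathcal{W}$ from inside by a smooth star-shaped $W$ with the uniform interior ball property, introduces the lower semicontinuous radius function $\mathcal{R}^{\eta}(t)=\sup\{r:\;u(t,\cdot)>\eta\text{ on }(rW)\cap\overline{\Omega}\}$, selects times $t_{n}$ at which $\mathcal{R}^{\eta}$ lags behind $kt$, translates $u$ by integer vectors $k_{n}$ taken from touching points $x_{n}\in\partial(\mathcal{R}(t_{n})W)$, and passes to the limit to obtain an \emph{entire} solution $u^{\star}$ that is front-like in the normal direction $\tilde\nu(\hat x)$ but moves at a speed $\gamma<c^{\star}(\tilde\nu(\hat x))$; the contradiction then comes from the two technical lemmas (Lemma \ref{lemma1}, upgrading ``$\ge\eta$'' to ``$\to1$'' behind the front, and Lemma \ref{lemma2}, a comparison principle between front-like supersolutions and faster-decaying subsolutions), applied with the critical pulsating front — plus, in the monostable case, an extra subsolution construction to fulfil \eqref{compmono}. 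None of this machinery (the interior-ball geometry, the choice of $t_{n}$, the front-like comparison lemmas, the monostable correction) appears in your proposal, so the subset half remains unproved and the naive version you sketch would fail.
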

Once  Theorem \ref{FG} established, we employ it to derive a simple criterion ensuring that $c^{\star}\not\equiv w$ : we show that, if $c^{\star}\equiv w$, then $c^{\star}$ and $w$ are necessarily constant, see Proposition \ref{prop}. To anwer Question \ref{question} then tantamount to finding domains where $w$ or $c^{\star}$ are not constant. Intuitively, we may think that, if a domain is ``very obstructed" in a direction, then the speed should be small in this direction.

In order to make this intuition rigorous, we derive new estimates on the invasion speed that do take into account the geometry of the domain. This is the subject of Section \ref{geodesic estimates}. The main tool is an upper bound on the heat kernel in $\Omega$. 
Once these estimates at hand, we are able to build domains where $c^{\star}$ and $w$ are not constant, hence different. This is done in Section \ref{c et w differ}, proving then Theorem \ref{thwc}.

Observe that, though we can build domains $\Omega$ where $c^{\star}\not\equiv w$, there is at least one direction $e\in \mathbb{S}^{N-1}$ such that $c^{\star}(e)=w(e)$. Indeed, the Freidlin-Gartner formula \eqref{FGformula} given by Theorem \ref{FG} easily implies that, if $e_{min}$ is a direction that minimizes $c^{\star}$ (which is known to be a lower semi-continuous function, see \cite{AG, R1}), then $c^{\star}(e_{min}) = w(e_{min}) $. The only other characterization of directions $e$ where $c^{\star}(e) =w(e)$ we are aware of is in the KPP case :  it is proven in \cite{BHN1} that  $c^{\star}(e_{inv}) = w(e_{inv})$ if $\Omega$ is invariant in the direction $e_{inv}$ ( i.e.,  $\Omega + \left\{\lambda e_{inv}\right\} = \Omega$, for all $\lambda \in \mathbb{R}$), see \cite{BHN1}.

We show in Section \ref{equality} that, if the domain $\Omega$ satisfies some geometrical hypotheses and if $u \mapsto \frac{f(u)}{u}$ is non-increasing (which implies in particular that $f$ is KPP), then there are directions that satisfy the equality between $c^{\star}$ and $w$. More specifically, we derive the following:

\begin{theorem}\label{symmetry}
Assume that $f$ satisfies \eqref{regularity}, \eqref{hypf} and $u \mapsto \frac{f(u)}{u}$ is non-increasing. Let $c^{\star}$ and $w$ be the critical speed of fronts and the speed of invasion for equation \eqref{PeriodicFKPP}. Then, 
$$
c^{\star}(e) = w(e)
$$ 
in the following cases:
\begin{enumerate}[(i)]

\item If $N = 2$ and $\Omega$ is invariant with respect to the symmetry about the axis directed by $e$, i.e., denoting $S$ such a symmetry, if we have:

$$
 S\Omega = \Omega.
$$

\item If $N \geq 3$ and $\Omega$ is invariant with respect to the rotation around the axis directed by $e$ of angle $\pi$, i.e., denoting $R$ such a rotation, if we have:

$$
R\Omega = \Omega.
$$

\end{enumerate}
\end{theorem}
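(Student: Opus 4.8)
The plan is to reduce the whole statement, via the Freidlin--Gärtner formula of Theorem \ref{FG}, to a single inequality for $c^{\star}$, and then to deduce that inequality from the geometric symmetry together with convexity of $c^{\star}$. Throughout, denote by $S$ the isometry appearing in the statement: the reflection about the line $\mathbb{R}e$ in case (i), the rotation of angle $\pi$ about $\mathbb{R}e$ in case (ii). In both cases $S$ is a linear orthogonal involution, $S^{\top}=S=S^{-1}$, with $Se=e$ and with $\operatorname{Fix}(S)=\mathbb{R}e$. Since $u\mapsto f(u)/u$ is non-increasing, $f$ is in particular of KPP type, so $c^{\star}(\xi)=\min_{\lambda>0}k(\lambda\xi)/\lambda$; extending $c^{\star}$ to $\mathbb{R}^{N}\setminus\{0\}$ by positive homogeneity of degree one, the convexity of $\xi\mapsto k(\xi)$ implies through this representation that $\xi\mapsto c^{\star}(\xi)$ is convex on $\mathbb{R}^{N}$ (see \cite{BHN1}).

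First I would note that $w(e)\le c^{\star}(e)$: by Theorem \ref{FG}, $w(e)=\inf_{\xi\cdot e>0}c^{\star}(\xi)/(\xi\cdot e)$, so this follows by taking $\xi=e$. Hence it remains to prove $w(e)\ge c^{\star}(e)$, that is, $c^{\star}(\xi)\ge(\xi\cdot e)\,c^{\star}(e)$ for every unit vector $\xi$ with $\xi\cdot e>0$. I would then establish the symmetry property $c^{\star}(S\xi)=c^{\star}(\xi)$ for all $\xi\neq0$: because $S$ is an isometry of $\mathbb{R}^{N}$ with $S\Omega=\Omega$ and $-\Delta$ commutes with isometries, composition with $S$ carries the periodic principal eigenvalue problem defining $k(\lambda\xi)$ (with its oblique boundary condition on $\partial\Omega$) into the one defining $k(\lambda S\xi)$ --- equivalently, it maps a pulsating front of speed $c$ in direction $\xi$ to one of the same speed in direction $S\xi$ --- whence $k(S\xi)=k(\xi)$ and $c^{\star}(S\xi)=c^{\star}(\xi)$. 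Granting these two facts, the computation is short: for a unit vector $\xi$ with $\xi\cdot e>0$ one has $(S\xi)\cdot e=\xi\cdot Se=\xi\cdot e$ and $S\bigl(\tfrac12(\xi+S\xi)\bigr)=\tfrac12(\xi+S\xi)$, so, since $\operatorname{Fix}(S)=\mathbb{R}e$ and $\tfrac12(\xi+S\xi)\cdot e=\xi\cdot e>0$, necessarily $\tfrac12(\xi+S\xi)=(\xi\cdot e)\,e$; therefore, by convexity and homogeneity of $c^{\star}$ and by $c^{\star}(S\xi)=c^{\star}(\xi)$,
\[
c^{\star}(\xi)=\tfrac12\,c^{\star}(\xi)+\tfrac12\,c^{\star}(S\xi)\ \ge\ c^{\star}\!\bigl(\tfrac12(\xi+S\xi)\bigr)=c^{\star}\bigl((\xi\cdot e)\,e\bigr)=(\xi\cdot e)\,c^{\star}(e),
\]
and dividing by $\xi\cdot e$ and passing to the infimum over $\xi$ gives $w(e)\ge c^{\star}(e)$. (Equivalently, one may argue geometrically: Theorem \ref{FG} produces the convex set $\mathcal{W}=\bigcap_{\eta\in\mathbb{S}^{N-1}}\{x:x\cdot\eta\le c^{\star}(\eta)\}$, which by the symmetry property is invariant under $S$; if some $q\in\mathcal{W}$ had $q\cdot e>w(e)$, then $\tfrac12(q+Sq)\in\mathcal{W}$ would be a positive multiple $te$ of $e$ with $t>w(e)$, contradicting that $w(e)\,e$ is the exit point of the ray $\mathbb{R}^{+}e$ from $\mathcal{W}$.)

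The step I expect to be the real obstacle is making $c^{\star}(S\xi)=c^{\star}(\xi)$ rigorous, because $S$ need not preserve the coordinate lattice $\mathbb{Z}^{N}$ used to normalise the period. The plan there is: (a) observe that $S$ does preserve the full period lattice $\Lambda:=\{v\in\mathbb{R}^{N}:\Omega+v=\Omega\}\supseteq\mathbb{Z}^{N}$ of $\Omega$, which is immediate from $S(\Omega+v)=\Omega+Sv$; and (b) use that the periodic principal eigenvalue, and likewise the minimal pulsating front speed, does not depend on which periodicity sublattice one works with --- a positive principal eigenfunction that is $\Gamma$-periodic is in particular periodic with respect to every sublattice of $\Gamma$, so by uniqueness of the principal eigenvalue the values coincide. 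Combining (a) and (b), the eigenvalue computed with $\mathbb{Z}^{N}$-periodicity coincides with the one computed with $S\mathbb{Z}^{N}$-periodicity, and the transport argument above then yields $k(S\xi)=k(\xi)$. Everything else is a routine application of Theorem \ref{FG} and of standard facts about KPP periodic principal eigenvalues.
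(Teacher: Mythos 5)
Your overall strategy is sound and, modulo one load-bearing claim, it does prove the theorem, but it is a genuinely different route from the paper's. The paper never invokes convexity of $c^{\star}$: its key step is Lemma \ref{tech sym}, proved by a direct PDE argument (since $u\mapsto f(u)/u$ is non-increasing, the sum $\sum_i\phi_{\xi_i}$ of pulsating fronts is a supersolution, which is then compared with a front for a combustion approximation $f_{\varepsilon}$ of $f$, and one passes to the limit $\varepsilon\to0$); the theorem then follows by applying that lemma to a minimizer $\xi_1$ of the Freidlin--G\"artner infimum together with $\xi_2=R\xi_1$, using $c^{\star}(R\xi_1)=c^{\star}(\xi_1)$ and $\xi_1+R\xi_1\in\mathbb{R}^{+}e$. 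Your argument replaces Lemma \ref{tech sym} by convexity of the $1$-homogeneous extension of $c^{\star}$, deduced from the KPP representation $c^{\star}(\xi)=\min_{\lambda>0}k(\lambda\xi)/\lambda$ and convexity of $\xi\mapsto k(\xi)$; this buys you a cleaner inequality valid for \emph{every} $\xi$ with $\xi\cdot e>0$ (so you never need the infimum in \eqref{FGformula} to be attained, which the paper gets from lower semicontinuity of $c^{\star}$), and your treatment of the lattice issue (that $S$ need not preserve $\mathbb{Z}^{N}$ but preserves the full period lattice, and that the principal eigenvalue is insensitive to the choice of periodicity sublattice) is actually more careful than the paper, which asserts $c^{\star}(\xi_1)=c^{\star}(\xi_2)$ without comment. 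On the other hand, the paper's route only uses the monotonicity of $f(u)/u$ through the supersolution property and avoids spectral theory entirely.

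The one point you must not leave as a citation is precisely the convexity step. For the problem at hand, $k(\xi)$ is the periodic principal eigenvalue of $L_{\xi}$ \emph{with the $\xi$-dependent boundary condition} $\nu\cdot A\nabla\psi=\lambda(\nu\cdot Ae)\psi$ on $\partial\Omega$; convexity of $\xi\mapsto k(\xi)$ in this domain-with-boundary setting is true but is not simply ``see \cite{BHN1}'' --- it requires an argument (e.g.\ writing $k(\xi)$ as the exponential growth rate of $e^{\xi\cdot x}P_t[e^{-\xi\cdot\,\cdot}](x)$ for the Neumann semigroup $P_t$ of the linearized operator and applying Cauchy--Schwarz to its positive kernel), and then the passage from convexity of $k$ to convexity of the homogeneous extension of $c^{\star}$ is an infimal projection of the perspective function, which you only gesture at. Also, your parenthetical ``equivalent'' geometric argument is not self-contained: convexity and $S$-invariance of $\mathcal{W}$ only show that $\sup_{q\in\mathcal{W}}q\cdot e=w(e)$, and to conclude $c^{\star}(e)\le w(e)$ you would still need to know that $\mathcal{W}$ reaches the supporting hyperplane $\{x\cdot e=c^{\star}(e)\}$, i.e.\ exactly the convexity (support-function) property of $c^{\star}$ that the main computation uses; so it is a reformulation, not an alternative proof. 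If you either supply the convexity proof sketched above or substitute the paper's Lemma \ref{tech sym}, your argument is complete.
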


Let us conclude this section with some questions that are still open. The set $\mathcal{W}$ given by \eqref{W} is sometimes called the \emph{Wulff shape} associated with the surface tension $c^{\star}$. It appears in crystallography and in isoperimetric problems. A natural question is whether the function $w$ parametrizing the boundary of $\mathcal{W}$ is regular. Rossi proves in \cite{R1} that it is continuous. We are not aware of further regularity results. We conjecture that, if $c^{\star}$ is smooth (which is the case if $f$ is of the KPP type), then, $w$ is smooth.

Theorem \ref{thwc} states that there are domains $\Omega$ such that $c^{\star}\not\equiv w$. One may wonder on the contrary if there are periodic domains $\Omega \neq \mathbb{R}^{N}$, such that $c^{\star} \equiv w$. Thanks to our Proposition \ref{prop}, this is equivalent to finding domains where $c^{\star}$ is constant. As far as we know, the existence of such domains is still open

\begin{req} 
In addition to the monostable and combustion cases, there is another class of reaction terms $f$  that is widely studied in the literature, namely the \emph{bistable nonlinearities}. The prototype is $f(u) = u(1-u)(u-a)$, with $a\in (0,1)$. In this paper, we \emph{do not} consider such nonlinearities : indeed, the main tool we use is the existence of pulsating traveling fronts with \emph{positive speed}. If $\Omega = \mathbb{R}^{N}$, there are results in some particular cases, see \cite{Ducrot,Xin1, Xin2} for instance. If $\Omega \neq \mathbb{R}^{N}$, the situation is yet to explore, and the geometry of the domain can yield phenomena that do not appear in the combustion or monostable case. For instance, Rossi and the author show in \cite{DR} that invasion can occur in some direction but not in others. However, we mention that the strategy used to derive Theorem \ref{thwc} above still applies if, for every $e\in \mathbb{S}^{N}$, there are pulsating traveling fronts with positive speed, even is $f$ is bistable. 
\end{req}

\section{Freidlin-Gartner formula for a periodic domain}\label{sectionFG}

This section is dedicated to the proof of Theorem \ref{FG}, i.e., we show that the Freidlin-Gartner formula \eqref{FGformula} relating the speeds of fronts to the speed of invasion still holds true when the domain is not $\mathbb{R}^{N}$ but a periodic domain $\Omega$ and with monostable or combustion nonlinearities. Our proof is based on the same strategy as the one used by Rossi in \cite{R1}. We start to state some preliminary technical results. For simplicity, we assume in this whole section that the domain and the coefficients are $1$-periodic, i.e., $L_1 = \ldots = L_N =1$.

\subsection{Preliminary results}

In the proof of Theorem \ref{FG}, we will need some technical lemmas. They generalize those of \cite[Section 2.1]{R1} to the case where the domain is not $\mathbb{R}^{N}$ anymore. The main technical difficulty is that $\Omega$ is not invariant under translations in general. The proofs follow the same lines as in \cite{R1}, and can be found for completeness in the Appendix. We say that $u$ is a subsolution (respectively supersolution) if it satisfies \eqref{eqgen} with the the symbols $=$ replaced by $\leq$ (respectively $\geq$).

The first lemma states that, every entire solution that is ``large enough" in some direction is actually ``front-like" in this direction.
\begin{lemma}\label{lemma1}
Let $\gamma > 0$. Assume that \eqref{regularity} and \eqref{hypf} hold. Let ${\displaystyle {u\in C^{1+\alpha/2,2+\alpha}\big(\mathbb{R}\times\Omega\big)}}$ for some $\alpha \in (0,1)$ be an entire supersolution of \eqref{eqgen} such that

$$
\inf_{\substack{t<0\\ x \cdot e<\gamma t \\ x\in\Omega }} u(t,x) \: > S,
$$
where $S$ is defined in \eqref{hypf}. Then:

$$
\liminf_{\delta\to+\infty}\inf_{\substack{t<0  \\  x\cdot e< \gamma t -\delta  \\ x\in\Omega }} u(t,x) \geq 1.
$$

\end{lemma}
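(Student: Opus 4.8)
\emph{Overall strategy.} The plan is to run a translation‑plus‑comparison argument, the whole point being that the hypothesis only yields $u\ge m$ on a \emph{moving half‑space} and not on all of $\Omega$, where I set $m:=\inf_{\{t<0,\,x\cdot e<\gamma t,\,x\in\Omega\}}u>S$. Recall that $u$ necessarily takes values in $[0,1]$, so that $f(\cdot,u)$ is defined; if $m\ge 1$ there is nothing to prove, so fix $\epsilon\in(0,1)$ and assume $m<1$.

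\emph{Step 1: a bump subsolution converging to $1$.} Since $m>S$, the invasion result recalled in the introduction (from \cite{DR}) yields a radius $r>0$ such that, for every $y\in\overline{\mathcal C}$, the solution $\zeta_y$ of \eqref{eqgen} on $(0,+\infty)\times\Omega$ whose initial datum is a bump supported in $\Omega\cap B_r(y)$ of some height $\eta\in(0,m)$ to which \cite{DR} applies, satisfies $\zeta_y(t,\cdot)\to1$ as $t\to+\infty$, locally uniformly in $\overline{\Omega}$. By periodicity of $\Omega,A,q,f$ and continuous dependence on $y$, a compactness argument over $y\in\overline{\mathcal C}$ provides a single $T=T(\epsilon)>0$ with $\zeta_y(T,y)>1-\epsilon$ for all $y\in\overline{\mathcal C}$.

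\emph{Step 2: localisation and comparison.} It is enough to exhibit $\Delta=\Delta(\epsilon)>0$ such that $u(t_0,x_0)>1-\epsilon$ whenever $t_0<0$, $x_0\in\Omega$ and $x_0\cdot e<\gamma t_0-\delta$ with $\delta\ge\Delta$; taking the infimum over such $(t_0,x_0)$ gives $\inf_{\{t<0,\,x\cdot e<\gamma t-\delta,\,x\in\Omega\}}u\ge1-\epsilon$ for every $\delta\ge\Delta(\epsilon)$, hence the claimed $\liminf_{\delta\to+\infty}(\cdots)\ge1$. So fix such $(t_0,x_0)$, write $x_0=y_0+k_0$ with $k_0\in\mathbb Z^N$ and $y_0\in\overline{\mathcal C}\cap\Omega$, and set $\tilde u(t,x):=u(t+t_0,\,x+k_0)$. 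By $\mathbb Z^N$‑periodicity of the domain and coefficients, $\tilde u$ is again an entire supersolution of \eqref{eqgen} on $\mathbb R\times\Omega$, and it satisfies $\tilde u\ge m$ on $\{\,t<-t_0,\ x\cdot e<\gamma t+(\gamma t_0-k_0\cdot e),\ x\in\Omega\,\}$; moreover $\gamma t_0-k_0\cdot e>\delta-\sqrt N$, because $x_0\cdot e<\gamma t_0-\delta$ and $|y_0\cdot e|\le\sqrt N$. Let $z(t,\cdot):=\zeta_{y_0}(t+T,\cdot)$, which solves \eqref{eqgen} by autonomy in $t$, with datum $z(-T,\cdot)$ the bump above, supported in $\Omega\cap B_r(y_0)$ of height $\eta<m$. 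Take $\Delta:=2\sqrt N+r+\gamma T$. Since $\gamma>0$, the tightest instant in $[-T,0]$ for the inclusion $B_r(y_0)\subset\{x\cdot e<\gamma t+(\gamma t_0-k_0\cdot e)\}$ is $t=-T$, where it holds because $y_0\cdot e+r\le\sqrt N+r<-\gamma T+\delta-\sqrt N$. Consequently $\tilde u(-T,\cdot)\ge m\ge z(-T,\cdot)$ on $\Omega\cap B_r(y_0)$ and $z(-T,\cdot)=0\le\tilde u(-T,\cdot)$ elsewhere, while on $[-T,0]\times\partial\Omega$ both satisfy the Neumann condition. As $A,q,f$ are bounded and $f$ is Lipschitz in $u$, the parabolic comparison principle for bounded sub/supersolutions on the unbounded cylinder $[-T,0]\times\Omega$ gives $\tilde u\ge z$ there; evaluating at $(0,y_0)$ yields $u(t_0,x_0)=\tilde u(0,y_0)\ge z(0,y_0)=\zeta_{y_0}(T,y_0)>1-\epsilon$, as wanted.

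\emph{Main obstacle.} The delicate part is the bookkeeping forced by the non‑invariance of $\Omega$ under translations: one must reduce to the periodicity cell by an integer shift so that the bump subsolution comes from a compact family (hence with a uniform invasion time $T$), and then calibrate $\Delta$ so that the \emph{expanding} support of that subsolution stays trapped, all along $[-T,0]$, inside the \emph{moving} half‑space on which $\tilde u\ge m$; in \cite{R1} this step is transparent precisely because $\Omega=\mathbb R^N$ is translated onto itself. Invoking the comparison principle on the unbounded cylinder is a second, routine point, licit here since everything in sight is bounded.
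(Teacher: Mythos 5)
Your Step 2 is sound as bookkeeping goes (the integer shift, the bound $\gamma t_0-k_0\cdot e>\delta-\sqrt N$, and the choice $\Delta=2\sqrt N+r+\gamma T$ do give the ordering at time $-T$, and only the initial-time ordering is needed for the comparison, so the worry about the support staying in the moving half-space for all $t\in[-T,0]$ is superfluous). The genuine gap is in Step 1: the invasion result of \cite{DR} that your whole argument rests on is not available under the hypotheses of Lemma \ref{lemma1}. The lemma assumes only \eqref{regularity} and \eqref{hypf}, while the result you invoke requires $f$ to be monostable or of combustion type \emph{and}, crucially, the drift $q$ to be ``not too large''; the latter restriction appears nowhere in this paper, not even in Theorem \ref{FG} where the lemma is applied, so your proof would at best establish the lemma (and hence the Freidlin--Gartner formula) under an extra hypothesis on $q$. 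In addition, the uniform-in-$y$ invasion time $T(\epsilon)$ is asserted via ``continuous dependence plus compactness'' without proof; it is fixable (e.g.\ fix a single bump centered at one point of $\mathcal C\cap\Omega$ and use the locally uniform convergence $\zeta(t,\cdot)\to1$ over the compact cell, rather than a continuously varying family), but as written it is a second unproved step. Note also that you need $m>\theta$ (the threshold of \cite{DR}), which follows from $m>S$ only after checking $S\ge\theta$, a point you skip.

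For contrast, the paper's proof is self-contained and needs none of this machinery: set $h$ equal to the $\liminf$ in question, suppose $h\in(S,1)$, pick $(t_n,x_n)$ with $x_n\cdot e-\gamma t_n\to-\infty$ and $u(t_n,x_n)\to h$, translate by the integer parts $k_n$ of $x_n$, and use interior/boundary parabolic estimates to pass to an entire supersolution $u_\infty$ which attains the value $h$ at an interior point of $\{t\le0\}$ while satisfying $u_\infty\ge h$ there; the strong maximum principle and Hopf lemma (using $f\ge0$) force $u_\infty\equiv h$, hence $f(\cdot,h)\equiv0$, contradicting $h\in(S,1)$. So the intended argument is a Liouville-type translation/compactness argument, not a reduction to an invasion theorem; if you want to salvage your route you must either prove the needed hair-trigger property under the lemma's hypotheses (which is essentially a harder statement) or switch to the limiting argument above.
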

The following lemma is a comparison principle for front-like solutions.

\begin{lemma}\label{lemma2}

Assume that \eqref{regularity} and  \eqref{hypf}  hold. Let $\overline{u}, \ \underline{u} \in C^{1+\alpha/2,2+\alpha}\big(\mathbb{R}\times\Omega\big)$, for some $\alpha\in(0,1)$, be respectively entire supersolution and subsolution of \eqref{eqgen}. Assume that there are $e\in\mathbb{S}^{N-1}$, $\gamma > 0 $ such that

\begin{equation}\label{compsursol}
\overline{u}>0,\ \ \liminf_{\delta\to+\infty}\inf_{\substack{t<0\\
x\cdot e<\gamma t - \delta\\
x\in\Omega }}\overline{u}(t,x)\:\geq 1.
\end{equation}
Moreover, assume that $\underline{u}\leq1$ and that there is $\eta>0$
such that the following hold:
\begin{itemize}

\item The nonlinearity $f$ is of the combustion type \eqref{combustion} and we have:
\begin{equation}\label{compcomb}
\forall s>0,\ \exists L\in\mathbb{R},\ \underline{u}(t,x)\leq s\ \text{ if } \ t\leq0,\ x\cdot e\geq(\gamma+\eta)t+L,\ x\in\Omega,
\end{equation}
or

\item the nonlinearity $f$ is of the monostable type \eqref{monostable} and we have:

\begin{equation}\label{compmono}
\exists L\in\mathbb{R},\ \underline{u}(t,x)\leq0\ \text{ if } \ t\leq0,\ x\cdot e\geq(\gamma+\eta)t+L,\ x\in\Omega.
\end{equation}
\end{itemize}

Then, the following comparison result holds
\[
\underline{u}(t,x)\leq\overline{u}(t,x),\ \ \forall t\in \mathbb{R},\ \forall x\in\Omega.
\]

\end{lemma}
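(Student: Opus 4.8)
The plan is to prove Lemma \ref{lemma2} by a sliding argument, adapting Rossi's method in \cite{R1} to the periodic domain $\Omega$. The key idea is to compare $\underline{u}$ with translates of $\overline{u}$ in the direction $e$: for $\tau \in \mathbb{R}$, set $\overline{u}_{\tau}(t,x) := \overline{u}(t+\tau, x)$ (a time shift, which mimics a spatial shift along $e$ because of the front-like behaviour). One defines
\[
\tau^{\star} := \inf\{ \tau \in \mathbb{R} \ : \ \underline{u}(t,x) \leq \overline{u}_{\sigma}(t,x) \ \text{ for all } \sigma \geq \tau, \ (t,x) \in \mathbb{R}\times\Omega \},
\]
and the goal is to show $\tau^{\star} \leq 0$, which (together with the fact that $\overline{u}$ is an entire solution, so we can also apply the argument at shifted times) yields $\underline{u} \leq \overline{u}$. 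The first step is to check that $\tau^{\star}$ is well-defined, i.e. that for $\tau$ large enough the inequality $\underline{u} \leq \overline{u}_{\tau}$ holds everywhere. This uses the hypotheses: in the region $\{x\cdot e \geq (\gamma+\eta)t + L\}$ the subsolution $\underline{u}$ is below $0$ (monostable case) or below any prescribed small $s$ (combustion case, where $f$ is nonincreasing near $0$ by \eqref{decroissance}), while in the complementary region $\{x\cdot e < (\gamma+\eta)t + L\}$, which for $\tau$ large is contained in $\{x\cdot e < \gamma(t+\tau) - \delta\}$ for arbitrarily large $\delta$, the assumption \eqref{compsursol} forces $\overline{u}_{\tau}$ to be close to $1 \geq \underline{u}$; positivity of $\overline{u}$ handles the remaining part in the monostable case.

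Next, assuming for contradiction that $\tau^{\star} > 0$, I would show the contact is actually achieved. By continuity and the definition of $\tau^{\star}$, $\underline{u} \leq \overline{u}_{\tau^{\star}}$ everywhere and there is a sequence $(t_n, x_n)$ with $\overline{u}_{\tau^{\star}}(t_n,x_n) - \underline{u}(t_n,x_n) \to 0$. One must rule out escape to infinity. Since $\Omega$ is periodic we may, writing $x_n = k_n + y_n$ with $k_n \in \mathbb{Z}^N$ and $y_n$ in the periodicity cell, translate back: set $\tilde{\underline{u}}_n(t,x) := \underline{u}(t + \text{(time shift)}, x + k_n)$ — but here the difficulty that $\Omega$ is only $\mathbb{Z}^N$-periodic, not translation-invariant, enters: the translated functions are sub/supersolutions on $\Omega - k_n = \Omega$, so this is fine. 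Passing to the limit via parabolic estimates ($C^{1+\alpha/2,2+\alpha}$ bounds, the domain being $C^3$) gives limiting entire sub/supersolutions $\underline{u}_\infty \leq \overline{u}_\infty$ on $\mathbb{R}\times\Omega$ touching at an interior or boundary point. The strong maximum principle (interior) or Hopf's lemma together with the Neumann condition (boundary point) then force $\underline{u}_\infty \equiv \overline{u}_\infty$ in a neighbourhood and, by connectedness of $\Omega$ and unique continuation, on all of $\mathbb{R}\times\Omega$. One has to be careful that the limiting difference is genuinely zero at a point in $\overline\Omega$ and not lost at infinity; this is where controlling where $(t_n,x_n)$ lives along $e$ matters, and one uses the front-like condition \eqref{compsursol} on $\overline{u}$ and the decay condition \eqref{compcomb}/\eqref{compmono} on $\underline{u}$ to confine $x_n\cdot e$ to a bounded window, so that after the periodic translation everything stays in a compact set and no mass escapes.

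Finally, $\underline{u}_\infty \equiv \overline{u}_\infty$ produces a contradiction with $\tau^{\star} > 0$: the limiting supersolution $\overline{u}_\infty$ inherits from \eqref{compsursol} that $\liminf$ as $x\cdot e \to -\infty$ is $\geq 1$, hence $\overline{u}_\infty$ is not identically small, while $\underline{u}_\infty \leq 1$; but since $\tau^{\star}>0$ and we could have shifted slightly less, strict positivity of the gap $\overline{u}_{\tau^\star - \epsilon} - \underline{u}$ somewhere combined with the equality $\underline{u}_\infty = \overline{u}_\infty$ forces, via comparison on large space-time cylinders and the ordering of time-translates of $\overline{u}$ (one needs $\overline{u}_{\tau}$ to be strictly monotone or at least that two distinct translates cannot coincide, which again follows from the front-like limit $\geq 1$ versus $\to 0$), a contradiction. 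Concluding $\tau^{\star} \leq 0$ gives $\underline{u} \leq \overline{u}_0 = \overline{u}$, as desired. The main obstacle is the compactness step: ensuring that the near-contact points $(t_n, x_n)$ do not drift to $x\cdot e = +\infty$ (where both functions are small and the inequality is vacuous) nor to $x\cdot e = -\infty$, so that after exploiting the $\mathbb{Z}^N$-periodicity of $\Omega$ one obtains a nontrivial limiting configuration to which the strong maximum principle and Hopf lemma apply — this is precisely the point where the argument differs from the translation-invariant setting of \cite{R1} and requires the periodic structure of $\Omega$ to be used carefully. The details are carried out in the Appendix.
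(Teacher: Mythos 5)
Your plan rests on sliding the time translates $\overline{u}_{\tau}(t,x)=\overline{u}(t+\tau,x)$ over $\underline{u}$, but this mechanism does not work under the stated hypotheses, for two concrete reasons. First, nothing orders the family $\{\overline{u}_{\tau}\}_{\tau}$: $\overline{u}$ is an arbitrary entire supersolution, not assumed monotone in time, so ``$\underline{u}\leq\overline{u}_{\sigma}$ for all $\sigma\geq\tau$'' is not a monotone condition in $\tau$ and the infimum $\tau^{\star}$ has no reason to behave as in a genuine sliding argument. Second, and more fundamentally, the initialization fails in the combustion case: ahead of the front, i.e.\ for $x\cdot e\geq(\gamma+\eta)t+L$, you only know $\underline{u}\leq s$ for a suitable $L=L(s)$, while $\overline{u}_{\tau}$ is merely positive there, with no positive lower bound (it may decay to $0$ as $x\cdot e\to+\infty$). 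Hence for no value of $\tau$ can you guarantee $\underline{u}\leq\overline{u}_{\tau}$ in that region, and the set defining $\tau^{\star}$ may be empty. The same lack of a positive lower bound on $\overline{u}$ ahead of the front also defeats your compactness step: in the combustion case the near-contact points can genuinely escape to $x\cdot e\to+\infty$, where both functions tend to $0$, and the decay conditions \eqref{compcomb}--\eqref{compsursol} alone do not exclude this.

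The paper's proof avoids these problems by perturbing vertically rather than sliding: it works with $\overline{u}_{\varepsilon}:=\overline{u}+\varepsilon$, shows the ordering $\underline{u}\leq\overline{u}_{\varepsilon}$ for all sufficiently negative times (behind the front $\overline{u}_{\varepsilon}\geq1+\varepsilon/2\geq\underline{u}$, ahead of it $\underline{u}\leq\varepsilon<\overline{u}_{\varepsilon}$), takes the first time $t_{\varepsilon}$ of near-contact, locates a level $\rho_{\varepsilon}$ with $\inf_{x\cdot e=\rho_{\varepsilon}}(\overline{u}_{\varepsilon}-\underline{u})(t_{\varepsilon},\cdot)=0$, and then distinguishes three cases ($\rho_{\varepsilon}$ bounded, $\to-\infty$, $\to+\infty$), in each one translating by periods, passing to the limit by parabolic estimates and applying the strong comparison principle and Hopf lemma. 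The crucial devices that close the noncompact cases are exactly what your proposal lacks: when $\rho_{\varepsilon}\to-\infty$ one uses that $f(x,\cdot)$ is nonincreasing on $[S,1]$ so that $\overline{u}+\varepsilon$ is still a supersolution where $\overline{u}>S$, and when $\rho_{\varepsilon}\to+\infty$ (the combustion case) one uses \eqref{decroissance} to see that $\underline{u}-\varepsilon$ is still a subsolution where $\underline{u}\leq\theta$; in the monostable case that last regime is excluded because $\underline{u}\leq0<\overline{u}_{\varepsilon}$ there. Without the $\varepsilon$-room and these monotonicity properties of $f$ near $0$ and $1$, the contact-at-infinity configurations cannot be ruled out, so your argument as sketched has a genuine gap and would need to be restructured along these lines.
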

In addition to those two technical lemmas, we shall need the following result, stating that, in our framework, the speed of invasion $w$ is a continuous function:

\begin{lemma}\label{prop1}
Let $A , q ,f ,\Omega$ be periodic, satisfying  \eqref{regularity}-\eqref{hypf}. Assume that $f$ is of the monostable type \eqref{monostable} or of the combustion type \eqref{combustion}. Let $w$ be defined by \eqref{FGformula}. Then $w$ is a continuous function from the sphere $\mathbb{S}^{N-1}$ to $\mathbb{R}_{+}$. 
\end{lemma}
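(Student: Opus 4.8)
The plan is to obtain the continuity of $w$ from two qualitative properties of the critical speed $c^\star$, both available for the periodic domain $\Omega$ under \eqref{regularity}--\eqref{hypf}: one has $c^\star(e)>0$ for every $e\in\mathbb{S}^{N-1}$ (see \cite{BH1}), and $c^\star$ is lower semi-continuous on $\mathbb{S}^{N-1}$ (see \cite{AG,R1}). First I would record two elementary consequences for $w$ as defined by \eqref{FGformula}. Taking $\xi=e$ there gives $w(e)\le c^\star(e)<+\infty$, so $w$ is finite-valued; and since a positive lower semi-continuous function on the compact sphere attains a positive minimum, $\underline c:=\min_{\mathbb{S}^{N-1}}c^\star>0$, whence, because $e\cdot\xi\le1$ whenever $e\cdot\xi>0$, also $w(e)\ge\underline c>0$ for every $e$. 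It then remains to prove that $w$ is upper semi-continuous and lower semi-continuous.

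For upper semi-continuity I would fix $e_0\in\mathbb{S}^{N-1}$ and $\varepsilon>0$, pick $\hat\xi$ with $e_0\cdot\hat\xi>0$ and $c^\star(\hat\xi)/(e_0\cdot\hat\xi)<w(e_0)+\varepsilon$ (possible by definition of the infimum), and use that $e\mapsto c^\star(\hat\xi)/(e\cdot\hat\xi)$ is continuous on the relatively open neighbourhood of $e_0$ where $e\cdot\hat\xi>0$. For $e$ close enough to $e_0$ the direction $\hat\xi$ is still admissible in \eqref{FGformula} and $w(e)\le c^\star(\hat\xi)/(e\cdot\hat\xi)<w(e_0)+\varepsilon$; letting $\varepsilon\to0$ yields $\limsup_{e\to e_0}w(e)\le w(e_0)$.

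For lower semi-continuity I would argue by contradiction: suppose $e_n\to e_0$ with $w(e_n)\to\ell<w(e_0)$ (so $\ell<+\infty$ since $w(e_0)<+\infty$), and choose $\xi_n$ with $e_n\cdot\xi_n>0$ and $c^\star(\xi_n)/(e_n\cdot\xi_n)\le w(e_n)+1/n$. Passing to a subsequence, $\xi_n\to\bar\xi\in\mathbb{S}^{N-1}$ and $e_0\cdot\bar\xi=\lim_n(e_n\cdot\xi_n)\ge0$. If $e_0\cdot\bar\xi=0$, then $c^\star(\xi_n)=\big(c^\star(\xi_n)/(e_n\cdot\xi_n)\big)(e_n\cdot\xi_n)\le\big(w(e_n)+1/n\big)(e_n\cdot\xi_n)\to\ell\cdot0=0$, contradicting $c^\star(\xi_n)\ge\underline c>0$. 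Hence $e_0\cdot\bar\xi>0$, so $\bar\xi$ is admissible in \eqref{FGformula} at $e_0$, and using the lower semi-continuity of $c^\star$ together with $e_n\cdot\xi_n\to e_0\cdot\bar\xi>0$,
\[
w(e_0)\le\frac{c^\star(\bar\xi)}{e_0\cdot\bar\xi}\le\liminf_{n\to\infty}\frac{c^\star(\xi_n)}{e_n\cdot\xi_n}\le\liminf_{n\to\infty}\Big(w(e_n)+\frac1n\Big)=\ell,
\]
which contradicts $\ell<w(e_0)$. Thus $\liminf_{e\to e_0}w(e)\ge w(e_0)$, and together with the previous paragraph $w$ is continuous on $\mathbb{S}^{N-1}$.

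I do not expect a genuine difficulty: this is essentially Rossi's argument from \cite{R1} for $\Omega=\mathbb{R}^N$, and none of it uses translation invariance of the domain — only the finiteness, positivity, and lower semi-continuity of $c^\star$, which hold for periodic $\Omega$. The one point that needs care is the degenerate configuration in the lower semi-continuity step, where the near-optimal directions $\xi_n$ accumulate at a direction orthogonal to $e_0$; the uniform bound $c^\star\ge\underline c>0$ — itself immediate from lower semi-continuity and positivity on the compact sphere — is precisely what rules this out.
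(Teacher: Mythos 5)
Your proof is correct and follows essentially the same route as the paper: the paper likewise reduces the lemma to the pointwise positivity and lower semi-continuity of $c^{\star}$ (hence $\inf_{\mathbb{S}^{N-1}} c^{\star}>0$ by compactness) and then simply invokes \cite[Proposition 2.6]{R1} for the continuity of $w$, whereas you write out that elementary two-sided semicontinuity argument (including the degenerate case $e_{0}\cdot\bar\xi=0$, correctly excluded via $c^{\star}\geq \underline{c}>0$) yourself. Like the paper, you import the lower semi-continuity of $c^{\star}$ for a periodic domain $\Omega\neq\mathbb{R}^{N}$ from the $\mathbb{R}^{N}$ results of \cite{AG,R1} (the paper states the proof ``can be readily adapted''), so no gap arises beyond what the paper itself leaves to the reader.
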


It is proven in  \cite[Proposition 2.6]{R1} that, if $c : \mathbb{S}^{N-1} \to \mathbb{R}_{+}$ is such that $\inf c >0$ and if $w$ is defined by $w(\xi) := \inf_{e\cdot \xi >0}\frac{c(e)}{e \cdot\xi}$, then $w$ is continuous. Hence, Lemma \ref{prop1} comes directly if we can prove that $\inf c^{\star}>0$, where $c^{\star}$ is the critical speed of fronts. To prove this, it is sufficient to show that $c^{\star}$ is lower semicontinuous. This is done in \cite[Proposition 2.5]{R1} when $\Omega = \mathbb{R}^{N}$, and the proof can be readily adapted, hence we will not prove Lemma \ref{prop1} but refer the reader to \cite{R1}. Let us mention another result of independent interest by Alfaro and Giletti \cite{AG} in the case where $\Omega = \mathbb{R}^{N}$, which states that, under suitable assumptions, $c^{\star}$ is actually continuous.

\subsection{Proof of Theorem \ref{FG}}

This section is dedicated to the proof Theorem \ref{FG}. We show that the Freidlin-Gartner formula \eqref{FGformula} still holds in the context of periodic domains $\Omega$ considered in this paper. The proof is divided in several steps. The main idea is to use a geometric argument, introduced in \cite{R1} : from an initial datum that invades space, we construct a front-like solution of our problem, and we compare it to pulsating traveling fronts.

\begin{proof}

We start to prove that $\mathcal{W}$, defined by \eqref{W}, is an asymptotic subset of spreading. We argue by contradiction. We assume that $\mathcal{W}$ is not an asymptotic subset of spreading. Then, there is a compact set $K \subset \text{int}(W)$ such that \eqref{subset} does not hold. Now, we take $W\subset \mathcal{W}$, $W$ star-shaped with respect to the origin, compact and $C^{\infty}$ such that $K \subset \text{int}(W)$. We assume that $W$ is the graph of a function $\tilde{w}$, i.e., $W=\left\{ r\xi \ : \  \xi \in \mathbb{S}^{N-1}, \ 0\leq r \leq \tilde{w}(\xi) \right\}$, with $\tilde{w}$ smooth and $\tilde{w}<w$, so that $W$ is strictly contained in $\mathcal{W}$. We take $\tilde{w}$ strictly positive. This is possible because the function $w$ is continuous thanks to Lemma \ref{prop1}.

The set $W$ satisfies the uniform interior ball estimates : $\exists \rho >0$ such that $\forall x \in \partial W, \, \exists y \in W$ such that $\overline{B}_{\rho}(y) \subset W$ and $x \in \partial B_{\rho}(y)$, where $B_{\rho}(y)$ is the ball of center $y$ and of radius $\rho$. In the course of the proof, $u(t,x)$ denotes a solution of \eqref{eqgen} arising from a non-negative, compactly supported initial datum such that invasion occurs, i.e., $u(t,x) \to 1$ as $t$ goes to $+\infty$, locally uniform in $x\in \overline{\Omega}$.

\emph{First step. Definition of $\mathcal{R}^{\eta}$.} \\
Let $0<\eta<1$. We define
$$
\mathcal{R}^{\eta}(t)\ :=\sup\{r\geq0 \ : \ \forall x\in(rW)\cap\overline{\Omega},\ u(t,x)>\eta\}.
$$
For $t\geq 0$, this quantity is well defined because $u(t,x)$ decays to zero as $\vert x \vert$ goes to $+\infty$ (this comes easily by comparison with pulsating traveling fronts) implying that $\mathcal{R}^{\eta}(t) < +\infty$. Moreover, we have that $\mathcal{R}^{\eta}(t)\to+\infty$ as $t$ goes to $+\infty$ (because of the assumption that $u(t,x) \to 1$  locally uniformly in $x$ when $t \to +\infty$).

Remembering that we assumed, by contradiction, that there is a compact set $K\subset \text{int}(W)$ such that \eqref{subset} does not hold, we can infer that there are $\eta,k\in(0,1)$ such that
\begin{equation}\label{R}
\liminf_{t\to+\infty}\frac{\mathcal{R}^{\eta}(t)}{t}<k.
\end{equation}
Indeed, if this were not the case, then $\forall\eta\in(0,1),\ \liminf_{t \to +\infty} \frac{\mathcal{R}^{\eta}(t)}{t}\geq1$. Hence, taking $h \in (0,1)$ such that $K \subset hW$, we have
\begin{equation*}
\begin{array}{llc}
\eta &\leq \displaystyle\liminf_{t \to +\infty} \inf_{x \in R^{\eta}(t)W}u(t,x) \\
&\leq \displaystyle\displaystyle\liminf_{t \to +\infty} \inf_{x \in htW}u(t,x) \\
&\leq \displaystyle\displaystyle\liminf_{t \to +\infty} \inf_{x\in tK}u(t,x).
\end{array}
\end{equation*}
This being true for each $\eta \in (0,1)$, it would yield that $K$ satisfies \eqref{subset}, which we assumed not to be the case. Observe that \eqref{R} is still verified if we increase $\eta$. We do so, and in the following we assume that $\eta \in (S,1)$, where $S$ is defined in \eqref{hypf}.

 From now on, we simplify our notations by writing $\mathcal{R}$ instead of $\mathcal{R}^{\eta}$. Observe that $\mathcal{R}$ is lower semicontinuous. Indeed, let $t_{n}$ be a sequence such that $t_{n}\to t_{0}$ as $n$ goes to $+\infty$ and such that $\mathcal{R}(t_{n})\to R\in \mathbb{R}$. Consider $r>R$. Then, for $n$ large enough, we have that $r>\mathcal{R}(t_{n})$, and, by definition of $\mathcal{R}(t_{n})$, there is $x_{n}\in (rW)\cap \overline{\Omega}$ such that $u(t_{n},x_{n})\leq\eta$. By continuity of $u$, there is some $x_{0}\in (rW)\cap\overline{\Omega}$ such that $u(t_{0},x_{0})\leq\eta$. This implies that $\mathcal{R}(t_{0})\leq r$ , and then that $\mathcal{R}(t_{0})\leq R$ by arbitrariness of $r>R$, hence the semicontinuity.

\emph{Second step. Shifting the function.} \\
By definition of $\mathcal{R}$ we have that $\liminf_{t\to+\infty}(\mathcal{R}(t)-kt)=-\infty$. We define, for $n\in \mathbb{N}$,
$$
 t_{n}\,:=\inf\{t\geq0\ :\ \mathcal{R}(t)-kt\leq-n\}. 
$$
The lower semicontinuity of $\mathcal{R}$ (proven in the first step) gives us that the above infimum is a minimum, i.e., that $\mathcal{R}(t_{n})-kt_{n}\leq-n<\mathcal{R}(t)-kt$, $\forall t<t_{n}$, and that $t_{n}\to+\infty$ as $n$ goes to $+\infty$. Hence, the sequence $(t_{n})_{n \in \mathbb{N}}$ satisfies:

$$
\lim_{n\to+\infty}t_{n}=+\infty \ \text{ and } \ \forall n\in\mathbb{N},\ \forall t\in[0,t_{n}), \quad \mathcal{R}(t_{n})-k(t_{n}-t)<\mathcal{R}(t).
$$
Now, by definition of $\mathcal{R}(t)$, we have that $ \forall r>\mathcal{R}(t),\ \exists \: x_{r} \in \left( rW\cap \overline{\Omega}\right)\backslash\left((\mathcal{R}(t)W) \cap \overline{\Omega}\right)$ such that $u(t,x_{r})\leq \eta $. Up to extraction, we can assume that $x_{r} \to x_{\infty}$ as $r$ goes to $\mathcal{R}(t)$, where $x_{\infty}\in  \overline{\Omega}\cap \partial\left( \mathcal{R}(t)W\right)$. 
By continuity, we have that $u(t,x_{\infty})=\eta$.

Hence, we can consider a sequence $(x_{n})_{n\in \mathbb{N}} \in \overline{\Omega}$ such that $u(t_{n},x_{n})=\eta$, with the additional property that $x_{n}\in \partial \left(\mathcal{R}(t_{n})W\right)$. Clearly, $\vert x_{n}\vert \to+\infty$ as $n$ goes to $+\infty$. If $x\in \partial W$, let $\tilde{\nu}(x)$ the outer unit normal to $W$ at the point $x$. We define
$$
\hat{x}_{n}=\frac{x_{n}}{\mathcal{R}(t_{n})} , \quad y_{n}= \hat{x}_{n}-\rho \tilde{\nu} (\hat{x}_{n}).
$$
By definition, $\hat{x}_{n} \in \partial W$ and $y_{n}$ is the center of the interior ball tangent at $W$ at point $\hat{x}_{n}$, of radius $\rho$ (we recall that $W$ satisfies the uniform interior ball estimate with radius $\rho$).

For every $n$, we define $k_{n} \in \mathbb{Z}^{N} $ and $z_{n} \in [0,1)^{N}$ by $ x_{n}=k_{n}+z_{n}$. Up to extraction, we can assume that there is $z \in [0,1]^{N}$ such that $z_{n} \to z$ as $n\to+\infty$. We also assume that there is $\hat{x}$ such that $\hat{x}_{n}$ converges to $\hat{x}$, whence $\tilde{\nu} (\hat{x}_{n})$ converges to $\tilde{\nu} (\hat{x})$. We now define, for $n\in \mathbb{N}$, the translated functions: 
$$
u_{n}(t,x)=u(t+t_{n},x+k_{n}).
$$
Thanks to the periodicity and regularity hypotheses on $\Omega$, we can apply the usual interior and portion boundary parabolic estimates (see, for instance \cite[Theorems 5.2, 5.3]{La}) to get that $u_{n}$ converges uniformly locally to an entire solution $u^{\star} $ of the equation \eqref{eqgen}. Moreover $u^{\star}(0,z)= \eta$.

\emph{Third step. Properties of $u^{\star}$.}\\
 We show here that $u^{\star}$ is a front-like solution, in the sense that it satisfies, denoting $ H_{T} := \{ x\in \Omega \ :  \  x \cdot \tilde{\nu}(\hat{x}) < -k \hat{x} \cdot \tilde{\nu} ( \hat{x} ) T \} $:
\begin{equation}\label{inv}
\forall T\geq0,\ \forall x\in H_{T}+\left\{z\right\},\quad  u^{\star}(-T,x)\geq\eta.
\end{equation}
To show this, take $T\in[0,t_{n}]$ and $x\in(\mathcal{R}(t_{n})-kT)W\cap\Omega$. As $\mathcal{R}(t_{n})-kT\leq\mathcal{R}(t_{n}-T)$, we have that
$x\in\mathcal{R}(t_{n}-T)W\cap\Omega$. Therefore, by definition of $\mathcal{R}$, $u(t_{n}-T,x)\geq\eta$.
Then, we have

$$
\forall T\in[0,t_{n}], \ \forall x\in\left((\mathcal{R}(t_{n})-kT)W\right)\cap\Omega-\left\{ k_{n} \right\} , \quad u_{n}(-T,x)\geq\eta.
$$
From that, we infer:
$$
\forall T\geq 0 ,\ \forall x\in \Omega\cap\bigcup_{M \in \mathbb{N}} \bigcap_{n \geq M}( (\mathcal{R}(t_{n})-kT)W-\{k_{n}\}) , \quad u^{\star}(-T,x)\geq\eta.
$$
To prove \eqref{inv}, it suffices to show that $H_{T}+\{z\} \subset \Omega\cap\bigcup_{M \in \mathbb{N}} \bigcap_{n \geq M}( (\mathcal{R}(t_{n})-kT)W-\{k_{n}\})$.
To see this, take $ x \in H_{T}+\{z\}$. We start to compute:

 \begin{align*}
\left| \frac{x+k_{n}}{\mathcal{R}(t_{n})-kT} - y_{n} \right| &= \left| \frac{x+k_{n}-(\mathcal{R}(t_{n})-kT)(\hat{x}_{n}-\rho \tilde{\nu} (\hat{x}_{n}))}{\mathcal{R}(t_{n})-kT} \right| \\ 
&= \left| \frac{x+kT\hat{x}_{n}+ (k_{n} -x_{n})+(\mathcal{R}(t_{n})-kT)\rho \tilde{\nu} (\hat{x}_{n})}{\mathcal{R}(t_{n})-kT} \right| \\
&= \left| \rho \tilde{\nu} (\hat{x}_{n}) + \frac{x+kT\hat{x}_{n}-z_{n}}{\mathcal{R}(t_{n})-kT} \right|  .
\end{align*}
Let us call $ w_{n}:=\frac{x+kT\hat{x}_{n}-z_{n}}{\mathcal{R}(t_{n})-kT}$.
This goes to zero as $n$ goes to infinity. The last term in the above equality can be rewritten
$\vert \rho \tilde{\nu} (\hat{x}_{n})+w_{n} \vert= \sqrt{\rho^{2}+\vert w_{n} \vert (2\rho \tilde{\nu} (\hat{x}_{n})\cdot w_{n}/\vert w_{n} \vert+ \vert w_{n} \vert)}$. Now, observe that
$$
\lim_{n\to +\infty} 2\rho \tilde{\nu} (\hat{x}_{n})\cdot w_{n}/\vert w_{n} \vert+ \vert w_{n} \vert = 2 \rho \tilde{\nu} (\hat{x}) \cdot \frac{x+kT\hat{x}-z}{\vert x+kT\hat{x}-z \vert}.
$$
This limit is strictly negative. Indeed, if $x \in H_{T}+\{z\} $, then $(x-z)~\cdot~\tilde{\nu}(\hat{x})~<~-kT \hat{x}\cdot~\tilde{\nu}(\hat{x}) $.
Therefore, we have, for $n$ large enough, 
$$
 \left| \frac{x+k_{n}}{\mathcal{R}(t_{n})-kT} - y_{n} \right|< \rho, 
 $$
 which means that $\frac{x+k_{n}}{\mathcal{R}(t_{n})-kT} \in W $, by definition of $y_{n}$ and $\rho$. In other words, $ x~\in~(\mathcal{R}(t_{n})-kT)W-\{k_{n}\} $, which concludes this step.

\emph{Fourth step. Comparison.} \\
We now compare the function $u^{\star}$ built in the previous steps to the pulsating front traveling in the direction $\tilde{\nu}(\hat{x})$ with critical speed $c^{\star}(\tilde{\nu}(\hat{x}))$. Combining Lemma \ref{lemma1} and \eqref{inv}, we have that

$$
\liminf_{\delta\to+\infty}\inf_{\substack{t<0  \\  x\cdot \tilde{\nu}(\hat{x})< \gamma t -\delta  \\ x\in\Omega }} u^{\star}(t,x) \geq 1 ,
$$
with $\gamma := k \hat{x}\cdot \tilde{\nu}(\hat{x})>0$. Hence $u^{\star}$ satisfies the hypotheses of Lemma \ref{lemma2}. Observe that we have

\begin{equation*}
\begin{array}{lll}
\gamma &= k \hat{x}\cdot \tilde{\nu}(\hat{x}) \\ &=k \frac{\hat{x}}{\vert \hat{x} \vert }\cdot \tilde{\nu}(\hat{x})\tilde{w}\left(\frac{\hat{x}}{\vert \hat{x} \vert }\right) \\
 &< \frac{\hat{x}}{\vert \hat{x} \vert }\cdot \tilde{\nu}(\hat{x})w\left(\frac{\hat{x}}{\vert \hat{x} \vert }\right) \\ &\leq c^{\star}(\tilde{\nu}(\hat{x})), 
\end{array}
\end{equation*}
where the last inequality follows from the definition of $w$ in Theorem \ref{FG}.

Assume first that $f$ is of the combustion type \eqref{combustion}. Let $v$ be a pulsating traveling front in the direction $\nu(\hat{x})$, with critical speed $c^{\star}(\nu(\hat{x}))$. Up to a time translation, we normalize it so that $v(0,0)>u^{\star}(0,0)$. Then, $v$ satisfies the hypotheses of Lemma \ref{lemma2} (with $\eta = c^{\star}(\nu(\hat{x}))-\gamma $ in the hypotheses of Lemma \ref{lemma2}), giving $v\leq u^{\star}$, which is in contradiction with the fact that $v(0,0)>u^{\star}(0,0)$. Hence the contradiction.

Now, if the nonlinearity is of the monostable type \eqref{monostable}, we have to build a function $v$ satisfying \eqref{compmono} to apply Lemma \ref{lemma2}. This can be done exactly as in \cite[Proposition 2.6]{R1}, the fact that the domain is not $\mathbb{R}^{N}$ adds no difficulty here. 
This proves that $W$ is an asymptotic subset of spreading, and then, so is $\mathcal{W}$. Now, we show that it is an asymptotic superset of spreading.

\emph{Fifth step. Superset of spreading.} \\
Let $C$ be a closed set such that $\mathcal{W}\cap C = \emptyset $. Then, because $w$ is continuous, we can find $\varepsilon>0$ so that $\mathcal{W}_{\varepsilon} := \left\{ r\xi \ , \ \xi \in \mathbb{S}^{N-1},\ 0 \leq r \leq w(\xi) +\varepsilon\right\}$ is such that $\mathcal{W}_{\varepsilon} \cap C = \emptyset$. To prove that  $\mathcal{W}$ is an asymptotic superset of spreading, it is sufficient to show that $\sup_{x \in t\mathcal{W}_{\varepsilon}^{c}} u(t,x) \to 0$ as $t$ goes to $+\infty$. To do so, we take a sequence $(t_{n})_{n\in\mathbb{N}} \in (\mathbb{R}^{+})^{\mathbb{N}}$ such that $t_{n}$ goes to infinity as $n$ goes to infinity, and a sequence $x_{n} \in t_{n}\mathcal{W}_{\varepsilon}^{c}$ such that 
$$
u(t_{n},x_{n}) \geq \frac{1}{2}\sup_{x \in t_{n}\mathcal{W}_{\varepsilon}^{c}} u(t_{n},x).
$$
Up to extraction, we take $e\in \mathbb{S}^{N-1}$ such that $\frac{x_{n}}{\vert x_{n} \vert} \to e$ as $n$ goes to $+\infty$. Let $\xi \in \mathbb{S}^{N-1}$ be such that $w(e) = \frac{c^{\star}(\xi)}{\xi\cdot e}$, and let $v$ be a pulsating traveling front in the direction $\xi$ with critical speed $c^{\star}(\xi)$. Up to some translation in time, we can assume, thanks to the parabolic comparison principle, that $u(t,x)\leq v(t,x)$, $\forall t\geq0,\ \forall x \in \Omega$. Let us show that $v(t_{n},x_{n}) $ goes to zero as $n\to +\infty$.

We write $x_{n} := \left( \frac{x_{n}}{\vert x_{n} \vert} \cdot \xi\right)\vert x_{n} \vert \xi +d_{n}$, where $d_{n}$ is orthogonal to $\xi$. Because $\frac{x_{n}}{\vert x_{n} \vert} \to e$ as $n$ goes to $+\infty$, using the continuity of $w$, for $n$ large enough, we have 
$$
\left( \frac{x_{n}}{\vert x_{n} \vert} \cdot \xi\right)\vert x_{n} \vert \geq \left( \frac{x_{n}}{\vert x_{n} \vert} \cdot \xi\right)(w(\frac{x_{n}}{\vert x_{n} \vert}) +\varepsilon )t_{n}  \geq (c^{\star}(\xi) +(e\cdot \xi)\frac{\varepsilon}{2})t_{n}.
$$
 So, we get that, for $n\in \mathbb{N}$ large enough, there is some $\lambda_{n}$, such that $\lambda_{n} \geq c^{\star}(\xi) + (e\cdot \xi)\frac{\varepsilon}{2}$ and $x_{n} = \lambda_{n}\xi t_{n} + d_{n}$. Now, observe that the definition of the pulsating traveling fronts, Definition \ref{defPTF}, implies that $v(t_{n}, \lambda_{n}t_{n}\xi + d_{n}) \to 0$ as $n$ goes to $+\infty$, hence 
 $$
 \lim_{n\to+\infty}\frac{1}{2}\sup_{x \in t_{n}\mathcal{W}_{\varepsilon}^{c}} u(t_{n},x)\leq \lim_{n\to +\infty}u(t_{n},x_{n}) \leq \lim_{n\to +\infty}v(t_{n},x_{n}) = 0,
 $$
 which implies the result.

\end{proof}

Now that we dispose of the Freidlin-Gartner formula \eqref{FGformula}, we use it to answer Question \ref{question}.

\section{Estimates for the spreading speed}\label{speeds}

This whole section is dedicated to the proof of Theorem \ref{thwc}. We consider here the problem \eqref{PeriodicFKPP}, with nonlinearity $f$ independent of $x$ of the monostable or combustion type. In the following, for $f$ and $\Omega$ given, we denote  $c^{\star}$ and $w$ the critical speed of fronts and the speed of invasion respectively, for equation \eqref{PeriodicFKPP}.

The proof of Theorem \ref{thwc} is done in several steps : first, we show that $w\equiv c^{\star}$ is equivalent to saying that $w$ and $c^{\star}$ are actually constant. This is the object of Section \ref{section prop}. Then, we exhibit in Section \ref{geodesic estimates} some estimates on the spreading speed that take into account the geometry of the domain. Gathering all this, we will be able to prove Theorem \ref{thwc}.

\subsection{Comparison between $w$ and $c^{\star}$}\label{section prop}

This section is dedicated to proving that, if the critical speed of fronts $c^{\star}$ and the speed of invasion $w$ are everywhere equal, then they are constant. This uses only the Freidlin-Gartner formula \eqref{FGformula} proved in Section \ref{sectionFG}.

\begin{prop}\label{prop}
Assume that $\Omega$ is a smooth periodic domain satisfying \eqref{regomega} and that $f$ is a nonlinearity satisfying \eqref{hypf} of the monostable \eqref{monostable} or combustion \eqref{combustion} type. Assume that $c^{\star}\equiv w$. Then, the functions $w$ and $c^{\star}$ are constant.
\end{prop}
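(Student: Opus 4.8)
The plan is to exploit the Freidlin–Gartner formula $w(\xi)=\inf_{e\cdot\xi>0}\frac{c^{\star}(e)}{e\cdot\xi}$ and combine it with the hypothesis $c^{\star}\equiv w$ to derive a functional inequality forcing $c^{\star}$ (hence $w$) to be constant. First I would record the two elementary consequences of the formula that hold in general: (a) for every $e,\xi\in\mathbb{S}^{N-1}$ with $e\cdot\xi>0$ one has $w(\xi)\,(e\cdot\xi)\le c^{\star}(e)$, i.e. $w(\xi)\le \frac{c^{\star}(e)}{e\cdot\xi}$; and (b) the infimum defining $w(\xi)$ is attained (this is built into the statement of Theorem~\ref{FG} and used in its Fifth step, relying on the lower semicontinuity of $c^{\star}$ from \cite{AG,R1} and $\inf c^{\star}>0$). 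Taking $\xi=e$ in (a) gives $w(e)\le c^{\star}(e)$ for all $e$, which is of course consistent with $c^{\star}\equiv w$; the point is to use (a) with $\xi\neq e$.

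The core step is the following observation. Fix any $e\in\mathbb{S}^{N-1}$ and let $\xi_0$ realize $w(e)=\frac{c^{\star}(\xi_0)}{\xi_0\cdot e}$. Under the hypothesis $c^{\star}\equiv w$ we may apply (a) with the roles reversed: for any direction $d$ with $d\cdot\xi_0>0$,
\[
c^{\star}(\xi_0)=w(\xi_0)\le \frac{c^{\star}(d)}{d\cdot\xi_0}.
\]
Now I would feed $e$ itself into this: since $\xi_0\cdot e>0$ we get $c^{\star}(\xi_0)\le \frac{c^{\star}(e)}{e\cdot\xi_0}$, that is $c^{\star}(\xi_0)(e\cdot\xi_0)\le c^{\star}(e)$. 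Combining with $w(e)(e\cdot\xi_0)=c^{\star}(\xi_0)(e\cdot\xi_0)$ wait — more cleanly: from $w(e)=\frac{c^{\star}(\xi_0)}{\xi_0\cdot e}$ we get $c^{\star}(\xi_0)=w(e)\,(\xi_0\cdot e)\le w(e)=c^{\star}(e)$, since $\xi_0\cdot e\le 1$; and symmetrically, running the argument starting from $\xi_0$ and its minimizer, one obtains $c^{\star}(e)\le c^{\star}(\xi_0)$ unless $\xi_0=e$. The honest way to close this is: the inequality $c^{\star}(\xi_0)=w(e)(\xi_0\cdot e)\le c^{\star}(e)=w(e)$ with equality $c^{\star}\equiv w$ forces $\xi_0\cdot e=1$, i.e. $\xi_0=e$, \emph{provided} $w(e)>0$, which holds because $\inf c^{\star}>0$. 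Hence for every $e$ the minimizer in the Freidlin–Gartner formula is $e$ itself, which means $\frac{c^{\star}(\xi)}{\xi\cdot e}\ge c^{\star}(e)$ for all $\xi$ with $\xi\cdot e>0$, i.e.
\[
c^{\star}(\xi)\ge c^{\star}(e)\,(\xi\cdot e)\qquad\text{for all }e,\xi\in\mathbb{S}^{N-1},\ \xi\cdot e>0.
\]
Swapping $e$ and $\xi$ gives $c^{\star}(e)\ge c^{\star}(\xi)(\xi\cdot e)$ as well, so $c^{\star}(\xi)\ge c^{\star}(\xi)(\xi\cdot e)^2$, harmless, but adding the two: $c^{\star}(\xi)+c^{\star}(e)\ge (c^{\star}(e)+c^{\star}(\xi))(\xi\cdot e)$, forcing $\xi\cdot e=1$ whenever... no. The clean conclusion: from $c^{\star}(\xi)\ge c^{\star}(e)(\xi\cdot e)$ and $c^{\star}(e)\ge c^{\star}(\xi)(\xi\cdot e)$ we get $c^{\star}(\xi)\ge c^{\star}(\xi)(\xi\cdot e)^2$; letting $\xi\cdot e\to 1$ (i.e. $\xi\to e$) this is vacuous, but taking $\xi=-e'$... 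Instead I would argue directly: fix $e$ and let $\xi$ range over a neighborhood; the inequality $c^{\star}(\xi)\ge c^{\star}(e)(\xi\cdot e)$ combined with its symmetric partner gives, for $\xi,e$ close, $c^{\star}(\xi)/c^{\star}(e)\ge \xi\cdot e$ and $\le 1/(\xi\cdot e)$, so $c^{\star}$ is continuous and in fact $c^{\star}(\xi)= c^{\star}(e)$ by running the chain over a path of directions on the sphere where consecutive dot products are $1$ in the limit — more precisely, connectedness of $\mathbb{S}^{N-1}$ ($N\ge 2$; for $N=1$ there is nothing to prove) together with the two-sided bound $c^{\star}(e)(\xi\cdot e)\le c^{\star}(\xi)\le c^{\star}(e)/(\xi\cdot e)$ valid for $\xi\cdot e>0$ shows $c^{\star}$ is locally constant, hence constant. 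Then $w\equiv c^{\star}$ is the same constant.

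The main obstacle is the bookkeeping in the middle paragraph: extracting from ``the minimizer $\xi_0$ of the Freidlin–Gartner formula at $e$ must equal $e$'' the clean two-sided estimate $c^{\star}(e)(\xi\cdot e)\le c^{\star}(\xi)\le c^{\star}(e)/(\xi\cdot e)$, and then converting that into ``constant'' via a connectedness/covering argument on $\mathbb{S}^{N-1}$. One must also be slightly careful that the infimum in \eqref{FGformula} is attained (so that ``the minimizer'' makes sense) and that $c^{\star}>0$ everywhere (so no division by zero and so that equalities can be divided through); both are available from the results quoted before the statement (lower semicontinuity of $c^{\star}$ from \cite{AG,R1}, positivity from $\inf c^{\star}>0$ used in Lemma~\ref{prop1}). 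Everything else is elementary.
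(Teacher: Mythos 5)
Your starting point is the same as the paper's: under $c^{\star}\equiv w$ the Freidlin--Gartner formula gives, for all $e,\xi\in\mathbb{S}^{N-1}$ with $e\cdot\xi>0$, $c^{\star}(e)=w(e)\le \frac{c^{\star}(\xi)}{e\cdot\xi}$, hence the two-sided bound $c^{\star}(e)(e\cdot\xi)\le c^{\star}(\xi)\le c^{\star}(e)/(e\cdot\xi)$. Note this needs neither attainment of the infimum nor any identification of the minimizer; in fact your intermediate claim that the minimizer $\xi_{0}$ for $w(e)$ must equal $e$ is circular as written: from $c^{\star}(\xi_{0})=w(e)(\xi_{0}\cdot e)$ you can only force $\xi_{0}\cdot e=1$ if you already know $c^{\star}(\xi_{0})=c^{\star}(e)$, which is precisely the constancy you are trying to prove. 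Fortunately that detour is unnecessary, since the inequality you extract from it follows in one line from the definition of the infimum.

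The genuine gap is the final step. The two-sided bound does \emph{not} show that $c^{\star}$ is locally constant: it only says $|\log c^{\star}(\xi)-\log c^{\star}(e)|\le -\log\cos\theta$, with $\theta$ the angle between $e$ and $\xi$, which is a continuity-type estimate; ``connectedness/covering'' alone cannot upgrade such an estimate to constancy (a bound of the same qualitative shape, say $|\log c^{\star}(\xi)-\log c^{\star}(e)|\le\theta$, certainly does not force constancy). What makes the conclusion true is that $-\log\cos\theta=O(\theta^{2})$, and this must be exploited quantitatively by chaining: given $\xi_{0},\xi$ with angle $\omega$, subdivide the arc into $M$ equal steps $\xi_{0},\dots,\xi_{M}=\xi$ with $\xi_{k}\cdot\xi_{k+1}=\cos(\omega/M)$, apply the one-sided inequality at each step to get $w(\xi_{0})\le w(\xi)\,\cos(\omega/M)^{-M}$, and let $M\to\infty$ using $\cos(\omega/M)^{-M}\to 1$ (indeed $\cos(\omega/M)^{-M}\sim 1+\omega^{2}/(2M)$), which yields $w(\xi_{0})\le w(\xi)$ and, by symmetry, equality. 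This is exactly how the paper closes the argument; your plan gestures at ``consecutive dot products equal to $1$ in the limit'' but never verifies that the accumulated multiplicative loss over the $M$ steps tends to $1$, which is the whole point. With that computation inserted (and the circular minimizer step removed), your argument becomes the paper's proof.
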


\begin{proof}
Because of the hypotheses on $\Omega$ and $f$, we can apply Theorem \ref{FG} to get that $\forall e \in \mathbb{S}^{N-1}$, $w(e) = \inf_{\xi \cdot e>0} \frac{c^{\star}(\xi)}{\xi \cdot e}$. Assume that $w \equiv c^{\star}$ and take $\xi_{0},\,\xi \in\mathbb{S}^{N-1}$ so that $\xi_{0}\cdot\xi>0$, and let $\omega$ be the angle between those two vectors. Let us take $M\in\mathbb{N}$. We define a sequence $(\xi_{k})_{k\in \llbracket 0 , M \rrbracket}\in \mathbb{S}^{N-1}$ to be equidistributed on the arc joining $\xi_{0}$ to $\xi$ on the sphere, i.e.,  $\xi_{k}\cdot\xi_{k+1}=\cos(\frac{\omega}{M})$ and $\xi_{M} = \xi$. Then, we have 
\[
w(\xi_{0})\leq w(\xi_{1})\frac{1}{\xi_{0}\xi_{1}}\leq w(\xi_{2})\frac{1}{\xi_{2}\xi_{1}}\frac{1}{\xi_{1}\xi_{0}}.
\]
Iterating and using that $\xi_{k}\cdot\xi_{k+1}=\cos(\frac{\omega}{M})$ , we get:
\[
w(\xi_{0})\leq w(\xi)\prod_{k = 0}^{M-1}\frac{1}{\xi_{k+1}\xi_{k}} =  w(\xi)\frac{1}{\cos(\frac{\omega}{M})^{M}}.
\]
Because $\frac{1}{\cos(\frac{\omega}{M})^{M}}\sim 1+ \frac{\omega^{2}}{2M}$ when $M$ goes to $ +\infty$, passing to the limit yields:
\[
w(\xi_{0}) \leq w(\xi).
\]
Inverting the roles of $\xi_{0}$ and $\xi$, we get $w(\xi_{0}) = w(\xi)$. Hence, $w$ is constant, and so is $c^{\star}$.

\end{proof}

Observe that, in the course of the proof, we did not use the particular form of equation \eqref{PeriodicFKPP}, only the Freidlin-Gartner formula, hence Proposition \eqref{prop} holds true also for the general equation \eqref{eqgen}.

As mentioned in the introduction, we shall use this result to build domains where $c^{\star} \not\equiv w$. Indeed, Proposition \ref{prop} reduces the problem to finding domains where $w$ or $c^{\star}$ are not constants. Intuitively, it seems that, if in a certain direction $e$, there are lot of ``obstacles", then the speeds $w$ and $c^{\star}$ should be ``small". On the contrary, if on a certain direction, there are few obstacles, then the speeds should be ``large". Hence, if the domain $\Omega$ is very ``obstructed" in some direction and not in an other, then the speeds should not be constants, and so they would be different.

To build such domains is actually quite easy if $f$ is KPP and if the dimension is greater or equal to $3$. In this case, we will see in the next Section \ref{domain invariant} that we can use domains invariants in one direction. If the nonlinearity is not KPP or if the dimension is equal to $2$, things are more involved. To overcome this difficulty, we introduce estimates for $w$ that do take into account the ``obstructions" of the domain. This is done in Section \ref{geodesic estimates}.

\subsection{Invasion in domains that are invariant in one direction}\label{domain invariant}

In this subsection, $f$ is a KPP nonlinearity independent of $x$ and $\Omega$ is invariant in the direction $e \in \mathbb{S}^{N-1}$, i.e., for all $\lambda \in \mathbb{R}$, we have $\Omega + \lambda e =\Omega$. Let us answer Question \ref{question} in this specific case by proving the following:

\begin{prop}\label{w=c}

Let $\Omega$ be a periodic domain in $\mathbb{R}^{N}$, $N\geq 3$, satisfying \eqref{regomega} and invariant in the direction $e \in \mathbb{S}^{N-1}$. Let $f$ satisfying \eqref{hypf} be a KPP nonlinearity independent of $x$. Denoting $c^{\star}$ and $w$ the critical speed of fronts and the speed of invasion respectively for problem \eqref{PeriodicFKPP}, we have

$$
w\equiv c^{\star}  \iff \Omega = \mathbb{R}^{N}.
$$
\end{prop}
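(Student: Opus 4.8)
The plan is to prove the equivalence in Proposition \ref{w=c} by establishing both implications, the reverse one being trivial (if $\Omega = \mathbb{R}^N$ then the equation is homogeneous and $c^\star \equiv w \equiv 2\sqrt{f'(0)}$). The content is in the forward direction, or rather its contrapositive: if $\Omega$ is invariant in the direction $e$ but $\Omega \neq \mathbb{R}^N$, then $c^\star \not\equiv w$. By Proposition \ref{prop}, it suffices to show that $c^\star$ (equivalently $w$) is \emph{not} constant. I would split this into two observations: first, that invariance in the direction $e$ forces $c^\star(e) = w(e)$ and moreover pins down this common value; second, that in \emph{some other} direction the speed must be strictly smaller, because the cross-section is a genuinely lower-dimensional periodic domain with holes, whose principal eigenvalue beats the full-space one.

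For the first observation, I would invoke the result of \cite{BHN1} quoted in the introduction (just after the statement of Theorem \ref{thwc}): if $\Omega$ is invariant in the direction $e_{inv}$, then $c^\star(e_{inv}) = w(e_{inv})$. Combined with the fact (KPP case) that $c^\star(e) = \min_{\lambda>0} k(\lambda e)/\lambda$ where $k$ is the periodic principal eigenvalue of the relevant operator with the oblique boundary condition, I would argue that invariance in the direction $e$ makes the eigenvalue problem effectively one-dimensional along $e$: one looks for eigenfunctions depending only on the $(N-1)$ transverse variables, and the principal eigenvalue of $-\Delta$ with Neumann condition on the cross-section $\omega := \Omega \cap e^\perp$ is zero (constants are eigenfunctions), so $k(\lambda e) = \lambda^2 + f'(0)$ and hence $c^\star(e) = 2\sqrt{f'(0)}$, the same as in $\mathbb{R}^N$. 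So along the axis of invariance, the domain behaves like free space.

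The second, and main, step is to exhibit a direction $\xi$ transverse to $e$ with $c^\star(\xi) < 2\sqrt{f'(0)}$. Since $\Omega \neq \mathbb{R}^N$ and $\Omega$ is invariant in the direction $e$, the set $\Omega^c$ is a nonempty periodic ``cylinder'' over a nonempty periodic obstacle $K \subset e^\perp \cong \mathbb{R}^{N-1}$ (here $N-1 \geq 2$ is used so that $\mathbb{R}^{N-1}$ minus a periodic compact set can still be connected, which is why the hypothesis $N \geq 3$ appears). For a direction $\xi \in \mathbb{S}^{N-1}$ orthogonal to $e$, the front and spreading problems decouple and reduce to the analogous problem on the $(N-1)$-dimensional periodic domain $\omega = \Omega \cap e^\perp$, which is a genuine domain with holes. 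The key point is then: the periodic principal eigenvalue $k_\omega(\lambda \xi)$ of the operator on $\omega$ with Neumann (for $\partial\Omega$) plus oblique boundary conditions is \emph{strictly larger} than $\lambda^2 + f'(0)$ for every $\lambda > 0$ — this is a standard strict monotonicity of the principal eigenvalue under adding an obstacle with Neumann condition, proved e.g. by a test-function/variational argument using that the principal eigenfunction in free space (a pure exponential) cannot satisfy the Neumann condition on $\partial\omega$. Consequently $c^\star(\xi) = \min_{\lambda>0} k_\omega(\lambda\xi)/\lambda > 2\sqrt{f'(0)} = c^\star(e)$, so $c^\star$ is nonconstant, and by Proposition \ref{prop} this forces $c^\star \not\equiv w$.

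The main obstacle I anticipate is making the ``strict inequality of principal eigenvalues'' rigorous in the right generality: one must check that the reduction of the front/eigenvalue problem on $\Omega$ to the cross-section $\omega$ for transverse directions $\xi$ is valid (this follows from uniqueness of the periodic principal eigenfunction and the product structure of the domain), and then that adding the Neumann obstacle strictly increases $k$. A clean way to get strictness: if $k_\omega(\lambda\xi) = \lambda^2 + f'(0)$ then by uniqueness the periodic principal eigenfunction on $\omega$ would have to be (the periodic part of) $e^{-\lambda \xi \cdot x}$, i.e. a nowhere-vanishing function that is actually the restriction of a free-space eigenfunction; but such a function does not satisfy $\partial_\nu(\cdot) = \lambda(\nu\cdot\xi)(\cdot)$ on $\partial\omega$ unless $\partial\omega = \emptyset$, contradiction. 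I would also need to double-check the continuity/lower-semicontinuity of $c^\star$ only insofar as it is used in Proposition \ref{prop}, which is already granted. Finally I would note that this argument is essentially the rigorous version of the heuristic ``obstructed directions are slower'', specialized to the simplest obstructing geometry.
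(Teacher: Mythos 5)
Your overall skeleton matches the paper's: the reverse implication is trivial, and by Proposition \ref{prop} the forward implication reduces to showing that $c^{\star}$ is not constant when $\Omega\neq\mathbb{R}^{N}$; your first observation, that invariance in the direction $e$ forces $\nu\cdot e=0$ on $\partial\Omega$, hence $k(\lambda e)=\lambda^{2}+f'(0)$ and $c^{\star}(e)=2\sqrt{f'(0)}$, is also fine. The paper, however, concludes by simply citing the theorem of Berestycki--Hamel--Nadirashvili recalled right after the proposition: $c^{\star}(\xi)\leq 2\sqrt{f'(0)}$ for \emph{every} direction, with equality if and only if $\Omega$ is invariant in the direction $\xi$; since $\Omega\neq\mathbb{R}^{N}$ there is a direction of non-invariance, whence $c^{\star}$ is nonconstant.

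Your decisive second step is where the argument breaks. You claim that for a transverse direction $\xi$ the cross-sectional eigenvalue satisfies $k_{\omega}(\lambda\xi)>\lambda^{2}+f'(0)$ for all $\lambda>0$, hence $c^{\star}(\xi)>2\sqrt{f'(0)}$. This inequality is in the wrong direction and is in fact false: it contradicts the very theorem of \cite{BHN1} quoted in the paper, which gives $c^{\star}(\xi)\leq2\sqrt{f'(0)}$ for problem \eqref{PeriodicFKPP} (Neumann obstacles can only slow KPP propagation, never accelerate it; this is also consistent with Proposition \ref{geo}, where $C_{\Omega}\leq1$). The mechanism you invoke, a ``standard strict monotonicity of the principal eigenvalue under adding an obstacle with Neumann condition,'' does not exist: there is no domain monotonicity for Neumann or oblique-derivative principal eigenvalues, unlike the Dirichlet case. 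Your fallback strictness argument is also gappy: if $k_{\omega}(\lambda\xi)=\lambda^{2}+f'(0)$, uniqueness of the principal eigenfunction applies \emph{within} the problem on $\omega$ and does not force that eigenfunction to be the restriction of the free-space exponential; it only shows the exponential is not an admissible eigenfunction, which does not exclude the coincidence of the two eigenvalues. What is really needed is the nontrivial ``equality implies invariance'' half of the \cite{BHN1} theorem (equivalently, strict inequality $c^{\star}(\xi)<2\sqrt{f'(0)}$ in every non-invariant direction), which you neither cite nor prove; once that ingredient is supplied, the rest of your argument coincides with the paper's.
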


This comes directly by combining our Proposition \ref{prop} with the following result from \cite{BHN1}:

\begin{theorem}
Let $c^{\star}$ be the critical speed of fronts for the problem \eqref{PeriodicFKPP} with $f$ KPP independent of $x$. Then $c^{\star}(e)\leq2\sqrt{f^{\prime}(0)}$ and the equality holds
if and only if $\Omega$ is invariant in the direction $e$.
\end{theorem}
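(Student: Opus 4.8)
The plan is to reduce the statement to the integral identity for the periodic principal eigenvalue underlying the Freidlin--Gärtner formula. Recall from \cite{BHN1}, as quoted in the introduction, that for the homogeneous equation \eqref{PeriodicFKPP} one has $c^{\star}(e)=\min_{\lambda>0}\frac{k(\lambda e)}{\lambda}$, where $k(\lambda e)$ is the periodic principal eigenvalue of $L_{\lambda e}\phi=\Delta\phi-2\lambda\,e\cdot\nabla\phi+(\lambda^{2}+f'(0))\phi$ in $\Omega$ with the boundary condition $\nu\cdot\nabla\phi=\lambda(e\cdot\nu)\phi$ on $\partial\Omega$, the eigenfunction $\phi=\phi_{\lambda e}$ being positive, periodic and (by \cite{BH1}) smooth up to $\partial\Omega$. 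The first and central step is to establish
\[
k(\lambda e)=\lambda^{2}+f'(0)-\frac{\int_{\mathcal{C}\cap\Omega}|\nabla\phi_{\lambda e}|^{2}}{\int_{\mathcal{C}\cap\Omega}\phi_{\lambda e}^{2}}.
\]
To get this I would multiply the eigenvalue equation by $\phi_{\lambda e}$ and integrate over the fundamental cell $\mathcal{C}\cap\Omega$: integrating $\phi\,\Delta\phi$ by parts gives $-\int|\nabla\phi|^{2}$ plus the boundary term $\int_{\partial(\mathcal{C}\cap\Omega)}\phi\,\partial_{\nu}\phi$, while the drift term equals $-\lambda\int_{\mathcal{C}\cap\Omega}\nabla\cdot(e\,\phi^{2})$ and also collapses to a boundary term. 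On $\partial\mathcal{C}$ every contribution cancels because $\phi$ and $\Omega$ are periodic, so opposite faces of $\mathcal{C}$ carry opposite normals and exactly translated integrands; on $\partial\Omega$ the Robin condition turns $\phi\,\partial_{\nu}\phi$ into $\lambda(e\cdot\nu)\phi^{2}$, which cancels precisely the surviving boundary term from the drift. What remains is the displayed identity.

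From this identity $k(\lambda e)\le\lambda^{2}+f'(0)$ for all $\lambda>0$, with equality for a given $\lambda$ if and only if $\phi_{\lambda e}$ is constant. Choosing $\lambda=\sqrt{f'(0)}$ then yields
\[
c^{\star}(e)\le\frac{k\big(\sqrt{f'(0)}\,e\big)}{\sqrt{f'(0)}}\le\frac{2f'(0)}{\sqrt{f'(0)}}=2\sqrt{f'(0)}.
\]
If $\Omega$ is invariant in the direction $e$, then $e\cdot\nu\equiv0$ on $\partial\Omega$, so $\phi\equiv1$ solves the eigenvalue problem for every $\lambda$; hence $k(\lambda e)=\lambda^{2}+f'(0)$ and $c^{\star}(e)=\min_{\lambda>0}\frac{\lambda^{2}+f'(0)}{\lambda}=2\sqrt{f'(0)}$. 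Conversely, suppose $\Omega$ is not invariant in the direction $e$. A nonzero constant can satisfy the Robin condition only if $e\cdot\nu\equiv0$ on all of $\partial\Omega$, and I would argue that this already forces invariance: the constant field $X=e$ is then everywhere tangent to the $C^{3}$ hypersurface $\partial\Omega$, so $\partial\Omega$ is invariant under the flow $x\mapsto x+te$; for small $t$, $\Omega+te$ is therefore an open connected set with boundary $\partial\Omega$ that meets $\Omega$, whence $\Omega+te=\Omega$, and the group property extends this to all $t$. So under non-invariance $\phi_{\lambda e}$ is non-constant for every $\lambda$, the inequality $k(\sqrt{f'(0)}\,e)<2f'(0)$ is strict, and therefore $c^{\star}(e)<2\sqrt{f'(0)}$.

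The first step is an elementary integration by parts, so the only delicate points are bookkeeping ones. One is checking that the $\partial\mathcal{C}$ boundary integrals genuinely cancel: this uses the periodicity of $\Omega$ itself (not merely of $\phi$), so that opposite faces of $\mathcal{C}$ meet $\Omega$ in exact translates of each other. The other is the geometric implication ``$e\cdot\nu\equiv0$ on $\partial\Omega\Rightarrow\Omega+te=\Omega$'', which I expect to be the main obstacle to writing carefully; it relies on the standard fact that a closed $C^{1}$ submanifold to which a vector field is everywhere tangent is invariant under that field's flow, applied here to the smooth complete field $X=e$ and the $C^{3}$ hypersurface $\partial\Omega$. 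The existence, simplicity and boundary regularity of the positive periodic eigenfunction $\phi_{\lambda e}$, as well as the eigenvalue characterization of $c^{\star}(e)$ in the periodic domain, are taken from \cite{BH1,BHN1}.
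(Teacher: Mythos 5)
Your argument is correct, and there is nothing in the paper to compare it against: the paper does not prove this statement, it simply quotes it from \cite{BHN1} (it is used as a black box, combined with Proposition \ref{prop}, to settle the KPP case of Question \ref{question} for cylindrical-type domains). Your route -- multiply the eigenvalue equation for $\phi_{\lambda e}$ by $\phi_{\lambda e}$, integrate over $\mathcal{C}\cap\Omega$, use periodicity to kill the cell-face terms and the Robin condition $\partial_\nu\phi=\lambda(e\cdot\nu)\phi$ to cancel the $\partial\Omega$ term coming from the drift, obtaining $k(\lambda e)=\lambda^{2}+f'(0)-\int|\nabla\phi_{\lambda e}|^{2}/\int\phi_{\lambda e}^{2}$ -- is exactly the kind of eigenvalue computation underlying the original proof in \cite{BHN1}, and together with $c^{\star}(e)=\min_{\lambda>0}k(\lambda e)/\lambda$ (valid in the KPP case, as recalled in the introduction) it gives both the bound and the equality characterization. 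Two small points deserve a sentence in a careful write-up: (i) when $e\cdot\nu\equiv0$ and $\phi\equiv1$ solves the problem, you should invoke uniqueness of the periodic principal eigenvalue admitting a positive eigenfunction to conclude $k(\lambda e)=\lambda^{2}+f'(0)$; (ii) $\mathcal{C}\cap\Omega$ need not be connected, so from $\int_{\mathcal{C}\cap\Omega}|\nabla\phi|^{2}=0$ you should pass to constancy of $\phi$ on all of the connected set $\Omega$ via periodicity of $\nabla\phi$ before using the boundary condition to get $e\cdot\nu\equiv0$. Your geometric step ($e\cdot\nu\equiv0$ on $\partial\Omega$ implies $\Omega+te=\Omega$, via tangency of the constant field to the closed $C^{3}$ hypersurface and the fact that $\Omega$ is a connected component of $\mathbb{R}^{N}\setminus\partial\Omega$) is sound.
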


If $\Omega$ is a periodic domain satisfying hypothesis \eqref{regomega} and invariant in a direction, $\Omega \neq \mathbb{R}^{N}$, then this Theorem implies that $c^{\star}$ is not constant (as function of the direction). Then, Proposition \ref{prop} implies that $c^{\star}\not\equiv w$. This answers Question \ref{question} in the special case where $f$ is KPP and the dimension greater than $3$. The general setting is more involved and is addressed after.

However, when considering domains invariant in one direction, we can give further informations about the shape of the asymptotic set of spreading $\mathcal{W}$. The next result shows that, if $\Omega$ is invariant in the direction $e$, then the spreading speed in a direction orthogonal to $e$ does only depend on the part of the domain orthogonal to $e$. More precisely, we have

\begin{prop}
Let $\Omega$ be a periodic domain satisfying \eqref{regomega}, invariant in the direction $e \in \mathbb{S}^{N-1}$. Let $\mathcal{W}$ be the asymptotic set of spreading of equation \eqref{PeriodicFKPP} set on $\Omega$ with $f$ satisfying \eqref{hypf} and such that $u \mapsto \frac{f(u)}{u}$ is decreasing (this implies that $f$ is KPP). Let $\mathcal{H}$ be the hyperplane in $\mathbb{R}^{N}$ orthogonal to $e$. Then, if $\mathcal{W}_{\mathcal{H}\cap \Omega}$ is the asymptotic set of spreading for the same equation restricted to $\mathcal{H}\cap \Omega$, i.e., 

\begin{equation}\label{equation restricted}
\left\{
\begin{array}{rrll}
\partial_{t}u - \Delta u &=&  f(u), &\quad t>0 , \ x\in \mathcal{H}\cap\Omega, \\
\partial_{\nu^{\prime}} u &=& 0,  &\quad t>0 ,\ x\in \partial (\mathcal{H}\cap\Omega),
\end{array}
\right.
\end{equation}
where $\nu^{\prime} \in \mathbb{S}^{N-2}$ denotes the exterior normal to $\mathcal{H}\cap\Omega$,
 we have
$$
\mathcal{W}_{\mathcal{H}\cap\Omega} = \mathcal{W}\cap\mathcal{H}.
$$
\end{prop}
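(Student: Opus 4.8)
The plan is to show the two inclusions $\mathcal{W}_{\mathcal{H}\cap\Omega} \subset \mathcal{W}\cap\mathcal{H}$ and $\mathcal{W}\cap\mathcal{H} \subset \mathcal{W}_{\mathcal{H}\cap\Omega}$, using the Freidlin-Gartner formula of Theorem \ref{FG} on both domains. Since $f$ is KPP with $u\mapsto f(u)/u$ decreasing, the critical speeds have the linear characterization $c^\star(e') = \min_{\lambda>0} k_\Omega(\lambda e')/\lambda$, where $k_\Omega(\xi)$ is the periodic principal eigenvalue of the linearized operator $-\Delta - 2\xi\cdot\nabla + (|\xi|^2 + f'(0))$ on the periodicity cell of $\Omega$ with the Neumann-type boundary condition $\partial_\nu u = (\nu\cdot\xi)u$ on $\partial\Omega$; the analogous quantity $k_{\mathcal{H}\cap\Omega}(\xi)$ is defined for the restricted domain with $\xi \in \mathcal{H}$. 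The first and main step is the \emph{eigenvalue comparison}: for every direction $\xi \in \mathbb{S}^{N-1}\cap\mathcal{H}$ one has $k_\Omega(\xi) = k_{\mathcal{H}\cap\Omega}(\xi)$, hence $c^\star_\Omega(\xi) = c^\star_{\mathcal{H}\cap\Omega}(\xi)$ for directions $\xi$ orthogonal to $e$.

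To prove this eigenvalue identity I would exploit the invariance of $\Omega$ in the direction $e$. Because $\Omega + \lambda e = \Omega$ for all $\lambda$, the domain splits as $\Omega = (\mathcal{H}\cap\Omega)\times\mathbb{R}e$ after a linear change of coordinates, and the coefficients of the linearized operator with drift parameter $\xi \perp e$ do not involve the $e$-variable. On the one hand, any eigenfunction $\varphi$ on $\mathcal{H}\cap\Omega$ for $k_{\mathcal{H}\cap\Omega}(\xi)$ extends to a function on $\Omega$ constant in the $e$-direction, which is admissible (periodic in the remaining directions, satisfies the boundary condition since $\nu\cdot\xi$ agrees with $\nu'\cdot\xi$ because $\xi\perp e$) and is an eigenfunction with the same eigenvalue, giving $k_\Omega(\xi)\le k_{\mathcal{H}\cap\Omega}(\xi)$. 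On the other hand, integrating the eigenfunction equation on $\Omega$ against a test function depending only on the $\mathcal{H}$-variables, or equivalently averaging the principal eigenfunction on $\Omega$ over the $e$-direction (which preserves positivity and the equation, by a Jensen/convexity argument together with uniqueness of the principal eigenfunction up to scaling), yields $k_{\mathcal{H}\cap\Omega}(\xi)\le k_\Omega(\xi)$. Combining, $k_\Omega(\xi) = k_{\mathcal{H}\cap\Omega}(\xi)$ for $\xi\perp e$.

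With the identity $c^\star_\Omega(\xi) = c^\star_{\mathcal{H}\cap\Omega}(\xi)$ for all $\xi\in\mathbb{S}^{N-1}\cap\mathcal{H}$ in hand, I would close the argument via Theorem \ref{FG}. For a direction $\zeta\in\mathbb{S}^{N-2}\subset\mathcal{H}$, the Freidlin-Gartner formula on $\mathcal{H}\cap\Omega$ reads $w_{\mathcal{H}\cap\Omega}(\zeta) = \inf_{e'\cdot\zeta>0,\,e'\in\mathcal{H}}c^\star_{\mathcal{H}\cap\Omega}(e')/(e'\cdot\zeta)$, whereas on $\Omega$ it is $w_\Omega(\zeta) = \inf_{e'\cdot\zeta>0,\,e'\in\mathbb{S}^{N-1}}c^\star_\Omega(e')/(e'\cdot\zeta)$. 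Since for $e'\notin\mathcal{H}$ we have $c^\star_\Omega(e')\le 2\sqrt{f'(0)}$ while, writing $e' = \cos\phi\,\zeta' + \sin\phi\, e$ with $\zeta'\in\mathcal{H}$, the competitor built from $\zeta'$ gives a smaller ratio — more precisely, using the invariance of $\Omega$ in direction $e$ one shows $c^\star_\Omega(e') \le \cos\phi\cdot c^\star_{\mathcal{H}\cap\Omega}(\zeta') + |\sin\phi|\cdot 2\sqrt{f'(0)}$ is never advantageous when testing against $\zeta\perp e$, because $e'\cdot\zeta = \cos\phi\,(\zeta'\cdot\zeta)$ and the extra $e$-component only degrades the ratio — the infimum defining $w_\Omega(\zeta)$ is attained among directions $e'\in\mathcal{H}$. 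Hence $w_\Omega(\zeta) = w_{\mathcal{H}\cap\Omega}(\zeta)$ for all $\zeta\in\mathbb{S}^{N-2}$, which is exactly $\mathcal{W}\cap\mathcal{H} = \mathcal{W}_{\mathcal{H}\cap\Omega}$ (noting $\mathcal{W}\cap\mathcal{H}$ is the radial region in $\mathcal{H}$ bounded by $\zeta\mapsto w_\Omega(\zeta)$, since $\mathcal{W}$ is star-shaped).

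The main obstacle is the eigenvalue comparison step, in particular the direction $k_{\mathcal{H}\cap\Omega}(\xi)\le k_\Omega(\xi)$: one must verify that averaging the principal eigenfunction of the operator on $\Omega$ over the invariant direction $e$ produces a valid positive eigenfunction on $\mathcal{H}\cap\Omega$ with no larger eigenvalue. This requires care because the averaged function must still satisfy the oblique boundary condition and be periodic, and one needs to rule out that the average vanishes or that the eigenvalue strictly decreases — here the uniqueness (up to scalar multiple) of the periodic principal eigenfunction, together with the fact that a translate in $e$ of the eigenfunction solves the same problem and hence equals the original, shows the eigenfunction was already independent of $e$, making the comparison an equality from the start. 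The remaining step — that the Freidlin-Gartner infimum for $\zeta\perp e$ is realized in $\mathcal{H}$ — is a short convexity computation using $c^\star_\Omega(e')\le c^\star_\Omega(\text{proj of }e'\text{ on }\mathcal{H})$ type bounds that follow from the invariance of $\Omega$ in the direction $e$ and the monotonicity properties already recorded.
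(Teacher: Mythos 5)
Your first step (for directions $\xi$ orthogonal to $e$, the principal eigenvalues on $\Omega$ and on $\mathcal{H}\cap\Omega$ coincide, hence $c^{\star}_{\Omega}(\xi)=c^{\star}_{\mathcal{H}\cap\Omega}(\xi)$ there) is essentially sound: since the coefficients, the domain and the oblique boundary condition are invariant under all translations in the direction $e$ (the normal $\nu$ has no $e$-component), uniqueness of the periodic principal eigenfunction forces it to be independent of the $e$-variable, and the restriction/extension argument goes through. This already gives the easy inequality $w_{\Omega}(\zeta)\le w_{\mathcal{H}\cap\Omega}(\zeta)$, since the Freidlin--G\"artner infimum for $\Omega$ runs over a larger set of directions.

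The genuine gap is in your final step, the reduction of the infimum in \eqref{FGformula} to directions $e'\in\mathcal{H}$. To show that an out-of-plane direction $e'=\cos\phi\,\zeta'+\sin\phi\,e$ is ``never advantageous'' when testing against $\zeta\perp e$, you must bound $c^{\star}_{\Omega}(e')$ \emph{from below} (e.g.\ by $\cos\phi\, c^{\star}_{\mathcal{H}\cap\Omega}(\zeta')$, which combined with $e'\cdot\zeta=\cos\phi\,(\zeta'\cdot\zeta)$ would do the job); instead you quote upper bounds, namely $c^{\star}_{\Omega}(e')\le\cos\phi\,c^{\star}_{\mathcal{H}\cap\Omega}(\zeta')+|\sin\phi|\,2\sqrt{f'(0)}$ and later ``$c^{\star}_{\Omega}(e')\le c^{\star}_{\Omega}(\mathrm{proj}_{\mathcal{H}}e')$ type bounds'', which are logically useless for excluding out-of-plane minimizers and are anyway not proved. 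The step can be repaired within your own framework: the same translation-invariance/uniqueness argument shows that for a general $\xi=\xi_{H}+\mu e$ with $\xi_{H}\in\mathcal{H}$ the principal eigenfunction is again independent of the $e$-variable, whence $k_{\Omega}(\xi_{H}+\mu e)=k_{\mathcal{H}\cap\Omega}(\xi_{H})+\mu^{2}\ge k_{\mathcal{H}\cap\Omega}(\xi_{H})$; using the eigenvalue form \eqref{FGoriginal} of the spreading speed and $\xi\cdot\zeta=\xi_{H}\cdot\zeta$, dropping the $e$-component of $\xi$ can only decrease the ratio, which yields $w_{\Omega}(\zeta)\ge w_{\mathcal{H}\cap\Omega}(\zeta)$. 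Note also that, once completed, your route (linear determinacy plus eigenvalue splitting) is different from the paper's proof, which never touches eigenvalues: the paper proves $w_{\Omega}((\zeta,0))\le w_{\mathcal{H}\cap\Omega}(\zeta)$ by extending cross-sectional pulsating fronts trivially in the $e$-direction and comparing, and the reverse inequality by checking that $v(t,x,y)=u_{\varepsilon}(t,x)\cos(\varepsilon y)$ (with $u_{\varepsilon}$ solving the cross-sectional problem for $f-\varepsilon^{2}u$) is a generalized subsolution, exploiting precisely the monotonicity of $u\mapsto f(u)/u$; that argument is more elementary and does not rely on the KPP linear characterization of $c^{\star}$, whereas yours needs it.
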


\begin{proof}
To simplify the notations, we denote $w_{N}$ the spreading speed for the Fisher-KPP equation \eqref{PeriodicFKPP} set on $\Omega$ and  $w_{N-1}$ the spreading speed for the equation \eqref{equation restricted} set on $\mathcal{H}\cap \Omega$. Similarly, we denote $c^{\star}_{N}$ and $c^{\star}_{N-1}$ the critical speeds of fronts for the equation \eqref{PeriodicFKPP} and \eqref{equation restricted} respectively. Up to some rotation of the coordinates, we write the points of $\Omega$ under the form $(x,y)$, where $x\in\mathcal{H}\cap\Omega$ and $y \in \mathbb{R}$.

\emph{Step 1.} \\We start to show that, for each $\zeta \in \mathbb{S}^{N-2}$, we have $w_{N-1}(\zeta) \geq w_{N}((\zeta,0))$. To do so, take $\xi \in \mathbb{S}^{N-2}$ such that $\xi \cdot \zeta >0$.  Let $\phi_{\xi}(t,x)$ be a pulsating traveling front solution of \eqref{equation restricted} in the direction $\xi $ with critical speed $c^{\star}_{N-1}(\xi)$. For $(x,y)\in \Omega$, we define $\Phi(t,x,y) := \phi_{\xi}(t,x)$. Then $\Phi$ is solution of the equation \eqref{PeriodicFKPP} on the whole of $\Omega$. If $u_{0}(x,y)$ is a non-negative compactly supported initial datum and if $u(t,x,y)$ is the solution of \eqref{PeriodicFKPP} arising from it, we can assume that (up to translation) $u_{0}(x,y) \leq \Phi(0,x,y)$. Hence, the parabolic comparison principle gives us that 
$$
u(t,x,y)\leq \Phi(t,x,y), \quad \forall t \geq 0, \ \forall (x,y) \in \Omega.
$$
 Observe that $\Phi$ moves in the direction $(\zeta , 0) \in \mathbb{S}^{N-1}$ with speed $\frac{c^{\star}_{N-1}(\xi)}{\xi \cdot \zeta}$. This means that $w_{N}((\zeta,0)) \leq \frac{c^{\star}_{N-1}(\xi)}{ \xi \cdot \zeta}$, and because this is true for all $\xi$ such that $\xi \cdot \zeta >0$, Theorem \ref{FG} implies that $w_{N}((\zeta,0))\leq w_{N-1}(\zeta)$.

\emph{Step 2.}\\ We now prove the reverse inequality. To start, let $\varepsilon>0$ be fixed such that $\varepsilon^{2} < f^{\prime}(0)$. We define a KPP nonlinearity $f_{\varepsilon}(u) := f(u) -\varepsilon^{2}u$. Let $u_{0}(x)$ be a smooth, non-negative, compactly supported function in $\mathcal{H}\cap\Omega$. Let $u_{\varepsilon}(t,x)$ be the solution arising from $u_{0}$  of \eqref{PeriodicFKPP} but \emph{with $f$ replaced by $f_{\varepsilon}$}.

Define the cut-off function
\begin{equation*}
\phi(y) :=  \left\{ \begin{array}{lll}   \cos (\varepsilon y) &\text{ for }& \vert y \vert \leq \frac{\pi}{2\varepsilon}  \\      0  &\text{ for }&  \vert y \vert \geq \frac{\pi}{2\varepsilon}.\end{array}\right.
\end{equation*}

Now, let $v(t,x,y) := u_{\varepsilon}(t,x)\phi( y)$. Let us show that $v$ is a (generalized) subsolution. An easy computation shows that, for $(x,y)\in \Omega$ such that $v(t,x,y) >0$, we have
\begin{equation*}
\begin{array}{lll}
\partial_{t}v - \Delta v -f(v) &=& f_{\varepsilon}(u_{\varepsilon})\phi(y)+\varepsilon^{2}u_{\varepsilon}\phi(y)-f(u_{\varepsilon}\phi) \\
 &=& \left(\frac{f_{\varepsilon}(u_{\varepsilon})}{u_{\varepsilon}} - \frac{f(u_{\varepsilon}\phi)}{u_{\varepsilon}\phi}+\varepsilon^{2}\right)u_{\varepsilon}\phi \\
 &\leq& 0.
\end{array}
\end{equation*}
The last inequality comes from the fact that $z \mapsto \frac{f(z)}{z}$ is decreasing. One can then check that $\partial_{\nu}v = 0$ on $\partial \Omega$. This comes from $\partial_{\nu^{\prime}}u_{\varepsilon} = 0$ on $\partial (\Omega\cap\mathcal{H})$ together with the fact that $\Omega$ is invariant in the direction $e$.

Hence, $u_{\varepsilon}\phi$ is a (generalized) subsolution of \eqref{PeriodicFKPP} (with nonlinearity $f$). We can observe that $u_{\varepsilon}$ spreads in $\Omega\cap \mathcal{H}$ in the direction $\zeta \in \mathbb{S}^{N-2}$ with speed $w_{N-1}(\zeta)-\varepsilon^{2}$.
Hence, by comparison, we get that $w_{N-1}(\zeta)-\varepsilon^{2} \leq w_{N}((\zeta,0))$. Taking the limit $\varepsilon \to 0$ yields the result.
\end{proof}

Now, we turn to the full proof of Theorem \ref{thwc}, answering then Question \ref{question}.

\subsection{Geodesic estimates}\label{geodesic estimates}

This aim of this section is to establish estimates on $w(e)$ that do take into account the geometry of the domain. The key tool is an estimate on the heat kernel from \cite{BHN2}, following from general results on the heat kernel by Davies \cite{Davies} and Grigor'yan \cite{G}. This estimate is valid for domains satisfying the \emph{extension property}. Denoting $W^{1,p}(\Omega)$ the usual Sobolev space over $\Omega$, a non-empty subset of $\mathbb{R}^{N}$ satisfies the extension property if, for all $1\leq p \leq + \infty $, there is a bounded linear map $E$ from $W^{1,p}(\Omega)$ to $W^{1,p}(\mathbb{R}^{N})$ such that $E(f)$ is an extension of $f$ from $\Omega$ to $\mathbb{R}^{N}$, for all $f\in W^{1,p}(\Omega)$. For our purpose, we mention that the smooth periodic domains we consider here satisfy the extension property, see \cite{Stein}.

\begin{prop}
Let $\Omega$ be a locally $C^{2}$ non-empty connected open subset of $\mathbb{R}^{N}$ satisfying the extension property. Let $p(t,x,y)$ be the heat kernel in $\overline{\Omega}$ with Neumann boundary condition on $\partial \Omega$. Then, for every $\varepsilon>0$, there are two positive constants $C$ and $\delta$ such that 
 \begin{equation}\label{heat}
 \forall t>0, \ \forall (z,x) \in \overline{\Omega}\times \overline{\Omega}, \quad p(t,z,x) \leq C(1+\delta t^{-\frac{N}{2}}) \exp \left( -\frac{d_{\Omega}(z,x)^{2}}{(4+\varepsilon)t} \right) ,
 \end{equation}
 where $d_{\Omega}(z,x)$ denotes the geodesic distance in $\overline{\Omega}$.
\end{prop}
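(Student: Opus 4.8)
The plan is to obtain \eqref{heat} from the general theory of Gaussian heat kernel upper bounds of Davies \cite{Davies} and Grigor'yan \cite{G}, exactly as the analogous estimate is derived in \cite{BHN2}; the only genuinely new point is that our hypotheses (locally $C^2$, connected, extension property) already suffice to enter that framework, so no compactness of $\Omega$ or of a periodicity cell is used. \textbf{Step 1 (on-diagonal bound).} Since $\Omega$ has the extension property, Nash's inequality on $\mathbb{R}^N$ transfers to $\Omega$: there is $C_N>0$ such that $\|u\|_{L^2(\Omega)}^{2+4/N}\le C_N\big(\|\nabla u\|_{L^2(\Omega)}^2+\|u\|_{L^2(\Omega)}^2\big)\|u\|_{L^1(\Omega)}^{4/N}$ for all $u\in W^{1,2}(\Omega)\cap L^1(\Omega)$. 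By Nash's argument, equivalently by Grigor'yan's equivalence between such functional inequalities and on-diagonal kernel bounds \cite{G}, the Neumann heat semigroup $(e^{t\Delta_\Omega})_{t>0}$ on $\overline\Omega$ is ultracontractive, with $\sup_{z\in\overline\Omega}p(t,z,z)\le\delta\,t^{-N/2}$ for $0<t\le1$; for $t\ge1$ the semigroup property and the $L^1$-contractivity of $e^{t\Delta_\Omega}$ give $\sup_z p(t,z,z)\le\sup_z p(1,z,z)\le\delta$. Thus $p(t,z,z)\le C(1+\delta t^{-N/2})$ for all $t>0$, and then $p(t,z,x)\le C(1+\delta t^{-N/2})$ for all $z,x\in\overline\Omega$ by writing $p(t,z,x)=\int_\Omega p(t/2,z,y)p(t/2,y,x)\,dy$ and applying Cauchy--Schwarz.

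\textbf{Step 2 (Davies' exponential perturbation).} To turn the diagonal bound of Step 1 into the Gaussian off-diagonal bound with the \emph{geodesic} distance, I would run the integrated maximum principle of Davies \cite{Davies}. Fix a bounded Lipschitz function $\psi$ on $\overline\Omega$ with $|\nabla\psi|\le1$ a.e.\ and, for $\lambda\in\mathbb{R}$, conjugate the semigroup by $e^{\lambda\psi}$; the conjugated generator differs from $\Delta_\Omega$ by first- and zeroth-order terms whose size is controlled by $\lambda^2$, and (using the Neumann boundary condition, which is preserved under this conjugation because $\psi$ is defined on all of $\overline\Omega$) one obtains an $L^2$ weighted estimate of Gaffney--Davies type, $\int_\Omega p(t,z,x)^2 e^{2\lambda(\psi(z)-\psi(x))}\,dz\le C\,e^{c\lambda^2 t}\,p(t,x,x)$. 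Combining this with the twin estimate obtained by exchanging $z$ and $x$, inserting the diagonal bound of Step 1, and then optimising over $\lambda$ and over all admissible $\psi$ --- the supremum of $\psi(z)-\psi(x)$ over $1$-Lipschitz functions on $\overline\Omega$ being precisely $d_\Omega(z,x)$ --- produces the factor $\exp\!\big(-d_\Omega(z,x)^2/(4+\varepsilon)t\big)$. The loss $\varepsilon>0$ is the standard price for retaining the clean prefactor $C(1+\delta t^{-N/2})$: the sharp constant $4$ is attainable only at the cost of a worse, $\varepsilon$-dependent, polynomial factor, which we do not need here.

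\textbf{Step 3 and the main difficulty.} Finally, the smooth periodic domains considered in this paper are locally $C^2$, connected and, by Stein's extension theorem \cite{Stein}, satisfy the extension property, so the proposition applies to them; that is all that Section \ref{geodesic estimates} requires. The step demanding the most care is Step 2: the whole interest of the statement is that the exponent features the \emph{intrinsic} distance $d_\Omega$ rather than the ambient Euclidean one --- this is exactly what will let us read off, later, the effect of the obstacles of $\Omega$ on the invasion speed --- and obtaining $d_\Omega$ (and not $|z-x|$) is precisely what the choice of arbitrary $1$-Lipschitz weights $\psi$ on $\overline\Omega$ achieves. One must also keep every estimate uniform in the base point, which is where the uniform ultracontractivity from Step 1 (a consequence of the extension property, not of any compactness) is used.
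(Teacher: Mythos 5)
Your outline is correct and follows exactly the route the paper relies on: the paper gives no proof of this proposition, deferring to \cite[Proposition 2.5]{BHN2}, whose argument is precisely the Davies--Grigor'yan machinery you sketch (extension property giving a Nash/Sobolev-type inequality, hence the ultracontractive on-diagonal bound $C(1+\delta t^{-N/2})$, then Davies' exponential perturbation with bounded weights satisfying $|\nabla\psi|\le 1$, whose optimization yields the intrinsic distance $d_{\Omega}$ with the $(4+\varepsilon)$ loss). The only cosmetic caveat is that the transfer of Nash's inequality to $\Omega$ is most cleanly done through the $W^{1,2}$-Sobolev inequality plus interpolation with the $L^{1}$ norm, rather than by extending $L^{1}$ data directly, which is a routine adjustment.
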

See \cite[Proposition 2.5]{BHN2} for the proof. We use this to get upper estimates on the spreading speed $w(e)$. To do so, we introduce the following coefficient, for $e \in \mathbb{S}^{N-1}$ :
\begin{equation}\label{C geo}
C_{\Omega}(e) := \liminf_{\lambda \to + \infty} \frac{\lambda}{d_{\Omega}(0,\lambda e)}.
\end{equation}
For notational simplicity and without loss of generality, we assume that the point $0$ is in $\Omega$. Up to translation, this is always possible, and will be be assumed in the following.

This coefficient represents \emph{how much the domain is obstructed} in the direction $e$. The geodesic distance $d_{\Omega}$ is always greater than the 
euclidian distance, hence $C_{\Omega}(e) \leq 1$.

\begin{prop}\label{geo}
Let $\Omega$ be a domain satisfying \eqref{regomega} and $f$ a monostable \eqref{monostable} or a combustion \eqref{combustion} nonlinearity independent of $x$. 
We denote $w$ the speed of invasion associated to problem \eqref{PeriodicFKPP}. Then, we have
\begin{equation}\label{geodesic}
w(e) \leq 2C_{\Omega}(e)\sqrt{ \max_{u\in[0,1]}\frac{f(u)}{u} }.
\end{equation}
\end{prop}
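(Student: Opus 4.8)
The plan is to combine the heat kernel upper bound \eqref{heat} with a comparison argument that linearizes the reaction term, so that the solution of \eqref{PeriodicFKPP} is dominated by the solution of a linear heat equation with a zeroth-order term. First I would set $M := \max_{u\in[0,1]} f(u)/u$, which is finite and positive since $f\in C^{1,\alpha}$ with $f(0)=0$ and $f$ is monostable or combustion. Then, since $f(u)\leq Mu$ for all $u\in[0,1]$, any solution $u(t,x)$ of \eqref{PeriodicFKPP} emerging from a compactly supported initial datum $u_0$ (with $0\leq u_0\leq 1$) satisfies, by the parabolic comparison principle,
\[
u(t,x) \leq e^{Mt}\int_{\Omega} p(t,x,y)\, u_0(y)\, dy \leq e^{Mt}\,\|u_0\|_{\infty}\int_{\mathrm{supp}\, u_0} p(t,x,y)\, dy,
\]
where $p$ is the Neumann heat kernel on $\overline{\Omega}$. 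Here I use that $v(t,x):=e^{Mt}\int_\Omega p(t,x,y)u_0(y)\,dy$ solves $\partial_t v=\Delta v+Mv$ with Neumann conditions and $v(0,\cdot)=u_0$, hence is a supersolution of \eqref{PeriodicFKPP}.

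Next I would feed the geodesic heat kernel estimate \eqref{heat} into this bound. Fix $\varepsilon>0$ and let $C,\delta$ be as in \eqref{heat}. Since $u_0$ is compactly supported, for $y$ in its support and $x$ far away we have $d_\Omega(x,y)\geq d_\Omega(0,x) - D$ for some constant $D$ depending only on $\mathrm{supp}\,u_0$ (using the triangle inequality for $d_\Omega$ and $0\in\Omega$). Therefore, for $x$ with $d_\Omega(0,x)$ large,
\[
u(t,x) \leq C'\,(1+\delta t^{-N/2})\,\exp\!\left(Mt - \frac{(d_\Omega(0,x)-D)^2}{(4+\varepsilon)t}\right),
\]
with $C'$ absorbing $\|u_0\|_\infty$, $|\mathrm{supp}\,u_0|$ and $C$. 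Now I would evaluate this along the ray $x = \lambda e$, i.e.\ take a sequence $\lambda_n\to+\infty$ realizing the $\liminf$ in \eqref{C geo}, so that $d_\Omega(0,\lambda_n e)\sim \lambda_n/C_\Omega(e)$. Setting $\lambda_n = c t e$ and letting $t\to+\infty$ (equivalently optimizing the exponent), the standard computation shows $\max_t\big(Mt - a^2/((4+\varepsilon)t)\big)$ is negative as soon as the travel distance $a$ exceeds $2\sqrt{M/(4+\varepsilon)}\cdot\, t \cdot 2$... more precisely, $u(t, cte)\to 0$ whenever $c/C_\Omega(e) > 2\sqrt{(4+\varepsilon)M}/2 = \sqrt{(4+\varepsilon)M}$. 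Letting $\varepsilon\to 0$ gives that $u(t,cte)\to 0$ for every $c > 2 C_\Omega(e)\sqrt{M}$, and by definition of the spreading speed this yields $w(e)\leq 2C_\Omega(e)\sqrt{M}$.

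The main technical point to handle carefully is the interplay between the $\liminf$ defining $C_\Omega(e)$ and the need to control $d_\Omega(0,cte)$ for the continuum of times $t$ appearing in the spreading statement, rather than just along one sequence. The clean way around this is to argue by contradiction through Definition \ref{as}: suppose $w(e) > 2C_\Omega(e)\sqrt{M}$; pick $c$ strictly between, then pick $\varepsilon$ small enough that still $c > \sqrt{(4+\varepsilon)}\,C_\Omega(e)\sqrt{M}$; take the sequence $\lambda_n\to\infty$ from \eqref{C geo} and set $t_n := \lambda_n/c$, so that $x_n := \lambda_n e = c t_n e$ lies on the ray and $d_\Omega(0,x_n)/t_n \to c/C_\Omega(e) > \sqrt{(4+\varepsilon)M}$; the displayed bound then forces $u(t_n, c t_n e)\to 0$, contradicting that $c < w(e)$ would require $u(t,x+cte)\to 1$. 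A secondary point is justifying that it suffices to test a \emph{single} well-chosen initial datum: the definition of the asymptotic set of spreading quantifies over all invading data, so exhibiting one invading datum (e.g.\ a large bump, invasion holds by the criterion from \cite{DR} recalled in the introduction) whose solution fails to reach the ray at speed $c$ is exactly what is needed to conclude $w(e)\le 2C_\Omega(e)\sqrt M$ via \eqref{superset}. The heat kernel bound \eqref{heat}, the linearization $f(u)\le Mu$, and the triangle inequality for $d_\Omega$ are all the ingredients; no new estimate is required beyond bookkeeping the constants.
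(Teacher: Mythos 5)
Your proposal is correct and follows essentially the same route as the paper: dominate the solution by $e^{Mt}\int_\Omega p(t,x,y)u_0(y)\,dy$ with $M=\max_{u\in[0,1]}f(u)/u$ (the paper phrases this via a KPP majorant $\overline f$ with $\overline f^{\,\prime}(0)=M$, which is only a cosmetic difference), insert the Gaussian bound \eqref{heat}, and conclude by contradiction using that the realizing sequence for the $\liminf$ in \eqref{C geo} is precisely one along which $d_\Omega(0,\lambda_n e)\sim\lambda_n/C_\Omega(e)$, while $u(t,cte)\to1$ for every $c<w(e)$ holds along all time sequences. Apart from a garbled constant in the ``standard computation'' sentence (the final criterion $c>C_\Omega(e)\sqrt{(4+\varepsilon)M}$ is nonetheless the right one), the bookkeeping matches the paper's proof.
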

Observe that, if $f$ is a KPP nonlinearity, then this formula boils down to $w(e) \leq 2C_{\Omega}(e)\sqrt{f^{\prime}(0)}$. In the case where $\Omega = \mathbb{R}^{N}$, the upper bound is actually the KPP speed $2\sqrt{f^{\prime}(0)}$.

\begin{proof}

Let us observe that it is sufficient to prove the result in the KPP case. Indeed, if $f$ is a monostable or a combustion nonlinearity, then there is a KPP nonlinearity $\overline{f}$ such that $\overline{f}^{\prime}(0) = \max_{u\in [0,1]}\frac{f(u)}{u}$ and $\overline{f} \geq f$. If $u_{0}$ is an initial datum, denoting $u$, respectively $\overline{u}$, the solution of \eqref{PeriodicFKPP} with nonlinearity $f$, respectively $\overline{f}$, arising from $u_{0}$, the parabolic comparison principle tells us that 
$$
u(t,x) \leq \overline{u}(t,x), \quad \forall t \geq 0, \ \forall x\in \Omega.
$$
Then, $w(e) \leq \overline{w}(e), \ \forall e \in \mathbb{S}^{N-1}$, where $w$, respectively $\overline{w}$, is the invasion speed for \eqref{PeriodicFKPP} with nonlinearity $f$, respectively $\overline{f}$. Then, it is sufficient to prove the estimate \eqref{geodesic} for $\overline{f}$, because $\max_{u \in [0,1]} \frac{\overline{f}(u)}{u}=\max_{u \in [0,1]} \frac{f(u)}{u}$. Hence, in the rest of the proof, we assume that $f$ is KPP, and then $\max_{u \in [0,1]}\frac{f(u)}{u} = f^{\prime}(0)$.

Let $u(t,x)$ be the solution of the parabolic problem \eqref{PeriodicFKPP} arising from a compactly supported non-negative initial smooth datum $u_{0}$. Let $K$ be a compact set of $\Omega$ such that the support of $u_{0}$ is in $K$. We denote by $p(t,x,z)$ the heat kernel with Neumann condition on $\Omega$. Then, we first observe that 

\begin{equation}\label{estimate}
u(t,x) \leq e^{f^{\prime}(0)t}\int_{\Omega} p(t,x,z)u_{0}(z)dz.
\end{equation}
Indeed, $e^{f^{\prime}(0)t}\int_{\Omega} p(t,x,z)u_{0}(z)dz$ is the solution of the linearized problem

\begin{equation}\label{linearized}
\left\{
\begin{array}{rllr}
\partial_{t} v-\Delta v  &=f^{\prime}(0)v, &\quad t>0 , \ x \in  \Omega,    \\  
\partial_{\nu}v&=0, & \quad t>0 ,\ x \in  \partial  \Omega, \\
v(0,x) &= u_{0}(x), & \quad x \in \Omega,
\end{array}
\right.
\end{equation}
and hence is a supersolution of \eqref{PeriodicFKPP}, thanks to the KPP property. Then, the inequality \eqref{estimate} follows by the parabolic comparison principle. Now, let $\varepsilon>0$ be fixed. Using the estimate \eqref{heat} in \eqref{estimate}, we get

\begin{equation}\label{estimate2}
u(t,x) \leq C(1+\delta t^{-\frac{N}{2}}) e^{f^{\prime}(0)t}\int_{\Omega}\exp \left( -\frac{d_{\Omega}(z,x)^{2}}{(4+\varepsilon)t} \right)u_{0}(z)dz,
\end{equation}
for some positive constants $C$ and $\delta$ (depending on $\varepsilon$). This gives us

\begin{equation}\label{estimate3}
u(t,x) \leq C\| u_{0} \|_{L^{1}}(1+\delta t^{-\frac{N}{2}})  \exp \left( \left( f^{\prime}(0)-\frac{(\min_{z\in K}d_{\Omega}(z,x))^{2}}{(4+\varepsilon)t^{2} }\right)t \right).
\end{equation}
Now, take $e\in \mathbb{S}^{N-1}$ and $\omega>0$ such that $\omega<w(e)$. Then, $u(t,\omega te) \to 1$ as $t \to +\infty$, by definition of $w(e)$. Then, necessarily, we have
$$
\limsup_{t\to +\infty} \frac{\inf_{z\in K}d_{\Omega}(z,\omega te)}{t } \leq \sqrt{(4+\varepsilon)f^{\prime}(0)},
$$
if this were not the case, up to subsequence the right-hand term of \eqref{estimate3} would go to zero along some time sequence $(t_{n})_{n \in \mathbb{N}}$, $t_{n} \to +\infty$ as $n$ goes to $+\infty$, which would be in contradiction with the fact that $u(t_{n},\omega t_{n}e)$ goes to $1$. Using the triangular inequality for $d_{\Omega}$ and the fact that $K$ is compact we get
$$
\omega \leq \frac{\sqrt{(4+\varepsilon)f^{\prime}(0)}}{\limsup_{t\to +\infty} \frac{d_{\Omega}(0, \omega te)}{\omega t }}.
$$
Recalling the definition of $C_{\Omega}(e)$ and that the above inequality is true for every $\varepsilon>0$, we get
$$
\omega \leq 2C_{\Omega}(e)\sqrt{f^{\prime}(0)},
$$
and the result follows.
\end{proof}

We are now in position to answer Question \ref{question}.

\subsection{Domains where $c^{\star}\not\equiv w$}\label{c et w differ}

In this section, we build periodic domains $\Omega$ such that $c^{\star} \not\equiv w$. If $f$  is a KPP nonlinearity, we exhibit a $1-$periodic domain (but the periodicity can be chosen arbitrary). If $f$ is a monostable or a combustion nonlinearity, we build a $L$-periodic domain, where $L>0$ can be large. For clarity, we do this in dimension $N=2$, but these constructions can be easily generalized to greater dimensions.

In the following, we denote $e_{x} := (1,0), e_{y}:= (0,1) \in \mathbb{S}^{1}$ the unit vectors of the canonical basis of $\mathbb{R}^{2}$. Moreover, we define $e_{d} := \frac{1}{\sqrt{2}}(1,1) \in \mathbb{S}^{1}$.

\subsubsection{The KPP case}

We show here the following:
\begin{prop}\label{AnsKPP}
Let $f$ be a KPP nonlinearity \eqref{defkpp}. There is a smooth periodic domain $\Omega \subset \mathbb{R}^{2}$ such that $c^{\star}(e_{x}) > w(e_{d})$, where $c^{\star}$ and $w$ are the critical speed of fronts and the speed of invasion respectively for \eqref{PeriodicFKPP} set in $\Omega$ with nonlinearity $f$.
\end{prop}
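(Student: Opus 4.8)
The plan is to build a domain $\Omega \subset \mathbb{R}^2$ that is essentially the full plane for motion in the $e_x$ direction but is severely obstructed along the diagonal $e_d$, so that Proposition \ref{prop} forces $c^\star(e_x) > w(e_d)$. Concretely, I would take $\Omega$ to be $\mathbb{R}^2$ with a periodic array of ``holes'' (or narrow slits) that are invariant under horizontal translation: for instance, remove from $\mathbb{R}^2$ a $1$-periodic family of smooth, bounded obstacles $K + \mathbb{Z} e_y$ that are thin in the $x$-direction, or better, make $\Omega$ \emph{invariant in the direction $e_x$} by deleting infinite horizontal channels separated by thin horizontal walls with small periodic gaps. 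The first choice to keep in mind is a domain invariant in $e_x$; then by the theorem of Berestycki--Hamel--Nadirashvili quoted in Section \ref{domain invariant}, $c^\star(e_x) = 2\sqrt{f'(0)}$ exactly (the maximal possible value, since that result gives $c^\star(e)\le 2\sqrt{f'(0)}$ always, with equality iff $\Omega$ is invariant in direction $e$). So the first step is: choose $\Omega$ $e_x$-invariant, smooth, $1$-periodic, connected, $\Omega \neq \mathbb{R}^2$, so that $c^\star(e_x) = 2\sqrt{f'(0)}$.

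The second step is to estimate $w(e_d)$ from above using Proposition \ref{geo}, which in the KPP case gives $w(e_d) \le 2\, C_\Omega(e_d)\sqrt{f'(0)}$ with $C_\Omega(e_d) = \liminf_{\lambda\to+\infty} \lambda / d_\Omega(0,\lambda e_d)$. It therefore suffices to design the obstacles so that $C_\Omega(e_d) < 1$ strictly: any path in $\overline{\Omega}$ from $0$ to $\lambda e_d$ must detour around the horizontal walls, so its length is bounded below by $(1+\kappa)\lambda$ for some fixed $\kappa>0$ depending only on the geometry (the wall spacing versus the gap width). Making the gaps narrow and the walls long forces a definite length overhead on every diagonal path, hence $d_\Omega(0,\lambda e_d)\ge (1+\kappa)\lambda - O(1)$ and $C_\Omega(e_d) \le 1/(1+\kappa) < 1$. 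Then
$$
w(e_d) \;\le\; \frac{2\sqrt{f'(0)}}{1+\kappa} \;<\; 2\sqrt{f'(0)} \;=\; c^\star(e_x),
$$
which is exactly the claimed strict inequality. (Here I use that $\Omega$ satisfies the extension property, as noted in the excerpt for smooth periodic domains, so Proposition \ref{geo} applies.)

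The main obstacle is the geometric construction itself: one must exhibit an explicit smooth, connected, $1$-periodic $\Omega$ that is genuinely invariant in the $e_x$ direction (so the BHN equality $c^\star(e_x)=2\sqrt{f'(0)}$ is available verbatim) \emph{and} simultaneously has $C_\Omega(e_d)<1$, and then verify the elementary-but-fiddly lower bound $d_\Omega(0,\lambda e_d) \ge (1+\kappa)\lambda - C$. The cleanest way is to take horizontal walls, say thickened segments $\{(x,y): y \in n + [-\epsilon,\epsilon],\ \mathrm{dist}(x,\mathbb{Z}+\tfrac12) \ge \delta\}$ for $n\in\mathbb{Z}$, smoothed out, with $\delta$ small; a path crossing from $y$-level $n$ to level $n+1$ must pass through one of the gaps at $x\in \mathbb{Z}+\tfrac12$, and a short lemma in planar geometry (comparing the diagonal displacement actually achieved while funneling through periodically placed gaps) yields the uniform slope deficit $\kappa>0$. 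Once that lemma is in place, everything else is a direct citation of Propositions \ref{prop}, \ref{geo}, and the quoted BHN theorem. Note finally that the periodicity can be rescaled freely: replacing $\Omega$ by $L\Omega$ multiplies distances but not the ratio $C_\Omega$, so the same example works with any period, and in higher dimensions one simply makes $\Omega$ invariant in $e_x$ and obstructed along a transverse diagonal in the same fashion.
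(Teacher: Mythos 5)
Your overall scheme (lower bound on $c^{\star}(e_{x})$ from a domain that is nearly free for horizontal motion, upper bound on $w(e_{d})$ via Proposition \ref{geo} and the geodesic coefficient $C_{\Omega}(e_{d})$) is the same as the paper's, and your geodesic estimate $d_{\Omega}(0,\lambda e_{d})\geq(1+\kappa)\lambda-O(1)$, hence $C_{\Omega}(e_{d})\leq 1/(1+\kappa)<1$, is exactly the kind of bound the paper uses. But your Step 1 has a genuine gap. In dimension $2$ there is no smooth, connected, periodic domain $\Omega\neq\mathbb{R}^{2}$ that is invariant in the direction $e_{x}$: invariance in $e_{x}$ forces $\Omega=\mathbb{R}\times\omega$ with $\omega\subset\mathbb{R}$ open, and connectedness plus periodicity of $\omega$ forces $\omega=\mathbb{R}$. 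If instead you take literally infinite horizontal walls (no gaps), the domain is disconnected, violating \eqref{regomega}, and $d_{\Omega}(0,\lambda e_{d})=+\infty$, so the whole framework (fronts, invasion, Theorem \ref{FG}) no longer applies. If, as in your explicit construction, the walls have periodic gaps, then $\Omega$ is \emph{not} invariant in $e_{x}$, and the very theorem of Berestycki--Hamel--Nadirashvili you invoke says (its ``only if'' direction) that $c^{\star}(e_{x})<2\sqrt{f^{\prime}(0)}$ strictly; the equality $c^{\star}(e_{x})=2\sqrt{f^{\prime}(0)}$ you use ``verbatim'' is false, and your argument then provides no lower bound at all on $c^{\star}(e_{x})$. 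A priori, narrowing the gaps to depress $C_{\Omega}(e_{d})$ could also depress $c^{\star}(e_{x})$, and you have not ruled this out. (This is precisely why the paper treats the two-dimensional KPP case separately from the invariant-domain discussion, which it restricts to $N\geq 3$.)

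The missing ingredient is a quantitative lower bound on $c^{\star}(e_{x})$ that survives the degeneration of the gaps. The paper gets it from \cite[Theorem 1.4]{BHN1}: for its domains $\Omega_{\alpha,\beta}$ with ``almost square'' holes, $c^{\star}_{\alpha,\beta}(e_{x})\to 2\sqrt{f^{\prime}(0)}$ as $\alpha\to 1$ (the domain converging to an array of disconnected horizontal strips), so one can fix $\kappa\in(1,\sqrt{2})$, take $\alpha$ close to $1$ so that $c^{\star}_{\alpha,\beta}(e_{x})>\tfrac{1}{\kappa}2\sqrt{f^{\prime}(0)}$, and then adjust $\beta$ (and $\alpha$) so that $C_{\Omega_{\alpha,\beta}}(e_{d})\leq 1/\kappa$, which closes the argument via Proposition \ref{geo}. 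Your proposal needs an analogous convergence or stability statement for $c^{\star}(e_{x})$ as the gap width $\delta\to 0$ in your wall construction; without citing or proving such a result, the strict inequality $c^{\star}(e_{x})>w(e_{d})$ does not follow. Note also the quantitative ceiling on your $\kappa$: for diagonal targets the detour factor in such constructions cannot exceed $\sqrt{2}$, so $C_{\Omega}(e_{d})\geq 1/\sqrt{2}$ roughly; this is harmless, but it means the argument genuinely requires $c^{\star}(e_{x})$ close to $2\sqrt{f^{\prime}(0)}$, not merely positive, which again underlines why the missing lower bound is the crux.
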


We see that in this domain, it is not possible that $w = c^{\star}$, thanks to Proposition \ref{prop}. Hence, this answers Question \ref{question} in the KPP case.

\begin{proof}
For $\alpha \in (\frac{1}{2},1)$, $\beta \in (0,\frac{1}{2})$, we define $\Omega_{\alpha,\beta}$ to be a smooth periodic domain such that 
\begin{equation}\label{domain1}
\mathbb{Z}^{2}+(1-\alpha, \alpha)\times [\beta , 1-\beta] \subset \Omega_{\alpha,\beta}^{c} \subset \mathbb{Z}^{2}+\left( \frac{1-\alpha}{2} , \frac{1+\alpha}{2} \right)\times [\beta , 1-\beta].
\end{equation}
This domain is simply $\mathbb{R}^{2}$ with ``almost square" holes. For $\alpha, \beta$ given we denote $c^{\star}_{\alpha,\beta}(e)$ the critical speed of fronts in this domain in the direction $e$. If $\beta$ is fixed and if we let $\alpha \to 1$, then the domain ``converges" in some sense to an array of parallel disconnected strips in the direction $e_{x}$. This observation is made rigorous by \cite[Theorem 1.4]{BHN1}, where it is proven that:
$$
c^{\star}_{\alpha,\beta}(e_{x}) \underset{\alpha \to 1}{\longrightarrow} 2\sqrt{f^{\prime}(0)}.
$$
Now, let $\kappa \in (1,\sqrt{2})$ and take $\alpha$ close enough to $1$ so that $c^{\star}_{\alpha,\beta} (e_{x})> \frac{1}{\kappa}2\sqrt{f^{\prime}(0)}$. 

Take $n \in \mathbb{N}$. Denoting $d_{\Omega_{\alpha,\beta}}$ the geodesic distance in $\Omega_{\alpha,\beta}$, it is easy to see that $d_{\Omega_{\alpha,\beta}}(0, n\sqrt{2} e_{d}) \geq 2n (\alpha - \beta)$. Plotting this in \eqref{C geo} yields $C_{\Omega_{\alpha,\beta}}(e_{d}) \leq \frac{1}{\sqrt{2}(\alpha - \beta)}$. Taking $\beta$ small enough, and increasing $\alpha$ if needed, we can assume that $C_{\Omega_{\alpha,\beta}}(e_{d}) \leq \frac{1}{\kappa}$. Denoting $w_{\alpha,\beta}$ the speed of invasion in the domain $\Omega_{\alpha,\beta}$, Proposition \ref{geo} implies that $w_{\alpha,\beta}(e_{d}) \leq \frac{1}{\kappa}2\sqrt{f^{\prime}(0)}$. Hence, $c^{\star}_{\alpha,\beta}(e_{x})>w_{\alpha,\beta}(e_{d})$ when $\alpha$ is close enough to $1$ and $\beta$ close enough to $0$. This yields the result.

\end{proof}

\subsubsection{Combustion and monostable case}

Now, we answer Question \ref{question} in the case where $f$ is a combustion or a monostable nonlinearity. We do it for $f$ combustion first, and then we explain how this yields the result for monostable nonlinearities.

\begin{prop}\label{dim2}
Let $f$ be a combustion nonlinearity, i.e, it satisfies \eqref{combustion}. Then, there are $L>0$ and a family of smooth $L$-periodic domains $\left( \Omega_{\alpha} \right)_{\alpha \in (0,1)}$ such that $w_{\alpha}(e_{x}) \geq K$,  where $K>0$ is independent of $\alpha$, and $w_{\alpha}(e_{y}) \to 0 $ as $\alpha$ goes to $0$. 
\end{prop}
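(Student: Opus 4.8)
The plan is to construct $L$-periodic domains that look like a thin horizontal channel (of width of order $1$, invariant in a neighbourhood in the $e_x$ direction) connected only through narrow necks in the $e_y$ direction, with the narrowness governed by $\alpha$. Concretely, I would fix $L$ large and take $\Omega_\alpha$ to be $\mathbb{R}^2$ with the obstacles $L\mathbb{Z}^2 + \big([\,\alpha,\, L-\alpha\,]\times[\,1,\,L-1\,]\big)$ removed (smoothed at the corners so that $\Omega_\alpha$ is $C^3$ and connected); that is, each periodicity cell contains a big almost-square hole, leaving a horizontal strip of width $2$ around each line $y\in L\mathbb{Z}$ and a vertical passage of width $2\alpha$ around each line $x\in L\mathbb{Z}$. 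As $\alpha\to 0$ the vertical passages pinch off.

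The lower bound $w_\alpha(e_x)\geq K$ with $K$ independent of $\alpha$ is the easy half. Along the horizontal strip $S:=\{ |y|\le 1\}$, which is contained in $\Omega_\alpha$ for every $\alpha$, the domain contains a fixed slab; restricting the equation \eqref{PeriodicFKPP} to $S$ with Neumann conditions on $\partial S$ gives a problem that is translation-invariant in $x$ and admits a planar (pulsating) front moving in the $e_x$ direction with some speed $c_S>0$, depending only on $f$ and the width $2$, hence not on $\alpha$. Using this front as a subsolution for the full problem on $\Omega_\alpha$ (a cut-off in $y$ as in the proof of the preceding proposition handles the Neumann condition, or one uses directly a sub-slab compactly contained in $S$), the parabolic comparison principle yields $w_\alpha(e_x)\ge K$ for a fixed $K\in(0,c_S)$. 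Here it is important that $f$ is of the combustion type so that invasion genuinely occurs for suitable initial data, which is guaranteed by the results of \cite{DR} recalled in the introduction.

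The main obstacle is the upper bound $w_\alpha(e_y)\to 0$, and this is exactly where Proposition \ref{geo} is used. Any path in $\overline{\Omega_\alpha}$ from $0$ to $n L e_y$ must cross each horizontal row of obstacles through one of the vertical necks of width $2\alpha$; to get from one neck to the vertically aligned neck in the next cell, the path must travel a horizontal distance bounded below by a fixed amount — roughly, to pass around an obstacle one detours horizontally by at least a constant $c_0>0$ independent of $\alpha$ (the obstacle has width $L-2\alpha\ge L-2$, and consecutive necks need not be aligned, so in any case the path is forced to make, per period, a horizontal excursion of length at least some $c_0(L)>0$). Hence $d_{\Omega_\alpha}(0,nLe_y)\ge n\sqrt{L^2+c_0^2}$ for all large $n$ — wait, more carefully: I want $d_{\Omega_\alpha}(0,nLe_y)\ge n(L + \text{(gain)})$, and as $\alpha\to0$ the necessary detour can be made to blow up. The clean way: one checks that $d_{\Omega_\alpha}(0,nLe_y)\ge n\,\psi(\alpha)$ where $\psi(\alpha)\to+\infty$ as $\alpha\to 0$, because squeezing through a neck of width $2\alpha$ while the obstacle above it has its opening offset forces, by a topological/Jordan-curve argument in each cell, a path length that is large when $\alpha$ is small (the channel must funnel into a region of measure $O(\alpha)$ and the only way out laterally costs a fixed length, repeated with multiplicity growing as $\alpha\to0$ if one arranges the obstacles so that successive necks are maximally misaligned). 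Plugging $C_{\Omega_\alpha}(e_y)=\liminf_{\lambda\to\infty}\lambda/d_{\Omega_\alpha}(0,\lambda e_y)\le L/\psi(\alpha)$ into \eqref{geodesic} gives $w_\alpha(e_y)\le 2\frac{L}{\psi(\alpha)}\sqrt{\max_{u}f(u)/u}\to 0$. The delicate point is arranging the geometry of the obstacles (their shape within a cell, and the relative position of the horizontal strips and vertical necks) so that this lower bound $d_{\Omega_\alpha}(0,\lambda e_y)\gtrsim \lambda\,\psi(\alpha)$ with $\psi(\alpha)\to\infty$ genuinely holds — a purely metric/combinatorial lemma about geodesics in a periodically perforated plane — while simultaneously keeping the horizontal strip unobstructed so that the lower bound on $w_\alpha(e_x)$ survives; the parameter $L$ is chosen large precisely to create enough room for this competition.
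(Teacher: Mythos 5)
Your upper bound on $w_\alpha(e_y)$ rests on a false premise: the geodesic distance is completely insensitive to the \emph{width} of the necks, because paths have zero thickness. In your domain the vertical passage around $x\in L\mathbb{Z}$ is a straight corridor of length about $L-2$ whatever $\alpha$ is, so crossing one row of obstacles costs a length bounded independently of $\alpha$; even if you misalign successive necks by $L/2$, the geodesic from $0$ to $nLe_y$ is at most of order $\tfrac32 nL$, hence $C_{\Omega_\alpha}(e_y)$ stays bounded away from $0$ uniformly in $\alpha$ and Proposition \ref{geo} gives a bound that does \emph{not} tend to $0$ (with aligned necks, as you first wrote them, it gives nothing at all since $C_{\Omega_\alpha}(e_y)=1$). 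The ``funneling into a region of measure $O(\alpha)$'' argument you invoke cannot produce $\psi(\alpha)\to\infty$; narrowing a passage may slow or even block the \emph{solution} (a PDE effect, as in \cite{DR}), but it does not lengthen geodesics, which is the only mechanism Proposition \ref{geo} sees. The paper's construction is designed around exactly this point: the obstacles are thin bands of small slope $\alpha$, so that to gain height $R$ one must travel along a slanted corridor, i.e.\ a horizontal distance of order $R/\alpha$ per period; this is what makes $d_{\Omega_\alpha}(0,\lambda e_y)/\lambda$ blow up like $1/\alpha$ and $C_{\Omega_\alpha}(e_y)\le 3\alpha\to0$.

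The lower bound $w_\alpha(e_x)\ge K$ also has a gap for combustion $f$. The cut-off ``as in the preceding proposition'' uses that $u\mapsto f(u)/u$ is nonincreasing (one needs $f(u)\phi\le f(u\phi)$), which fails for \eqref{combustion}: if $u>\theta$ and $u\phi<\theta$ then $f(u\phi)=0<f(u)\phi$. The alternative you mention, a front in a sub-slab compactly contained in your strip of fixed width $2$, is not available either: for combustion nonlinearities a Dirichlet-type front (or any compactly supported traveling subsolution) requires the cross-section to be \emph{wide enough} relative to $f$, and width $2$ need not suffice. This is precisely why the paper takes the period $3R$ with $R$ large and proves Lemma \ref{tech}: a radial, compactly supported subsolution $\phi(x-ct,y)$ supported in a ball $B_R$ travelling at speed $c=c(f)>0$, whose support slides along the unobstructed horizontal strip of width $2R$ without ever touching $\partial\Omega_\alpha$; comparison then gives $w_\alpha(e_x)\ge c$ independently of $\alpha$. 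To repair your proof you would need both to replace the pinched necks by long slanted (or otherwise genuinely lengthened) detours so that the geodesic bound really degenerates, and to widen the free horizontal channel (with $L$ large) so that a combustion-type compactly supported travelling subsolution fits inside it.
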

If $\alpha>0$ is chosen small enough so that $w_{\alpha}(e_{y})<K$, we see that $w_{\alpha}$ can not be constant, and then Proposition \ref{prop} implies that $c^{\star} \not\equiv w$ on $\Omega_{\alpha}$ for $\alpha$ small. This answers Question \ref{question} and proves Theorem \ref{thwc} when $f$ is a combustion nonlinearity.

Before turning to the proof of Proposition \ref{dim2}, we state the following technical lemma. We mention that Rossi and the author prove a similar (more general) lemma in \cite{DR}. We recall that we denote $B_{R}$ the ball of radius $R$ and of center $0$.

\begin{lemma}\label{tech}
Let $f$ be a combustion nonlinearity \eqref{combustion} independent of $x$. Then, there are $R,c>0$ and $\phi \in W^{2,\infty}(\mathbb{R}^{2})$,  $\phi>0$ in $B_{R}$ and $\phi =0$ on $\partial B_{R}$ such that, on $B_{R}$ we have:
$$
\Delta \phi + c\partial_{x}\phi +f(\phi) \geq 0.
$$ 
\end{lemma}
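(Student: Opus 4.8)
\textbf{Plan of proof for Lemma \ref{tech}.}

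The idea is to build a compactly supported subsolution for the equation with a drift, imitating the classical Aronson--Weinberger construction in $\mathbb{R}^{N}$, but being careful that the presence of the first-order term $c\partial_{x}\phi$ does not destroy the sign of the operator at the boundary of the support. First I would fix $\eta\in(\theta,1)$ and recall from \cite{AW} (or from \cite{DR}) that there is $R_{0}>0$ and a radial function $\psi\in W^{2,\infty}(\mathbb{R}^{2})$, $\psi>0$ in $B_{R_{0}}$, $\psi=0$ on $\partial B_{R_{0}}$, with $\max\psi>\eta$ and $\Delta\psi+f(\psi)\geq 0$ in $B_{R_{0}}$; such a $\psi$ is obtained as a truncated principal eigenfunction, using that $f$ vanishes on $[0,\theta]$ so that the inequality is automatically satisfied on the region where $\psi\leq\theta$, and is obtained on the region $\{\psi>\theta\}$ by taking $R_{0}$ large enough, since $-\Delta$ on $B_{R}$ with Dirichlet condition has principal eigenvalue $\lambda_{1}(B_{R})\to 0$ as $R\to+\infty$ while $f(s)/(s-\text{something})$ stays bounded below on the relevant range. (This is exactly the content of the lemma in \cite{DR} that the authors refer to, specialized to a fixed $c=0$.)

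The new point here is to absorb the drift. I would look for $\phi$ of the form $\phi(x,y)=e^{-\mu x}\,\chi(x,y)$, or more simply first treat the drift by a gauge change: if $\phi$ solves $\Delta\phi+c\partial_{x}\phi+f(\phi)\ge 0$ and we set $\phi=e^{-\frac{c}{2}x}\,w$, then one computes
\[
\Delta\phi+c\partial_{x}\phi = e^{-\frac{c}{2}x}\Big(\Delta w-\tfrac{c^{2}}{4}w\Big),
\]
so the inequality becomes $\Delta w-\frac{c^{2}}{4}w+e^{\frac{c}{2}x}f(e^{-\frac{c}{2}x}w)\geq 0$ on the support. Since $f\geq 0$ everywhere, the last term is nonnegative, so it suffices to have $\Delta w-\frac{c^{2}}{4}w\geq 0$... but that forces $w$ subharmonic-like and cannot be compactly supported and positive. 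Hence this crude gauge change is too lossy, and instead I would keep the reaction term: choose $R$ large (to be fixed), let $w$ be the truncation at $0$ of the principal Dirichlet eigenfunction of $-\Delta$ on $B_{R}$, so $\Delta w=-\lambda_{1}(B_{R})w$ on $\{w>0\}$, and then we need
\[
-\lambda_{1}(B_{R})-\tfrac{c^{2}}{4}+\frac{e^{\frac{c}{2}x}f(e^{-\frac{c}{2}x}w)}{w}\geq 0 \quad\text{on } B_{R}\cap\{w>0\}.
\]
On the part where $e^{-\frac{c}{2}x}w\le\theta$ the fraction is $0$ and the inequality reads $\lambda_{1}(B_{R})+c^{2}/4\le 0$, which is false; so one really must exploit that the \emph{positive} part of $f$ kicks in somewhere. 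The resolution, and the heart of the argument, is to \emph{also choose $c$ small} (the statement only asserts existence of \emph{some} $c>0$, $R>0$): given the $\psi$ from \cite{DR}/\cite{AW} with $c=0$ and with strict inequality $\Delta\psi+f(\psi)\ge\delta>0$ on the compact set $\{\psi\ge\theta\}$ (strictness can be arranged by taking $R_{0}$ a touch larger than critical), the perturbation $c\partial_{x}\psi$ is bounded by $c\|\partial_{x}\psi\|_{\infty}$, so for $c$ small enough $\Delta\psi+c\partial_{x}\psi+f(\psi)\ge \delta - c\|\partial_x\psi\|_\infty\ge 0$ on $\{\psi\ge\theta\}$; and on $\{\psi<\theta\}$ we have $f(\psi)=0$ while $\psi$ is, near $\partial B_{R_0}$, comparable to the (smooth, negative-Laplacian) eigenfunction, so $\Delta\psi$ there has a definite sign that we must control. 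This last region is the only delicate spot.

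\textbf{Main obstacle.} The real difficulty is the region where $\theta<\text{(truncation threshold)}$ is crossed, i.e. near $\partial B_{R}$ where $\phi$ is small and $f(\phi)$ contributes nothing: there $\Delta\phi+c\partial_x\phi$ must itself be $\ge 0$, which for the eigenfunction-truncation is false ($\Delta\psi<0$ there). The standard fix, which I would carry out, is to not truncate the eigenfunction at its first zero but to glue: take $\psi$ equal to (a multiple of) the principal eigenfunction on a slightly smaller ball $B_{R'}$ where it still exceeds $\theta$ on a large sub-ball, and outside $B_{R'}$ replace it by $0$, checking that $\psi\in W^{2,\infty}$ requires the gluing to be done where $\psi$ and $\nabla\psi$ vanish simultaneously — which is automatic if we instead use, as in \cite{DR}, the function $\psi=(\varphi_{1}-\theta/2)_{+}$ shifted so that it is $\equiv$ a positive constant $>\theta$ on a large ball and decays; then on the decaying annulus $f(\psi)\ge f_{\min}(\theta/2,\ldots)$... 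Precisely, following \cite{DR}: pick $R$ with $\lambda_{1}(B_{R})$ so small that $\lambda_{1}(B_{R})\max\varphi_{1}<\min_{[\theta,1)}f$ suitably normalized, set $\phi:=\big(\varphi_{1}-c_{0}\big)_{+}$ for an appropriate constant $c_{0}$ so that $\{\phi>0\}\Subset B_{R}$ and on $\{\phi>0\}$ one has $\Delta\phi=-\lambda_{1}\varphi_{1}\ge-\lambda_{1}\max\varphi_{1}\ge-\min_{[0,1)}f\cdot(\text{const})$, then add back $f(\phi)\ge 0$ and finally absorb $c\partial_x\phi$ by shrinking $c$; the Lipschitz truncation keeps $\phi\in W^{2,\infty}$ because $\varphi_{1}$ is smooth and the set $\{\varphi_1=c_0\}$ is a regular level set. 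I expect the bookkeeping of constants ($R$ vs.\ $c$ vs.\ the size of $f$ on $[\theta,1)$) to be the only nontrivial part, and it is essentially identical to the proof of the analogous statement in \cite{DR}, to which a reader can be referred for the routine estimates.
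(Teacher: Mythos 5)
Your plan breaks down precisely at the point you yourself call the main obstacle, and the fix you sketch does not repair it. For a combustion nonlinearity the support of $\phi$ necessarily contains an outer annulus where $0<\phi<\theta$, hence $f(\phi)=0$ and the required inequality there is $\Delta\phi+c\,\partial_{x}\phi\geq 0$ with no help from the reaction. For your candidate $\phi=(\varphi_{1}-c_{0})_{+}$ one has $\Delta\phi=-\lambda_{1}\varphi_{1}<0$ on that annulus, while for a radial nonincreasing profile the drift term $c\,\partial_{x}\phi=c\,h'(r)\,x/r$ is $\leq 0$ on the half $\{x>0\}$; so the inequality fails on an open set for every $c>0$, no matter how small $c$ or $\lambda_{1}(B_{R})$ are. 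The same objection invalidates your premise that a truncated principal eigenfunction is already a $c=0$ subsolution for combustion $f$: where $\psi\leq\theta$ the inequality is not ``automatically satisfied'', it requires $\Delta\psi\geq 0$, which is exactly what fails. Even a correct $c=0$ construction whose tail is harmonic below $\theta$ (in dimension $2$, $h(r)=B-A\log r$, so $\Delta\psi=0$ there) cannot be upgraded to $c>0$ by ``taking $c$ small'', because in the tail the $c=0$ inequality is an equality and the drift breaks it on half the ball. In short, smallness of $c$ cannot absorb the drift in the region where $f$ vanishes; the drift has to be built into the decay of the profile itself, and this is the idea missing from your proposal (your gauge-change paragraph brushes against it but is then abandoned).

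The paper's proof does exactly this with an explicit piecewise radial profile $\phi(x)=h(|x|)$: $h\equiv C\in(\theta,1)$ on $[0,R_{1}]$; a downward parabola on $[R_{1},R_{2}]$ kept inside $[K,C]\subset(\theta,1)$, so that $F:=\inf_{(K,C)}f>0$ compensates the negative second derivative; and on the outer annulus $[R_{2},R_{3}]$ the exponential $h(r)=\beta\bigl(e^{-\tilde c(r-R_{3})}-1\bigr)$ with $\tilde c=c+\tfrac{1}{R_{1}}$, for which $h''+\tilde c\,h'\equiv 0$ identically. Since $h'\leq 0$, $h'=0$ on $[0,R_{1}]$ and $1/r\leq 1/R_{1}$ where $h'\neq 0$, one gets $\Delta\phi+c\,\partial_{x}\phi+f(\phi)\geq h''+\tilde c\,h'+f(h)$ even in the worst direction $x/r=1$, and the $C^{1}$ matching plus the inequality reduce to the algebraic system \eqref{algebraic}, which is solved explicitly (this also produces the value of $c$, rather than fixing it small a posteriori). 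To salvage your approach you would have to replace the eigenfunction (or harmonic) tail by such a drift-adapted exponential tail and redo the matching; as written, the proposal has a genuine gap at the tail region.
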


\begin{proof}

We build $\phi$ to be radial. We set $\phi(x) := h(\vert x \vert)$. Now, take $R_1, R_2 , R_3 >0$ to be chosen after, such that $R_1 < R_2 < R_3$. We denote  $\tilde{c} := c +\frac{1}{R_1}$. Let $C \in (\theta ,1)$, and $\alpha, \beta >0$. We define $h$ as follows:

\begin{equation}
\left\{
\begin{array}{lllll}
h(r) &=& C,& \quad &r \in [ 0, R_1 ],  \\
h(r) &=& -\frac{\alpha}{2}(r-R_1)^{2}+C,& \quad &r \in [ R_1, R_2 ],  \\
h(r) &=&\beta(e^{-\tilde{c}(r-R_3)}-1),& \quad &r \in [ R_2, R_3 ].
\end{array}
\right.
\end{equation}
Let us see that we can choose $R_1, R_2 , R_3 , c,\alpha, \beta , C $ such that 
\begin{equation}\label{conditions h}
\left\{
\begin{array}{lllrr}
h \in W^{2,\infty}(\mathbb{R}^{+}), \\
h(R_2) = K , \ \text{ where } K \in (\theta , C) \text{ will be chosen after}, \\
h^{\prime \prime}(r) + \tilde{c}h^{\prime}(r) +f(h(r))\geq0, \ \text{ for } r \geq 0.
\end{array}
\right.
\end{equation}

The existence of such a function proves our result, indeed  
\begin{equation*}
\begin{array}{lll}
\Delta \phi + c\partial_{x}\phi +f(\phi) &\geq h^{\prime\prime} + (c+\frac{1}{r})h^{\prime} + f(h) \\
&\geq  h^{\prime\prime} + (c+\frac{1}{R_1})h^{\prime} + f(h).
\end{array}
\end{equation*}
We used the fact that $h$ is non-increasing and $h^{\prime}(r)=0$ if $r\in [0,R_1]$ here.

Let us define
$$
F := \inf_{s\in (K,C)} f(s) >0.
$$
Because $h(R_2) =K$, we can bound from behind $f(h(r))$ by $F$ when $r \in [R_1,R_2]$ and by $0$ elsewhere. Some easy computations show that \eqref{conditions h} boils down to verify the following algebraic system:

\begin{equation}\label{algebraic}
\left\{
\begin{array}{llcc}
\beta(e^{\tilde{c}(R_3-R_2)}-1) = K, \\
\frac{\alpha}{2}(R_2 - R_1)^{2} = C-K, \\
\alpha(R_2 - R_1) = \beta \tilde{c}e^{\tilde{c}(R_3-R_2)}, \\
F \geq \alpha (1+\tilde{c}(R_2 - R_1)).
\end{array}
\right.
\end{equation}

Up to some computations, it is easy to see that \eqref{algebraic} admits positive solutions, for instance:

\begin{equation*}
\left\{
\begin{array}{llcc}
\alpha = \frac{F}{1+\frac{C-K}{2K}},   \\
c = \frac{1}{8}\frac{\sqrt{2\alpha(C-K)}}{K},\\
\beta = \frac{\sqrt{\alpha(C-K)}}{2c\sqrt{2}}-K,  \\
R_1 = \frac{1}{c}, \\
R_2 = \sqrt{\frac{2(C-K)}{\alpha}}+R_1,  \\
R_3 = \frac{1}{2c} \ln(1+\frac{K}{\beta})+R_2.
\end{array}
\right.
\end{equation*}
Hence, $\phi(x) := h(\vert x\vert)$ satisfies the lemma with $R := R_3$.

\end{proof}
Now, we use this technical lemma to prove Proposition \ref{dim2}. 

\begin{proof}[Proof of Proposition \ref{dim2}]

\emph{Step 1: Construction of the domain.} \\
Let $R>0$ be large enough, so that we can apply Lemma \ref{tech}. We build a family of $3R$-periodic domains, let $\alpha \in (0,1)$, $\varepsilon \in [0, \frac{\alpha R}{2}]$ and define
$$
\tilde{K}_{\alpha}^{\varepsilon} := \left\{ (x,y) \in \mathbb{R}^{2}\ \text{ such that } \ \alpha x + R +\varepsilon \leq y \leq \alpha x + (1+\alpha)R -\varepsilon, \ y \in [R , 2R]   \right\} .
$$
Now, let $K_{\alpha}$ be a smooth connected compact set such that
$$
\tilde{K}_{\alpha}^{\frac{\alpha R}{4}} \subset K_{\alpha} \subset   \tilde{K}_{\alpha}^{0}.
$$
We define $\Omega_{\alpha}$ to be a smooth $3R$-periodic domain as follows:
$$
\Omega_{\alpha}^{c} := \bigcup_{k \in \mathbb{Z}^{2}} (K_{\alpha}  +3R k ).
$$
Observe that, if $k ,l \in \mathbb{Z}^{2}$ are such that $k \neq l$, then $(K_{\alpha}  +3R k )\cap (K_{\alpha}  +3R l ) = \emptyset$.
\\
\\
\\
\includegraphics[scale = 0.7]{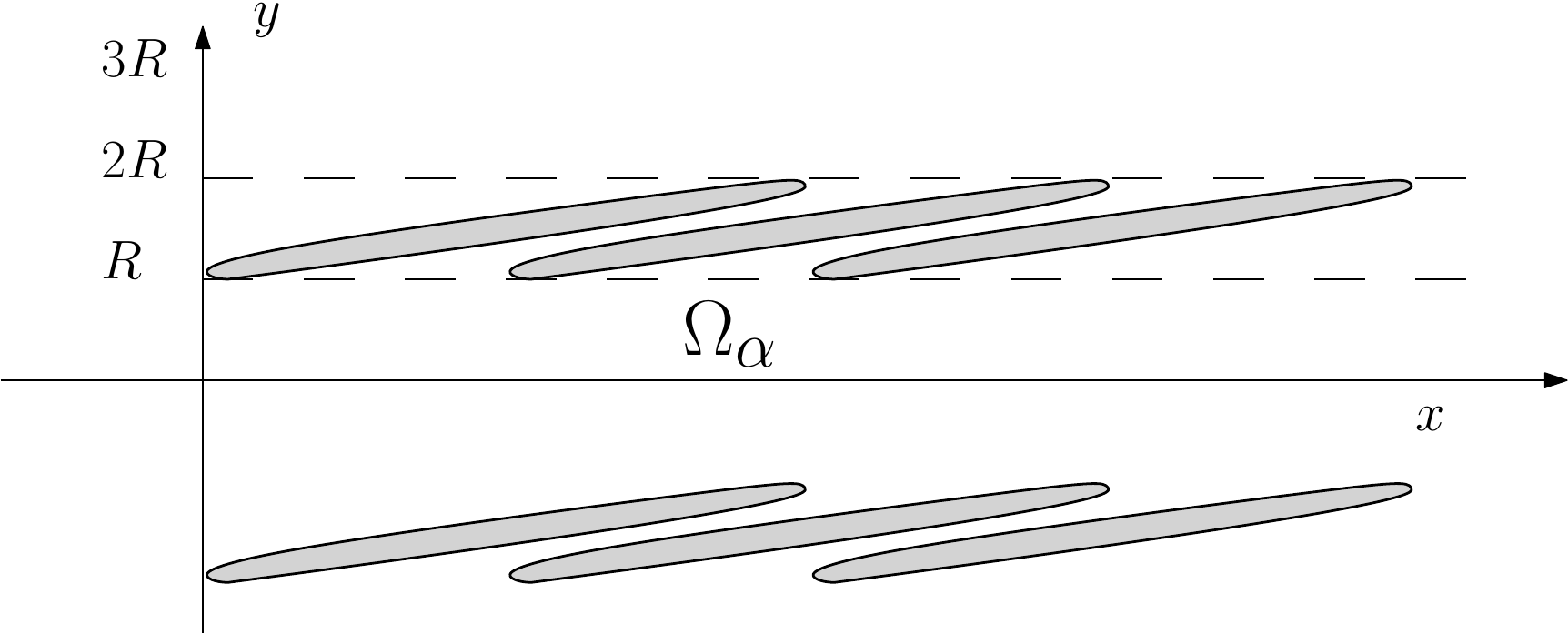}
\\
\\
\emph{Step 2 : Lower bound on $w_{\alpha}(e_{x})$.} \\
For $\alpha>0$ given, we can consider the invasion speed $w_{\alpha}(e_x)$ for \eqref{PeriodicFKPP} set on the smooth periodic domain $\Omega_{\alpha}$ (thanks to Theorem \ref{FG}). Let us show that there is $K>0$ independent of $\alpha$ such that $w_{\alpha}(e_{x}) \geq K$.

Because of the choice of $R$, we can apply Lemma \ref{tech} to find $c>0$ and $\phi \in W^{2,\infty}(B_{R})$, $\phi >0$ on $B_{R}$ and $\phi = 0$ on $\partial B_{R}$ such that $\Delta \phi + c\partial_{x}\phi +f(\phi) \geq 0$. Now, we define

\begin{equation*}
v(t,x,y) := 
\left\{
\begin{array}{lll}
\phi(x-ct,y) &\text{ if }  (x,y) \in B_{R}(cte_x),  \\
0 &\text{ elsewhere. }
\end{array}
\right.
\end{equation*}
Then, the support of $v(t,\cdot,\cdot)$ never intersects the boundary of $\Omega_{\alpha}$, and because 
$$
\partial_{t}v - \Delta v -f(v) = -c\partial_{x}\phi - \Delta\phi -f(\phi) \leq 0 \text{ for } (x,y) \in \text{supp}\, v(t,\cdot, \cdot),
$$
we have that $v$ is a non-negative compactly supported generalized subsolution of \eqref{PeriodicFKPP}.

Now, take $u_{0}$ a compactly supported initial datum such that $u_{0}(x,y) \geq \phi(x,y)$ and such that the solution arising from it, name it $u(t,x,y)$, converges to $1$ (as we mentioned earlier, such initial datum always exists, see \cite{DR}). The parabolic comparison principle yields
$$
u(t,x,y) \geq \phi(x-ct,y), \quad \forall t \geq 0, \ \forall(x,y) \in \Omega_{\alpha}.
$$
By definition of $w_{\alpha}(e_{x})$, this implies that $w_{\alpha}(e_{x}) \geq c$, where $c$ is given by Lemma \ref{tech} and is independent of $\alpha$. This concludes this step with $K := c$.

\emph{Step 3. Upper bound bound on $w_{\alpha}(e_{y})$.} \\
We now show that $w_{\alpha}(e_{y}) \to 0 $ as $\alpha$ goes to $0$. To do so, we first apply Proposition \ref{geo}, to get
$$
w_{\alpha}(e_{y}) \leq 2C_{\Omega_{\alpha}}(e_{y})\sqrt{\max_{u\in [0,1]}\frac{f(u)}{u}}.
$$
Let us estimate $C_{\Omega_{\alpha}}(e_{y})$. If we take $n \in \mathbb{N}$, we see that, if $\alpha$ is small enough, $d_{\Omega_{\alpha}}(0,4Rne_{y} ) \geq 2Rn \sqrt{1+\left(1-\frac{1}{\alpha}\right)^{2}}$. If $\alpha$ is small enough, $C_{\Omega_{\alpha}}(e_{y})\leq 3\alpha$. Then
$$
w_{\alpha}(e_{y}) \leq 6\alpha\sqrt{\max_{u\in [0,1]}\frac{f(u)}{u}} \underset{\alpha \to 0}{\longrightarrow} 0,
$$
hence the result.

\end{proof}

Now, Proposition \ref{dim2} is proved, and answers Question \ref{question} in the combustion case : in $\Omega_{\alpha}$, $c^{\star} \not\equiv w$, for $\alpha>0$ small enough.

Let us now explain how this also answers Question \ref{question} in the monostable case. Take $f$ to be monostable nonlinearity and let $\underline{f}$ be a combustion nonlinearity and let $\overline{f}$ be a KPP nonlinearity, both independent of $x$, such that 
$$
\underline{f} \leq f \leq \overline{f}.
$$
Let $\overline{w}_{\alpha}$, $w_{\alpha}$, $\underline{w}_{\alpha}$ be the invasion speed for the problem \eqref{PeriodicFKPP} with nonlinearity $\overline{f}, \ f, \ \underline{f}$ respectively. Then, by comparison, we have 
\begin{equation}\label{encadrement}
\forall e \in \mathbb{S}^{1}, \quad \underline{w}_{\alpha}(e) \leq w_{\alpha}(e) \leq \overline{w}_{\alpha}(e).
\end{equation}
Now, we can apply Proposition \ref{tech} to find $c>0$ and $\phi \in W^{2,\infty}(\mathbb{R}^{2})$,  $\phi>0$ in $B_{R}$ and $\phi =0$ on $\partial B_{R}$ such that, on $B_{R}$ we have:
$$
\Delta \phi + c\partial_{x}\phi +\underline{f}(\phi) \geq 0.
$$ 
Then, consider the domain $\Omega_{\alpha}$ built in the proof of Proposition \ref{dim2}, but with this new $R>0$.

On this domain, we have a lower bound on $\underline{w}_{\alpha}(e_{x})$ independent of $\alpha$. Moreover, we can show that $\overline{w}_{\alpha}(e_{y})$ goes to zero as $\alpha$ goes to $0$, as in the proof of Proposition \ref{dim2}.

Hence, \eqref{encadrement} yields that there is $K>0$ independent of $\alpha$ such that $w_{\alpha}(e_{x}) \geq K$, and $w_{\alpha}(e_{y}) \to 0 $ as $\alpha$ goes to $0$. This means that Proposition \ref{dim2} still holds if $f$ is monostable, hence this answers Question \ref{question} in the monostable case and concludes the proof of Theorem \ref{thwc}.

\subsection{Symmetries of the domain and relation with $c^{\star}$ and $w$}\label{equality}

This section is dedicated to the proof of Theorem \ref{symmetry}. As we mentioned earlier, even in a domain $\Omega$ where $c^{\star} \not\equiv w$, the Freidlin-Gartner formula yields that any direction $e\in \mathbb{S}^{N-1}$ minimizing $c^{\star}$ satisfies the equality $c^{\star}(e) = w(e)$. Theorem \ref{symmetry} gives a geometrical condition that ensure the existence of directions where $c^{\star}$ and $w$ coincide. To prove it, we first start to state the following lemma:

\begin{lemma}\label{tech sym}
Let $c^{\star}$ and $w$ be respectively the critical speed of fronts and the speed of invasion for \eqref{PeriodicFKPP} with the nonlinearity $f$ satisfying \eqref{regularity}, \eqref{hypf} and such that $u \mapsto \frac{f(u)}{u}$ is non-increasing. For any $k \in \mathbb{N}$ and $e \in \mathbb{S}^{N-1}, \ (\xi_{i})_{i \in \llbracket 1 , k \rrbracket }~\in~(\mathbb{S}^{N-1})^{k}$ such that 
$$
e \in \left\{ x \in \mathbb{R}^{N} \ :  \ x = \sum_{i=1}^{k} \lambda_{i}\xi_{i}, \ \lambda_{i}\geq 0  \right\},
$$
the following holds:

$$
c^{\star}(e) \leq \max_{i \in \llbracket 1 , k \rrbracket }  \frac{c^{\star}(\xi_{i})}{e\cdot \xi_{i} } .
$$

\end{lemma}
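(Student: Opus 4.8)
The plan is to pass to the eigenvalue formulation of the minimal front speed. Since $u\mapsto f(u)/u$ is non‑increasing and not identically zero, $f'(0)>0$ and $f$ is a KPP nonlinearity, so (as recalled in the introduction, following \cite{BHN1}) for every $\zeta\in\mathbb{S}^{N-1}$ one has
\[
c^\star(\zeta)=\min_{\lambda>0}\frac{k(\lambda\zeta)}{\lambda},
\]
where $k(\mu)$ denotes the periodic principal eigenvalue of $L_\mu\varphi:=\Delta\varphi-2\mu\cdot\nabla\varphi+(|\mu|^{2}+f'(0))\varphi$ in $\Omega$, supplemented with the boundary condition $\nu\cdot\nabla\varphi=(\nu\cdot\mu)\varphi$ on $\partial\Omega$. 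I would first record that we may assume $e\cdot\xi_i>0$ for every $i$: the terms $\frac{c^\star(\xi_i)}{e\cdot\xi_i}$ with $e\cdot\xi_i\le 0$ are non‑positive (or $+\infty$, in which case the claim is trivial) and do not affect the maximum on the right, while $1=|e|^{2}=\sum_i\lambda_i(e\cdot\xi_i)$ guarantees $e\cdot\xi_{i}>0$ for at least one $i$, so the right‑hand side is positive; this is also the only configuration needed for Theorem \ref{symmetry}. Fixing then a representation $e=\sum_i c_i\xi_i$ with $c_i\ge 0$, note that $\sum_i c_i(e\cdot\xi_i)=1$.

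The heart of the argument is to build, out of the $\xi_i$‑fronts, a positive periodic super‑eigenfunction for $L_{\beta e}$ with a controlled eigenvalue. For each $i$ I choose $\sigma_i>0$ attaining $c^\star(\xi_i)=k(\sigma_i\xi_i)/\sigma_i$ (the minimum is attained because $\lambda\mapsto k(\lambda\xi_i)/\lambda$ tends to $+\infty$ both as $\lambda\to 0^{+}$, since $k(0)=f'(0)>0$, and as $\lambda\to+\infty$), with corresponding periodic eigenfunction $\psi_i>0$; then $w_i(t,x):=e^{-\sigma_i(x\cdot\xi_i-c^\star(\xi_i)t)}\psi_i(x)$ solves the linearised problem $\partial_t w_i=\Delta w_i+f'(0)w_i$ in $\Omega$ with $\nu\cdot\nabla w_i=0$ on $\partial\Omega$. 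Setting $\theta_i:=\dfrac{c_i/\sigma_i}{\sum_j c_j/\sigma_j}\in[0,1]$ and $\beta:=\big(\sum_j c_j/\sigma_j\big)^{-1}>0$, so that $\sum_i\theta_i=1$ and $\sum_i\theta_i\sigma_i\xi_i=\beta e$, I consider the geometric mean $W:=\prod_i w_i^{\theta_i}$. Using $\Delta w_i/w_i=\Delta\log w_i+|\nabla\log w_i|^{2}$ together with the convexity of $v\mapsto|v|^{2}$, namely $\big|\sum_i\theta_i\nabla\log w_i\big|^{2}\le\sum_i\theta_i|\nabla\log w_i|^{2}$, a direct computation yields $\partial_t W-\Delta W-f'(0)W\ge 0$ in $\Omega$ and $\nu\cdot\nabla W=0$ on $\partial\Omega$. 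Since $W(t,x)=e^{-\beta e\cdot x+at}\Psi(x)$ with $\Psi:=\prod_i\psi_i^{\theta_i}>0$ periodic and $a:=\sum_i\theta_i\sigma_i c^\star(\xi_i)=\beta\sum_i c_i c^\star(\xi_i)$, reading off the equation for $\Psi$ from this supersolution inequality gives $L_{\beta e}\Psi\le a\Psi$ in $\Omega$ and $\nu\cdot\nabla\Psi=\beta(\nu\cdot e)\Psi$ on $\partial\Omega$. By the variational (Collatz–Wielandt) characterisation of the principal periodic eigenvalue, this forces $k(\beta e)\le a$, whence
\[
c^\star(e)=\min_{\lambda>0}\frac{k(\lambda e)}{\lambda}\le\frac{k(\beta e)}{\beta}\le\frac{a}{\beta}=\sum_i c_i c^\star(\xi_i).
\]

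It then remains to bound $\sum_i c_i c^\star(\xi_i)$, which is immediate: $\sum_i c_i c^\star(\xi_i)\le\big(\max_i\tfrac{c^\star(\xi_i)}{e\cdot\xi_i}\big)\sum_i c_i(e\cdot\xi_i)=\max_i\tfrac{c^\star(\xi_i)}{e\cdot\xi_i}$, which is the claim. (Equivalently, the computation above is just the convexity of $\mu\mapsto k(\mu)$ in disguise: writing $\gamma:=\max_i\tfrac{c^\star(\xi_i)}{e\cdot\xi_i}$ and $\mu_i:=\sigma_i\xi_i$, one has $k(\mu_i)-\gamma(e\cdot\mu_i)\le 0$ for all $i$, and $\beta e$ is a convex combination of the $\mu_i$, so $k(\beta e)-\gamma\beta\le 0$, i.e.\ $c^\star(e)\le\gamma$.) The main obstacle I anticipate is the supersolution computation for $W$ — in particular the careful bookkeeping of the $\mu$‑dependent boundary conditions — and checking that, on a periodic domain rather than on $\mathbb{R}^{N}$, one still has both the formula $c^\star(\zeta)=\min_{\lambda>0}k(\lambda\zeta)/\lambda$ and the convexity/Collatz–Wielandt property of $k$; both follow from the periodic eigenvalue theory of \cite{BH1,BHN1}, and I would invoke them as known.
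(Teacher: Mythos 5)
Your proof is correct in substance, and it takes a genuinely different route from the paper's. Where the paper works directly with the \emph{nonlinear} pulsating fronts $\phi_{\xi_i}$ — forming the \emph{sum} $v=\sum_i\phi_{\xi_i}$, showing it is a supersolution via the subadditivity $f(v)\le\sum_i f(\phi_{\xi_i})$ (which is what the hypothesis $u\mapsto f(u)/u$ non--increasing buys in their argument), then comparing $v$ against a pulsating front $\phi_e^{\varepsilon}$ for an approximating combustion nonlinearity $f_\varepsilon$ and letting $\varepsilon\to 0$ — you instead linearize at $u=0$, take a \emph{geometric mean} $W=\prod_i w_i^{\theta_i}$ of plane--wave--times--eigenfunction solutions of $\partial_t w=\Delta w+f'(0)w$, show it is a supersolution via convexity of $v\mapsto|v|^2$, and read off $k(\beta e)\le a$ from the Collatz--Wielandt characterization. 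As you note yourself, that is precisely the convexity of $\mu\mapsto k(\mu)$, which is available for these periodic oblique eigenvalue problems by \cite{BH1,BHN1}. What each approach buys: yours is shorter and reduces the lemma to one known convexity fact, once the KPP identity $c^\star(\zeta)=\min_{\lambda>0}k(\lambda\zeta)/\lambda$ is available (and it is, because $u\mapsto f(u)/u$ non--increasing and $f$ monostable give the KPP inequality $f(u)\le f'(0)u$). The paper's argument avoids the eigenvalue formula entirely and works purely with pulsating fronts and the parabolic comparison principle, so it is more self--contained within the framework the paper has already set up; the price is the approximation by $f_\varepsilon$ and the limiting step $c^\star_\varepsilon(e)\to c^\star(e)$.

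Two small cautions. First, your opening reduction ``we may assume $e\cdot\xi_i>0$ for all $i$'' is not quite justified as written: discarding the $\xi_i$ with $e\cdot\xi_i\le 0$ may destroy the positive--cone representation of $e$, so you cannot simply apply the statement to the reduced family. This is a gap the paper's proof silently shares (its final step, that $v(t,\overline{c}\,te)\to0$, also needs $e\cdot\xi_i>0$ for each $i$ appearing in the sum), and in the only place the lemma is invoked --- Theorem \ref{symmetry}, with $k=2$ and $\xi_2$ a rotation/reflection of $\xi_1$ fixing $e$ --- one does have $e\cdot\xi_i>0$. So it is a shared implicit hypothesis rather than a defect specific to your argument, but it is worth making explicit rather than disposing of via the ``non--positive terms do not affect the max'' remark, which does not address the representability issue. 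Second, the inference $f'(0)>0$ needs the monostability of $f$, not just $u\mapsto f(u)/u$ non--increasing; this is implicit in the lemma (the objects $c^\star$ and $w$ are only defined in the monostable/combustion setting) but you should say so.
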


\begin{proof}[ Proof of Lemma \ref{tech sym}]
For $i\in \llbracket 1 , k \rrbracket$, we denote $ \phi_{\xi_{i}}(t,x)$ a pulsating traveling fronts solution of \eqref{eqgen} in the direction $ \xi_{i}$  with critical speed $ c^{\star}(\xi_{i})$ respectively. Denote 
$$
v(t,x) :=  \sum_{i = 1}^{k}\phi_{\xi_{i}}(t,x).
$$
Now, the hypotheses on $f$ imply that $f(v) \leq \sum_{i = 1}^{k}f(\phi_{\xi_{i}})$, and then $v$ is a supersolution of \eqref{eqgen}.

Now, for $\varepsilon>0$, let $f_{\varepsilon}$ be a combustion nonlinearity satisfying the following:
\begin{equation*}
\left\{
\begin{array}{llc}
0\leq f_{\varepsilon}(x,u) \leq f(x,u),    \quad &\forall u \in [0,1], \ \forall x\in \Omega,      \\
f_{\varepsilon}(x,u) = f(x,u), \quad &\forall u \in [0,1-2\varepsilon], \ \forall x\in \Omega,  \\
f_{\varepsilon}(x,u) =0, \quad &\forall u \in [-\varepsilon , 0], \ \forall x \in \Omega, \\
f_{\varepsilon}(x,1-\varepsilon)  = 0 , \quad &\forall x \in \Omega.

\end{array}
\right.
\end{equation*}
Let $\phi_{e}^{\varepsilon}$ be a pulsating traveling fronts connecting $1-\varepsilon$ to $-\varepsilon$, solution of \eqref{eqgen} with the combustion nonlinearity $f_{\varepsilon}$, in the direction $e$ with critical speed $c^{\star}_{\varepsilon}(e)$. 

Up to some translation in time, we can assume that $\phi_{e}^{\varepsilon}(0,x)<0$ if $x\cdot e >0$ and $\forall i \in \llbracket 1, k \rrbracket$, \ $\phi_{\xi_{i}}(0,x) \geq 1-\varepsilon$ if $x\cdot \xi_{i} <0$. 
Thanks to the hypotheses, we can write $e = \sum_{i=1}^{k} \lambda_{i}\xi_{i}$, with $\lambda_{i}\geq 0, \ \forall i \in \llbracket 1,k \rrbracket $. Hence, if $x\in \Omega$ is such that  $x\cdot e <0$, then there is at least one of the $\xi_{i}$ such that $x\cdot \xi_{i}<0$. Hence, $v(0,x) > 1-\varepsilon$ if $x\cdot e <0$. If $x\cdot e \geq0$, we have $v(0,x) >0\geq \phi_{e}^{\varepsilon}(0,x)$. Hence

$$
v(0,x) \geq \phi_{e}^{\varepsilon}(0,x), \quad  \forall x\in \Omega.
$$ 
Because $f_{\varepsilon} \leq f$, the parabolic comparison principle yields
\begin{equation}\label{v phi}
v(t,x) \geq \phi_{e}^{\varepsilon}(t,x) , \quad \forall t \geq 0,\ \forall x \in \Omega.
\end{equation}
Now, if we take $\overline{c}>\max_{i \in \llbracket 1 , N \rrbracket }  \frac{c^{\star}(\xi_{i})}{e\cdot \xi_{i} }$, we have that $v(t,\overline{c}te) \to 0$ as $t$ goes to $+\infty$. It then follows from \eqref{v phi} that $c_{\varepsilon}^{\star}(e) \leq \max_{i \in \llbracket 1 , N \rrbracket }  \frac{c^{\star}(\xi_{i})}{e\cdot \xi_{i} }$. 
Now, it is classical that  $c^{\star}_{\varepsilon}(e) \to c^{\star}(e)$ as $\varepsilon$ goes to $0$ (see, for exemple, \cite[Proposition 2.6]{R1}). Taking the limit $\varepsilon \to 0$  then yields the result.

\end{proof}

\begin{req}
Lemma \ref{tech sym} yields a very strong geometrical condition on $c^{\star}$, and prevents it to be any arbitrary function. Consider 
$$
\mathcal{C} := \left\{  r(\xi) \xi \in \mathbb{R}^{2} \ : \ r(\xi) \in [0,c^{\star}(\xi)]   \right\}.
$$
In the case of equation \eqref{PeriodicFKPP} with $\Omega = \mathbb{R}^{N}$, $c^{\star}$ is constant and then $\mathcal{C}$ is a ball. In general, it is not clear what ``shapes" $\mathcal{C}$ can adopt. Lemma \ref{tech sym} prevents it to be some natural candidates, for instance, $\mathcal{C}$ can not be an ellipse with eccentricity larger than $\frac{1}{\sqrt{2}}$.  We recall that an ellipse of equation $\frac{x^{2}}{a^{2}} + \frac{y^{2}}{b^{2}}=1$, with $a>b$, has eccentricity $\sqrt{1-\frac{b^{2}}{a^{2}}}$. 
\end{req}
Now, we prove Theorem \ref{symmetry}.

\begin{proof}[Proof of Theorem \ref{symmetry}.]

We first consider the case $N \geq 3$. Let $e\in \mathbb{S}^{N-1}$ and $R$ be the rotation of angle $\pi$ around the axis directed by $e$ such that $R\Omega = \Omega$. Take $\xi_{1}$ be such that $w(e) = \frac{c^{\star}(\xi_{1})}{\xi_{1} \cdot e}$, where $\xi_{1}\cdot e >0$. If $\xi_{1}=e$, then $w(e) = c^{\star}(e)$ and we are done. If not, define $\xi_{2} := R\xi_{1}$. Then, $e$ is in the positive cone generated by $\xi_{1}$ and $\xi_{2}$. Because $R\Omega = \Omega$, it is easy to see that $c^{\star}(\xi_{1})=c^{\star}(\xi_{2})$. Indeed, if $\phi(t,x)$ is a pulsating traveling front solution of \eqref{PeriodicFKPP} in the direction $\xi$ with speed $c^{\star}(\xi)$, then $\phi(t,Rx)$ is a pulsating traveling front solution of \eqref{PeriodicFKPP} in the direction $R\xi$ with speed $c^{\star}(\xi)$. Hence, Lemma \ref{tech sym} implies that  
$$
c^{\star}(e) \leq   \frac{c^{\star}(\xi_{1})}{\xi_{1}\cdot e } = w(e) .
$$
Because $w(e) \leq c^{\star}(e)$ (thanks to \eqref{FGformula}), we get
$$
c^{\star}(e) = w(e),
$$
hence the result. If $N=2$, the proof works identically.

\end{proof}

Let us emphasize that, if one consider the general equation \eqref{eqgen}, then Theorem \ref{symmetry} still holds if  we add the following hypothesis on the coefficients: in the case $N\geq 3$,
$$
A(Rx) = RA(x)R^{\star}, \ q(R x) = Rq(x), \ f(R x,\cdot) = f(x,\cdot),
$$
where  $R$ is the rotation of angle $\pi$ around the axis directed by $e$ that satisfies $R\Omega = \Omega$.  In the case $N=2$, we should add:
$$
 A(Sx) = SA(x)S^{\star}, \ q(S x) = Sq(x), \ f(S x,\cdot) = f(x,\cdot),
$$
where  $S$ is the symmetry about the axis directed by $e$ that satisfies $S\Omega = \Omega$.

\section*{Appendix}

\subsection*{Proof of Lemma \ref{lemma1}}

As we mentioned, Lemma \ref{lemma1} is the natural extension of \cite[Lemma 2.1]{R1}, in the case of a periodic domain.

\begin{proof}

Let $u$ be taken as in the lemma. We denote 
$$
h\::=\liminf_{\delta\to+\infty}\inf_{\substack{t<0\\ x \cdot e< \gamma t-\delta \\ x\in\Omega }} u(t,x)
$$

Assume that, by contradiction,  $h\in (S,1)$. We can find two  sequences $(x_{n})_{n}\in\Omega^{\mathbb{N}}$, $(t_{n})_{n}\in (-\infty,0)^{\mathbb{N}}$ such that $x_{n}\cdot e- \gamma t_{n} \to-\infty$ and $u(t_{n},x_{n})\to h$ as $n$ goes to $+\infty$. Let us define $k_{n}\in\mathbb{Z}^{N},\: z_{n}\in[0,1)^{N}$ so that $x_{n}=k_{n}+z_{n}$. Up to extraction, we assume that $z_{n}\to z$ as $n$ goes to $+\infty$, for some $z\in [0,1]^{N}$. Consider the sequence of translated functions:
$$
u_{n}:=u(\cdot+t_{n},\cdot+k_{n}).
$$
These functions are supersolutions of \eqref{eqgen}, by periodicity of the domain. As before, we can use the usual parabolic estimates to get local uniform convergence of the sequence $(u_{n})_{n}$ to a function $u_{\infty}$ supersolution of \eqref{eqgen}. Moreover, we have 
\begin{equation}\label{dem lemme 1}
u_{\infty}(0,z)=h\leq u_{\infty}(t,x), \quad \forall t\leq0 ,\ \forall x \in \Omega.
\end{equation}
Indeed, for $t\leq0,  x \in \Omega$, we have
\begin{equation}\label{tech n}
u_{n}(t,x) = u(t+t_{n},x+k_{n}) \geq \inf_{\substack{\tau <0   \\  y\cdot e -\gamma \tau \leq \tilde{\delta}_{n}}}u(\tau , y),
\end{equation}
where $\tilde{\delta}_{n} := x\cdot e - \gamma t -z_{n}\cdot e + x_{n}\cdot e -\gamma t_{n}$ goes to $-\infty$ as $n$ goes to $+\infty$. Hence, passing to the limit $n\to + \infty$ in \eqref{tech n} yields \eqref{dem lemme 1}. Because $f\geq 0$, it follows from the parabolic maximum principle and Hopf principle that $u_{\infty}$ is actually equal to $h$ if $ t \leq 0$ and $x\in \Omega$. This implies that $f(x,h)= 0$, which is in contradiction with the fact that $h \in (S,1)$ together with hypothesis \eqref{hypf}, hence the result.

\end{proof}

\subsection*{Proof of Lemma \ref{lemma2}}
We now turn to the proof of Lemma \ref{lemma2}.  Again, it is the natural extension of  \cite[Lemma 2.2]{R1} to the case of a periodic domain.

\begin{proof}

Let us define $\bar{u}_{\varepsilon} :=\bar{u}+\varepsilon$, where $\varepsilon>0$. The hypotheses on $\overline{u}$ yield that there is $\delta>0$ such that $\overline{u}_{\varepsilon}(t,x)\geq 1 + \frac{\varepsilon}{2}$ if $t<0$ and $x\cdot e < \gamma t -\delta$, $x\in \Omega$. The hypotheses on $\underline{u}$ gives us that there is $L>0$ such that $\underline{u}(t,x) \leq \varepsilon$ if $t<0$ and $x\cdot e \geq (\gamma+\eta)t +L$. Hence, there is $T_{\varepsilon}\leq 0$, such that $\overline{u}_{\varepsilon}(t,x) > \underline{u}(t,x)$ for $t < T_{\varepsilon}$, for all $x\in \Omega$. Indeed, if $t$ is negative enough, we have $\eta t + L < -\delta$, hence we can take $T_{\varepsilon} := \frac{-\delta-L}{\eta}$.

In order to prove the result, we shall argue by contradiction. Hence, we will assume that there is $\varepsilon_{0}>0$ such that:

\begin{equation}\label{lemme tau}
\forall\varepsilon\in(0,\varepsilon_{0}),\  \exists\tau\in(T_{\varepsilon},\:0),\ \exists x_{\tau}\in\Omega\ \ \text{such that} \ \bar{u}_{\varepsilon}(\tau,x_{\tau})<\underline{u}(\tau,x_{\tau}).
\end{equation}
Indeed, if \eqref{lemme tau} does not hold, our result follows by letting $\varepsilon \to 0$. Now, we define $t_{\varepsilon}\in[T_{\varepsilon},0)$ to be the infimum of all the $\tau$ such that \eqref{lemme tau} holds true. Hence
$$
\overline{u}_{\varepsilon}(t,x)\geq\underline{u}(t,x),\ \ \forall t\leq t_{\varepsilon}, \ \forall x \in \Omega,
$$
and by continuity we have
$$
\inf_{x\in\Omega}\big(\bar{u}_{\varepsilon}-\underline{u}\big)(t_{\varepsilon},x)=0.
$$
Thanks to the hypotheses, we can find $\rho_{\varepsilon}\in \mathbb{R}$
such that
$$
\inf_{x\cdot e=\rho_{\varepsilon}}\big(\bar{u}_{\varepsilon}-\underline{u}\big)(t_{\varepsilon},x)=0.
$$
Depending on the behavior of $\rho_{\varepsilon}$, we now consider three cases.

\emph{First case : $(\rho_{\varepsilon})_{\varepsilon\in(0,\varepsilon_{0})}$ is
bounded.} \\ We can find a sequence of points $(x_{\varepsilon})_{\varepsilon\in(0,\varepsilon_{0})}$, with $x_{\epsilon} \in \Omega$ such that 

\[
x_{\varepsilon}\cdot e=\rho_{\varepsilon}\ \text{ and } \ \bar{u}_{\varepsilon}(t_{\varepsilon},x_{\varepsilon})-\underline{u}(t_{\varepsilon},x_{\varepsilon})<\varepsilon.
\]
We define $k_{\varepsilon}\in\mathbb{Z}^{N},\: y_{\varepsilon}\in [0,1)^{N}$ to be such that $x_{\varepsilon}=k_{\varepsilon}+y_{\varepsilon}$ . Up to extraction,we can find $y \in [0,1)^{N}$ such that $y_{\varepsilon}\to y$ as $\varepsilon$ goes to $0$.

We now consider the translated functions $\overline{u}_{\varepsilon}(t+t_{\varepsilon},x+k_{\varepsilon}),\ \underline{u}(t+t_{\varepsilon},x+k_{\varepsilon})$. Using parabolic estimates and extracting, these functions converge locally uniformly as $\varepsilon$ goes to $0$ to $\overline{u}_{\infty},\:\underline{u}_{\infty}$, a supersolution and a subsolution respectively of \eqref{eqgen}.

Moreover, $\overline{u}_{\infty},\:\underline{u}_{\infty}$ satisfy:

$$
\overline{u}_{\infty}(0,y)=\underline{u}_{\infty}(0,y) \ \text{ and } \  \overline{u}_{\infty}(t,x)\geq\underline{u}_{\infty}(t,x)\ \text{ for } \ t\leq 0, \ x\in \Omega.
$$
Hence, the strong comparison principle and Hopf lemma (see \cite[chapter 3]{PW})  imply that $\overline{u}_{\infty}=\underline{u}_{\infty}$ for $t\leq 0$. But the boundedness of $x_{\varepsilon}\cdot e=\rho_{\varepsilon}$ implies that we still have

$$
\liminf_{\delta\to+\infty}\inf_{\substack{t<0\\
x\cdot e<\gamma t\\
x\in\Omega
}
-\delta}\overline{u}_{\infty}(t,x)\geq 1.
$$
However, the hypotheses on $\underline{u}$ yields that there is $K \in \mathbb{R}$ such that
$$
\underline{u}_{\infty}(t,x)\leq \frac{1}{2}, \quad \forall t <0, \ \forall x \in \Omega \text{ such that } x\cdot e \geq (\gamma +\eta)t +K.
$$
Taking $t<0$ small enough yields a contradiction.

\emph{Second case : $\inf_{\varepsilon\in(0,\varepsilon_{0})}\rho_{\varepsilon}=-\infty$.} \\ Let us take $\varepsilon$ be such that $-\rho_{\varepsilon}$ is large
enough to have
$$
\inf_{\substack{\substack{t<0\\
x\cdot e-\gamma t<\rho_{\varepsilon}
}
}
}\overline{u}(t,x)>S.
$$
Because $f(x,\cdot)$ is decreasing in $(S,1)$, we have that $\overline{u}_{\varepsilon} = \overline{u}+\varepsilon$ is supersolution of \eqref{eqgen} for $\{(t,x) \in \mathbb{R}\times \Omega \text{ such that } x\cdot e-\gamma t<\rho_{\varepsilon} \}$.

We can find a sequence $(x_{n})_{n} \in \Omega^{\mathbb{N}}$ such that $x_{n} \cdot e =0$ and
$$
\lim_{n\to+\infty}\big(\overline{u}_{\varepsilon}-\underline{u}\big)(t_{\varepsilon},\rho_{\varepsilon}e+x_{n})=0.
$$
We write as before  $x_{n}=k_{n}+y_{n}$, where $k_{n}\in \mathbb{Z}^{N}$ and $y_{n}\in \left[ 0 , 1\right)^{N}$, and up to extraction we can find $y \in \left[ 0 , 1\right]^{N}$ such that $y_{n}\to y$ as $n$ goes to $+\infty$.

We define $\overline{u}^{\varepsilon}_{n}(t,x) := \overline{u}_{\varepsilon}(t,x+k_{n})$ and $\underline{u}_{n}(t,x) := \underline{u}(t,x+k_{n})$. Observe that $\overline{u}^{\varepsilon}_{n}$ is supersolution in $\{(t,x) \in \mathbb{R}\times \Omega \text{ such that } x\cdot e-\gamma t<\rho_{\varepsilon}-1 \}$. Again, using parabolic estimates and extracting as $n$ goes to $+ \infty$, we get two functions $\overline{u}_{\infty}^{\varepsilon}$ and $\underline{u}_{\infty}$ that are respectively supersolution and subsolution of \eqref{eqgen} on the same set. 
Moreover, they satisfy $\overline{u}_{\infty}^{\varepsilon}(t_{\varepsilon},\rho_{\varepsilon}e+y)=\underline{u}_{\infty}(t_{\varepsilon},\rho_{\varepsilon}e+y)$ : we have a contact point.

Observe that $(t_{\varepsilon},\rho_{\varepsilon}e+y)$ is in $\{(t,x) \in \mathbb{R}\times \Omega \text{ such that } x\cdot e-\gamma t<\rho_{\varepsilon}-2 \}$. Hence, we can apply Hopf lemma (\cite[Theorem 6]{PW}) to the equation \eqref{eqgen} on $\{(t,x) \in \mathbb{R}\times \Omega \text{ such that } x\cdot e-\gamma t<\rho_{\varepsilon}-1 \}$ to get get that $(t_{\varepsilon},\rho_{\varepsilon}e+y)$ is not on a boundary point. Therefore, it is an interior contact point and the parabolic comparison principle yields that $\overline{u}_{\infty}^{\varepsilon}(t,x)=\underline{u}_{\infty}(t,x)$  on $\{(t,x) \in \mathbb{R}\times \Omega \text{ such that } x\cdot e-\gamma t<\rho_{\varepsilon}-1 \}$. But this is not possible, because the hypotheses on $\overline{u}$ imply that there is $\delta$  large enough so that $\overline{u}_{\infty}^{\varepsilon}(t,x)\geq 1+\frac{\varepsilon}{2}$ if $x\cdot e -\gamma t <-\delta$. Because $\underline{u}\leq 1$, we are led to a contradiction.

\emph{Third Case : $\sup_{\varepsilon\in(0,\varepsilon_{0})}\rho_{\varepsilon}=+\infty$.} \\ If we are in the case \eqref{compmono}, this can not happen because $\overline{u}_{\varepsilon}\geq 0$ and $\underline{u}(t_{\varepsilon},x) < 0$ if $x\cdot e$ is large enough.  Then, we are left to assume that $f$ satisfies \eqref{decroissance}
and $\underline{u}$ satisfies \eqref{compcomb}. In particular, we can take $\varepsilon$ small enough so that $\rho_{\varepsilon}$ is large enough to have $\underline{u}(t,x)\leq \theta$ on $\{(t,x) \in \mathbb{R}\times \Omega \text{ such that } x\cdot e-\gamma t>\rho_{\varepsilon} \}$, where $\theta$ is from \eqref{decroissance}. Hence, $\underline{u}_{\varepsilon} := \underline{u}-\varepsilon$ is a subsolution of \eqref{eqgen} on this set. Arguing as in the previous case, we get a contradiction, and hence the result.
\end{proof}

\noindent {\textbf{Acknowledgements:} The research leading to these results has received funding from the European Research Council under the European Union's Seventh Framework Program (FP/2007-2013) / ERC Grant Agreement n.321186 - ReaDi - Reaction-Diffusion Equations, Propagation and Modeling. The author wants to thank Luca Rossi for suggesting this problem and for interesting discussions.}

\end{document}